\newtheorem{thm}{Theorem}[section]
\newtheorem{letteredthm}{Theorem}
\newtheorem{cor}[thm]{Corollary}
\newtheorem*{cor*}{Corollary}
\newtheorem{lem}[thm]{Lemma}
\newtheorem{prop}[thm]{Proposition}
\newtheorem*{ques*}{Question}
\newtheorem{defin}[thm]{Definition}
\newtheorem*{example*}{Example}
\newtheorem*{porism*}{Porism}
\newtheorem*{scholium*}{Scholium}
\newtheorem*{thm*}{Theorem}
\newtheorem*{defin*}{Definition}
\newtheorem*{lem*}{Lemma}
\newtheorem*{prop*}{Proposition}
\newtheorem*{remark*}{Remark}
\def\cA{{\mathcal A}}
\def\cH{{\mathcal H}}
\def\cJ{{\mathcal J}}
\def\cM{{\mathcal M}}
\def\w{{\mathrm{Width}}}
\def\JF{{\rm JF}}
\def\JFC{{\rm JFC}}
\def\CJF{{\rm JFC}}
\def\Jstd{{J_{\rm std}}}
\def\pid{{\pi_{\rm d}}}
\def\wt{\widetilde}
\def\bC{{\mathbb C}}
\def\C{{\mathbb C}}
\def\bF{{\mathbb F}}
\def\cG{{\mathcal G}}
\def\rad{{\mathrm{Rad}}}
\def\M{{\mathcal M}}
\def\bR{{\mathbb R}}
\def\bZ{{\mathbb Z}}
\def\area{{\mathrm{Area}}}
\def\jreg{{\mathcal{J}_{\mathrm{reg}}}}
\def\del{{\partial}}
\def\im{{\textup{im}}}
\newcommand{\ol}{\overline}
\begin{document}
\thispagestyle{empty}
\title[Floer homology and square pegs]{Floer homology and square pegs}
\author{Joshua Evan Greene} 
\address{Department of Mathematics, Boston College, USA}
\email{joshua.greene@bc.edu}
\urladdr{https://sites.google.com/bc.edu/joshua-e-greene}
\author{Andrew Lobb} 
\address{Mathematical Sciences,
	Durham University,
	UK}
\email{andrew.lobb@durham.ac.uk}
\urladdr{http://www.maths.dur.ac.uk/users/andrew.lobb/}
\thanks{JEG was supported by the National Science Foundation under Award No.~DMS-2304856 and by a Simons Fellowship.}

\begin{abstract}
We construct a version of Lagrangian Floer homology whose chain complex is generated by the inscriptions of a rectangle into a real analytic Jordan curve.
By using its associated spectral invariants, we establish that a rectifiable Jordan curve admits inscriptions of a whole interval of rectangles.
In particular, it inscribes a square if the area it encloses is more than half that of a circle of equal diameter.
\end{abstract}

\maketitle

\section{Introduction.}
\label{sec:intro}
This paper is motivated by the Square Peg Problem:
\begin{center}
{\em Does every Jordan curve in the plane inscribe a square?}
\end{center}

\noindent
Here a curve $\gamma$ {\em inscribes} a polygon $Q$ -- and $Q$ {\em inscribes in} $\gamma$ -- if $\gamma$ contains the vertices of an orientation-preserving similar copy of $Q$.
The problem was posed by Otto Toeplitz in 1911 \cite{toeplitz1911}.
It was affirmatively solved for {\em smooth} curves by Schnirelmann in 1929 \cite{schnirelman1929}, and it is known to be true for several other classes of curves \cite{matschke2014}.
However, the general case of {\em continuous} curves remains open to this day.
The difficulty with promoting the solution for smooth curves to continuous curves is the issue of {\em shrinkout}.

\subsection{Shrinkout and symplectic geometry.}
To describe the issue, suppose that $\gamma$ is a (continuous) Jordan curve and $\gamma_n$ is a sequence of smooth Jordan curves approximating it.
Schnirelmann's result shows that each $\gamma_n$ contains the vertices of a square $\square_n$.
It is tempting to use compactness to pass to a subsequence of $\square_n$ and conclude that they limit to a square $\square$ whose vertices are contained in $\gamma$.
However, the subsequence may limit to a single point $\bullet \in \gamma$.
Surmounting shrinkout has impeded progress on the Square Peg Problem for nearly a century.

The goal of this paper is to show how to preclude shrinkout for a wide class of curves using tools from symplectic geometry.
In order to describe the framework, we recall our earlier result specific to the case of smooth curves:
\begin{thm}
[\cite{greenelobb2}]
\label{cqpp}
Every smooth Jordan curve in the plane inscribes every cyclic quadrilateral.  In other words, every quadrilateral which inscribes in a circle also inscribes in every smooth Jordan curve.
\end{thm}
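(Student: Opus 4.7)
The plan is to recast the inscription problem as a Lagrangian intersection problem in $(\mathbb{C}^2, \wst)$ and then use a symplectic obstruction to force the intersection. Fix the cyclic quadrilateral $Q$ with vertices labeled $A,B,C,D$ in cyclic order along its circumscribing circle, and regard $AC$ as a distinguished diagonal. For any pair $(x,y) \in \mathbb{C}^2$, there is a unique positively oriented similar copy of $Q$ with $A=x$ and $C=y$; denote the resulting positions of $B$ and $D$ by $\sigma_B(x,y)$ and $\sigma_D(x,y)$, both $\mathbb{C}$-affine functions of $(x,y)$. An inscribed copy of $Q$ in $\gamma$ is then a point $(a,c) \in L := \gamma \times \gamma$ with $a \neq c$ such that $\sigma_B(a,c),\sigma_D(a,c) \in \gamma$; equivalently, writing $\Phi_Q(x,y) := (\sigma_B(x,y), \sigma_D(x,y))$, we seek a point of $L \cap \Phi_Q(L)$ off the diagonal $\Delta := \{(a,a):a\in\gamma\}$.

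The crux — and the place the cyclic hypothesis enters — is that $\Phi_Q$ is area-preserving on $\mathbb{R}^4$ precisely when $A,B,C,D$ are concyclic. This is a direct computation using the inscribed angle theorem: the similarity factor producing $B$ from $(A,C)$ and the similarity factor producing $D$ from $(A,C)$ are complex conjugates of one another, so the complex Jacobian of $\Phi_Q$ has unit modulus, hence $\Phi_Q \in \mathrm{Sp}(4,\R)$. In particular $L$ and $\Phi_Q(L)$ are smoothly embedded Lagrangian tori in $(\mathbb{C}^2,\wst)$, and they meet along $\Delta$ corresponding to degenerate inscriptions with $A=C$.

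The remaining — and genuinely hard — step is to force an intersection off $\Delta$. My plan is to descend to the quotient by the swap involution $(x,y)\mapsto(y,x)$, which acts symplectically on $\mathbb{C}^2$ with fixed locus $\Delta_{\mathbb{C}^2} = \{x=y\}$. After a symplectic resolution of the $A_1$ singularity, the images of $L$ and $\Phi_Q(L)$ complete to closed Lagrangian surfaces; if their only intersections lie on $\Delta$ then a careful gluing would produce an embedded Lagrangian Klein bottle in a Stein surface, contradicting the theorem of Shevchishin and Nemirovski. The existence of an off-diagonal intersection $(a,c)$ yields the sought-for inscription. The main obstacle will be this last step: a general cyclic quadrilateral lacks the extra $\bZ/2$-symmetry that a rectangle enjoys (swapping $B \leftrightarrow D$), so the naive quotient will not automatically produce a closed Lagrangian, and one must instead extract the needed surface from a more refined construction — either by combining $\Phi_Q$ with the conjugation symmetry coming from the opposite diagonal $BD$, or by developing a Floer-theoretic intersection invariant tailored to these Lagrangians whose non-vanishing directly forces the off-diagonal intersection without appealing to a global closed surface.
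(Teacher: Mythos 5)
Your setup is on the right track and matches the paper's: you form the Lagrangian tori $L_0 = \gamma\times\gamma$ and $L_1 = \Phi_Q(L_0)$ in $(\mathbb{C}^2,\wst)$, observe that the cyclic hypothesis is precisely what makes $\Phi_Q$ a linear symplectomorphism, and you correctly locate the real difficulty: forcing an intersection point off the clean circle $\Delta(\gamma)$ of degenerate inscriptions. But the step you propose to close the argument is the one that fails, and you yourself diagnose why in your final paragraph: the quotient-by-swap / Klein-bottle contradiction via Shevchishin--Nemirovski depends on a $\mathbb{Z}/2$ symmetry that exists for rectangles (this is essentially the argument of Greene--Lobb's earlier rectangular peg paper) but not for a general cyclic quadrilateral, where interchanging the endpoints of one diagonal is not a symmetry relating $L_0$ and $L_1$. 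So your plan produces no closed Lagrangian in the absence of off-diagonal intersections, and the obstruction never materializes.

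What the cited proof actually does instead is different in kind. Rather than quotient and complete, it performs a Lagrangian surgery on $L_0\cup L_1$ along the clean intersection circle $\Delta(\gamma)$, producing a single \emph{immersed} Lagrangian torus $L\looparrowright\mathbb{C}^2$ whose only self-intersections are exactly the points of $P = (L_0\cap L_1)\setminus\Delta(\gamma)$, i.e.\ the nondegenerate inscriptions of $Q$. One then computes that this surgered torus has minimum Maslov number $4$. If $P$ were empty, $L$ would be an \emph{embedded} Lagrangian torus in $\mathbb{C}^2$ with minimum Maslov number $4$, contradicting the resolution of the Audin conjecture for tori in $\mathbb{C}^2$ (Polterovich, Viterbo), which forces minimum Maslov number $2$ for embedded Lagrangian tori. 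Hence $P\neq\emptyset$. So the missing ingredient in your proposal is the surgery on the clean intersection locus together with the Maslov-number obstruction; these replace the closed-surface/Klein-bottle contradiction and are what make the argument work for an arbitrary cyclic quadrilateral rather than only for rectangles. Your closing suggestion of a "Floer-theoretic intersection invariant tailored to these Lagrangians" is closer to the machinery developed in the present paper for rectangles (where $L_0$ and $L_1$ are monotone and Hamiltonian isotopic), but it is not the route taken in the cited theorem, and for a general cyclic quadrilateral $L_1$ is not Hamiltonian isotopic to $L_0$, so the more elementary surgery-plus-Maslov argument is the one that applies.
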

\noindent
The proof of \Cref{cqpp} proceeds by constructing a pair of embedded Lagrangian tori $L_0$ and $L_1$ in standard symplectic $\bC^2$ associated with a cyclic quadrilateral $Q$ and a smooth Jordan curve $\gamma$.
The intersection between $L_0$ and $L_1$ consists of a clean loop $C$ and a disjoint set $P$.
The set $P$ parametrizes the inscriptions of $Q$ in $\gamma$.
The loop $C$ parametrizes {\em degenerate} inscriptions, regarding each point on $\gamma$ as an degenerate copy of $Q$.
By surgering $C$ away, a single {\em immersed} Lagrangian torus $L$ results, and it self-intersects precisely in the set $P$.
This torus has minimum Maslov number 4.
On the other hand, an embedded Lagrangian torus in $\bC^2$ has minimum Maslov number 2.
(This is the resolution of Audin's conjecture for tori in $\bC^2$, independently due to Polterovich \cite{polto1991} and Viterbo \cite{viterbo1990}.) 
This shows that $L$ self-intersects, so $P$ is non-empty, and this implies the existence of the desired inscription of $Q$ in $\gamma$.

\Cref{cqpp} is optimal in a couple of senses.
Firstly, certainly no other quadrilateral can inscribe in every smooth Jordan curve since the circle is itself such a curve.
Secondly, the regularity condition on the curves cannot be weakened from smooth all the way down to continuous.
Indeed, for every cyclic quadrilateral which is not an isosceles trapezoid, there exists a continuous Jordan curve (even a triangle) which does not inscribe it.  This underscores the seriousness of shrinkout.
For suppose that $Q$ is a cyclic quadrilateral which is not an isosceles trapezoid, $\gamma$ is a triangle which does not inscribe $Q$, and $\gamma_n$ is a sequence of smooth approximations to $\gamma$ with $\gamma_n \rightarrow \gamma$ as $n \rightarrow \infty$.
\Cref{cqpp} shows that there exists a copy $Q_n$ of $Q$ inscribed in each $\gamma_n$.
However, they necessarily limit to a single point~$\bullet \in \gamma$.

\subsection{Jordan Floer homology.}
Nevertheless, the framework for proving \Cref{cqpp} contains a clue to precluding shrinkout for squares and, more generally, for rectangles.
The reason is that when $Q$ is a rectangle, the associated tori $L_0$ and $L_1$ are {\em monotone} and {\em Hamiltonian isotopic} to one another.
This suggests defining a version of Lagrangian Floer homology for the pair $(L_0,L_1)$ in order to gain greater control over inscribed rectangles in $\gamma$.
A large part of this paper carries out such a construction.

In sketch, the construction begins with a chain complex $\JFC(\gamma,\theta)$ associated with a generic pair of a {\em real analytic} Jordan curve $\gamma$ and an angle $0 < \theta < \pi$.
The restriction to this class of curves is made to overcome an analytical issue, but such curves are $C^0$-dense in the space of all Jordan curves.
The complex is generated by the set $P$ of inscriptions of a {\em $\theta$-rectangle} $Q_\theta$ in $\gamma$ (a $\theta$-rectangle is one whose diagonals meet in the angle $\theta$).
The differential $\del$ on the complex counts pseudoholomorphic strips with boundary on $L_0$ and $L_1$ which join pairs of points in $P \subset L_0 \cap L_1$.
A novelty in the construction, and a key part of the proof that $\del^2 = 0$, is that the strips must avoid a complex line which cuts through both $L_0$ and $L_1$ in the loop $C$.

The construction furnishes chain homotopy equivalences $\JFC(\gamma,\theta_1) \simeq \JFC(\gamma,\theta_2)$ for different choices of angles $0 < \theta_1,\theta_2 < \pi$, and for $\theta$ close to $0$ we identify $\JFC(\gamma,\theta)$ with a relative Morse chain complex of the pair $(L_0,C)$.
These properties lead to the fact that the total homology $\JF(\gamma,\theta)$ is two-dimensional.
Associated to each homology generator of $\JF(\gamma,\theta)$ is a {\em spectral invariant}, a real number induced from a filtration on the complex $\JFC$ by {\em action}.
The spectral invariants are robust under approximation of an arbitrary Jordan curve by real analytic ones.
This robustness provides control over shrinkout for rectangles for a wide class of curves.

\subsection{Square pegs.}
The application we give is the following result.
Recall that a Jordan curve $\gamma \subset \bR^2$ of finite length is {\em rectifiable}; in other words, $\gamma$ is rectifiable if it can be approximated by a sequence of smooth curves of uniformly bounded length.
Let $\area(\gamma)$ denote the area of the region that $\gamma$ encloses, and let $\rad(\gamma)$ denote half of its diameter, $\rad(\gamma) = \frac12 \sup \{ |z-w|  : z,w \in \gamma \}$.
Note that $\area(\gamma) / \rad(\gamma)^2 \le \pi$, with equality exactly when $\gamma$ is a circle.

\begin{letteredthm}
\label{thm:chunky}
If $\gamma \subset \bR^2$ is a rectifiable Jordan curve, then there exists an interval $I \subset (0, \pi)$ of length at least  $\area(\gamma)/\rad(\gamma)^2$ such that $\gamma$ inscribes every $\theta$-rectangle with $\theta \in I$.
\end{letteredthm}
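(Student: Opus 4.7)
The plan is to extract the interval $I$ from the two spectral invariants $c_0(\gamma,\theta) \leq c_1(\gamma,\theta)$ of $\JF(\gamma,\theta)$ and to use their $C^0$-robustness to handle the rectifiable case.

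First I would reduce to the real analytic case: rectifiability lets one approximate $\gamma$ in $C^0$ by real analytic Jordan curves $\gamma_n$ of uniformly bounded length, with $\area(\gamma_n) \to \area(\gamma)$ and $\rad(\gamma_n) \to \rad(\gamma)$. For generic $(\gamma_n, \theta)$ both spectral invariants are defined and each is realized by an inscribed $\theta$-rectangle of $\gamma_n$. By robustness under $C^0$-approximation, any $\theta$ at which the spread $g(\theta) := c_1(\gamma_n,\theta) - c_0(\gamma_n,\theta)$ stays bounded away from zero as $n \to \infty$ yields a genuine inscription in the limit $\gamma$.

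The core step is a quantitative analysis of $g$ as a function of $\theta$. I expect two ingredients. \emph{(i)} The symmetry $\theta \leftrightarrow \pi - \theta$ of the construction makes $g$ symmetric about $\pi/2$, and the identification of $\JFC$ near the extreme angles with a relative Morse complex on $(L_0, C)$ lets one compute $g(\pi/2) \geq \area(\gamma_n)$: the spread of actions across the two homology classes is at least the symplectic area enclosed by $\gamma_n$. \emph{(ii)} The chain homotopy equivalences $\JFC(\gamma_n,\theta_1) \simeq \JFC(\gamma_n,\theta_2)$ are induced by Hamiltonian isotopies of the Lagrangian pair $(L_0(\theta), L_1(\theta))$ in $\bC^2$, and standard spectral invariant theory gives $|g(\theta_1) - g(\theta_2)| \leq 2\rad(\gamma_n)^2 \,|\theta_1 - \theta_2|$ from the Hofer norm of the generating Hamiltonian; the factor $\rad^2$ appears because rotating a $\theta$-rectangle whose diagonal has length at most $2\rad(\gamma_n)$ through a small angle sweeps an area of that order.

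Combining \emph{(i)} and \emph{(ii)}, $g(\theta) \geq \area(\gamma_n) - 2\rad(\gamma_n)^2\,|\theta - \pi/2| > 0$ on an interval of length at least $\area(\gamma_n)/\rad(\gamma_n)^2$ centered at $\pi/2$. On this interval the two spectral invariants remain distinct and uniformly bounded away from shrinkout, so $C^0$-robustness produces inscribed $\theta$-rectangles in $\gamma$ throughout the interval, and the interval persists in the limit $n \to \infty$. The main obstacle will be step \emph{(ii)}: identifying the explicit Hamiltonian that realizes the chain equivalence between $\JFC$ at different angles and bounding its Hofer norm by $\rad(\gamma)^2$ per unit of $\theta$, which is the geometric heart of the quantitative conclusion.
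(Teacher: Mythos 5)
Your proposal takes a genuinely different route from the paper, but it contains a gap that makes the quantitative conclusion fail.

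The paper works with a \emph{single} spectral invariant $\ell(\gamma,\theta) = \ell(\gamma,\theta,\beta)$ attached to the top-degree generator. Its proof rests on four facts established in \Cref{prop:properties_of_spectral_invariants}: spectrality, monotonicity, the endpoint values $\ell(\gamma,0)=0$ and $\ell(\gamma,\pi)=\area(\gamma)$, and the Lipschitz bound with constant $\rad(\gamma)^2$. The interval $I$ is then the preimage $\ell^{-1}\bigl((\epsilon, \area(\gamma)-\epsilon)\bigr)$; on it, the inscribed rectangle realizing the spectral value has action bounded away from $0 \bmod \area(\gamma)$, which is exactly the condition \Cref{lem:approximating_rectifiable} shows precludes shrinkout. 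You instead track the \emph{spread} $g(\theta) = c_1(\theta) - c_0(\theta)$ of the two spectral invariants.

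The gap is in your step \emph{(i)}. You assert $g(\pi/2) \ge \area(\gamma_n)$. Since both spectral invariants lie in $[0,\area(\gamma_n)]$, this would force $c_0(\pi/2)=0$ and $c_1(\pi/2)=\area(\gamma_n)$, which is incompatible with the Lipschitz bound unless $\area(\gamma_n) \le \tfrac{\pi}{2}\rad(\gamma_n)^2$; and it is simply false for a circle. The duality the paper records, $\ell(\gamma,\theta,\alpha) + \ell(\gamma,\pi-\theta,\beta) = \area(\gamma)$, gives at $\theta = \pi/2$ the relation $c_0(\pi/2) + c_1(\pi/2) = \area(\gamma)$, i.e.\ an \emph{upper} bound $g(\pi/2) \le \area(\gamma)$ rather than a lower one. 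Worse, for a circle the $\theta \leftrightarrow \pi-\theta$ symmetry you invoke forces $c_0(\pi/2) = c_1(\pi/2)$, so $g(\pi/2) = 0$, and your interval degenerates to a point --- exactly opposite to the truth, since a circle inscribes every rectangle. The Morse-model computation near the extreme angles (\Cref{subsec:morse}) also cannot feed the midpoint value: both invariants go to $0$ as $\theta \to 0$ and both go to $\area(\gamma)$ as $\theta \to \pi$, so $g$ vanishes at the endpoints and nothing controls its maximum.

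There is a second, independent issue: ``$g(\theta) > 0$'' is not the right shrinkout criterion. In \Cref{lem:approximating_rectifiable} what must be bounded away from the degenerate value is the \emph{action itself}, viewed in $\bR/\area(\gamma)\bZ$; if $c_0 = 0$ and $c_1 = \delta$ you have positive spread but $c_0$ could still be realized only by a degenerating rectangle. Replacing the spread $g$ by the single invariant $\ell$ and using its endpoint values plus monotonicity, as the paper does, recovers the theorem; your Lipschitz input \emph{(ii)} is essentially correct (up to the factor of $2$) and is indeed the ``Lagrangian control'' half of the argument, but \emph{(i)} has no substitute in your framework.
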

\noindent
The following corollary is immediate.
\begin{cor*}
	\label{cor:chunky_squares_exist}
	If $\gamma$ is rectifiable and encloses more than half the area of a circle of equal radius, then $\gamma$ inscribes a square. \qed
\end{cor*}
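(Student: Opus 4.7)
The plan is to approximate $\gamma$ by real analytic Jordan curves $\gamma_n$, apply the paper's Jordan Floer homology $\JF(\gamma_n,\theta)$ to each, and then use the two spectral invariants to track inscriptions through the approximation, precluding shrinkout on a large interval of angles. Because $\gamma$ is rectifiable, one can choose $\gamma_n \to \gamma$ in $C^0$ with uniformly bounded lengths, so $\area(\gamma_n) \to \area(\gamma)$ and $\rad(\gamma_n) \to \rad(\gamma)$. For each generic $\theta \in (0,\pi)$, the construction produces a two-dimensional homology $\JF(\gamma_n,\theta)$, whose two generators yield continuous spectral invariants $c_-(\gamma_n,\theta) \le c_+(\gamma_n,\theta)$ that are each realized by the action of an actual inscription of $Q_\theta$ in $\gamma_n$. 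By the advertised robustness of spectral invariants under $C^0$-approximation, these pass to well-defined $c_\pm(\gamma,\theta)$ for the given rectifiable curve. The point is that whenever $c_+(\gamma,\theta) > c_-(\gamma,\theta)$, the two spectral values are necessarily realized by genuinely distinct inscriptions in the limit, so shrinkout cannot occur and $\gamma$ inscribes $Q_\theta$.

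The task therefore reduces to proving that the set $S := \{\theta \in (0,\pi) : c_+(\gamma,\theta) > c_-(\gamma,\theta)\}$ contains an interval of length at least $\area(\gamma)/\rad(\gamma)^2$. I expect this to follow from two estimates. First, a pointwise upper bound
\[
c_+(\gamma,\theta) - c_-(\gamma,\theta) \;\le\; \rad(\gamma)^2,
\]
which should reflect that any pseudoholomorphic strip realizing the action difference between two inscriptions has symplectic area controlled by the squared diameter of $\gamma$, via a projection argument onto a coordinate plane encoding the positions of the rectangle's vertices. Second, an integral lower bound
\[
\int_0^\pi \bigl(c_+(\gamma,\theta) - c_-(\gamma,\theta)\bigr)\, d\theta \;\ge\; \area(\gamma),
\]
which should come from a derivative formula of Hamiltonian Floer type: the $\theta$-derivative of each spectral invariant equals (up to sign and normalization) the signed area enclosed by the corresponding inscribed $\theta$-rectangle's diagonals, and integrating over $\theta \in (0,\pi)$ recovers the area of the region bounded by $\gamma$. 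Combining these two estimates, the Lebesgue measure of $S$ is at least $\area(\gamma)/\rad(\gamma)^2$.

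The main obstacle, as I see it, is the final step: upgrading ``$S$ has large measure'' to ``$S$ contains a single \emph{interval} of the requisite length.'' Continuity of the spectral invariants makes $S$ open, but a priori $S$ could be a disjoint union of small intervals. The expected mechanism is a monotonicity or convexity property of the pair $(c_-(\gamma,\cdot), c_+(\gamma,\cdot))$, coming from the fact that the chain homotopy equivalences $\JFC(\gamma,\theta_1) \simeq \JFC(\gamma,\theta_2)$ produced by varying $\theta$ respect the action filtration in a controlled way. Concretely, the graphs of $c_\pm$ should be forced to cross (or coincide) only at the endpoints of a single interval: between such crossings, the two branches remain separated. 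An alternative route is to argue by contradiction, showing that if $S$ broke into several components then one could assemble a degenerate family of inscriptions with incompatible action data. Either way, establishing the interval structure, rather than just the measure bound, is the subtle analytic/algebraic point, and it is where I would expect most of the work to lie.
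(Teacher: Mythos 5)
Your proposal tackles \Cref{thm:chunky} rather than the corollary; in the paper, the corollary is genuinely immediate from \Cref{thm:chunky} by a single observation you have not made explicit: any interval $I \subset (0,\pi)$ of length greater than $\pi/2$ must contain the midpoint $\pi/2$, and a $\pi/2$-rectangle is a square. The hypothesis $\area(\gamma) > \tfrac12 \pi\, \rad(\gamma)^2$ is exactly $\area(\gamma)/\rad(\gamma)^2 > \pi/2$, so $|I| > \pi/2$ and $\pi/2 \in I$. This one-line deduction should appear.

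For \Cref{thm:chunky} itself, your route diverges from the paper's and has two genuine gaps. The paper works with a \emph{single} spectral invariant $\ell(\gamma,\theta)$ (attached to the top-degree generator), establishes in \Cref{prop:properties_of_spectral_invariants} that $\theta \mapsto \ell(\gamma,\theta)$ is monotonic increasing, Lipschitz with constant $\rad(\gamma)^2$, and has $\ell(\gamma,0)=0$, $\ell(\gamma,\pi)=\area(\gamma)$, and then takes $I=(a,b)$ with $a$, $b$ the first and last times $\ell$ crosses $\epsilon$ and $\area(\gamma)-\epsilon$. Monotonicity hands you the interval for free, so the ``measure versus interval'' obstacle you flag as the hard point simply does not arise. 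Your proposal instead introduces two spectral invariants $c_\pm$, a pointwise Lipschitz-type bound on $c_+ - c_-$, and an integral lower bound $\int_0^\pi (c_+-c_-)\,d\theta \ge \area(\gamma)$, neither of which the paper proves and whose truth is unclear; you then correctly identify that even if both held, you would only control the measure of $S$, not its connectedness, and you have no mechanism for the upgrade. That gap is real and fatal to your argument as written.

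The second gap concerns your shrinkout mechanism. You assert that $c_+(\gamma,\theta) > c_-(\gamma,\theta)$ forces genuinely distinct inscriptions and hence precludes shrinkout. This does not follow: two distinct approximating rectangles $Q_n^\pm$ can both degenerate to points of $\gamma$, and their actions will then tend to $0$ in $\bR/\area(\gamma)\bZ$, i.e.\ to $0$ or $\area(\gamma)$ in $\bR$, which is entirely consistent with $c_+ > c_-$. What actually rules out shrinkout in the paper (\Cref{lem:approximating_rectifiable}) is a two-sided bound on a single action, $\epsilon < \cA(\tau_{Q_n}) < \area(\gamma)-\epsilon$, combined with the uniform length bound on $\gamma_n$: the no-shrinkout lemma shows that a degenerating rectangle's action must approach $\{0,\area(\gamma)\}$, so bounding it away from both endpoints forces a nondegenerate limit. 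The difference $c_+-c_-$ being positive does not supply any such two-sided bound. You would need to replace your mechanism by the paper's, which is exactly what \Cref{cor:spectral_interval} (bounding the spectral invariant into $(\epsilon,\area(\gamma)-\epsilon)$ on an interval of $\theta$) feeds into \Cref{lem:approximating_rectifiable}.
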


\subsection{Overview of the construction of Jordan Floer homology.}
\label{ss:overview}
The paper unfolds as follows.
In \Cref{sec:JFC}, we begin with a triple of
\begin{itemize}
\item
a real analytic Jordan curve $\gamma \subset \bR^2 = \bC$,
\item
an angle $\theta \in (0,\pi)$, and
\item
analytic data $\mathbb{D}$ consisting of an admissible Hamiltonian and almost-complex structure.
\end{itemize}
From it we construct a $\bZ$-graded, $\bR$-filtered, $\bF_2$-chain complex
\[
\JFC_*(\gamma,\theta,\mathbb{D}) {\rm ,}
\]
which we shall term the \emph{Jordan Floer} complex.  Generically, $\JFC_*(\gamma,\theta,\mathbb{D})$ is freely generated by the transverse points of intersection between a pair of cleanly intersecting, monotone Lagrangian tori:
\[
L_0 = \gamma \times \gamma \subset \bC^2 \,\,\, {\rm and} \,\,\, L_1 = R_\theta(L_0) \subset \bC^2 {\rm .}
\]
Here $R_\theta : \bC^2 \to \bC^2$ denotes rotation through angle $\theta$ about the diagonal complex line
\[
\Delta(\bC) := \{(z,z) : z \in \bC\} \subset \bC^2 {\rm .}
\]
It is also the time-$\theta$ flow generated by the Hamiltonian function
\[
H \colon \bC^2 \longrightarrow \bR \colon (z,w) \longmapsto \frac14 |z-w|^2 {\rm ,}
\]
which shows that $L_1$ is Hamiltonian isotopic to $L_0$.
The transverse points of intersection between $L_0$ and $L_1$ are in one-to-one correspondence with inscriptions of $Q_\theta$ in $\gamma$: 
\[
(z,w) \in L_0 \cap L_1 \iff (z,w), R_\theta(z,w) \textup{ span diagonals of an inscription of } Q_\theta \textup{ in } \gamma.
\]
The clean loop of intersection
\[
\Delta(\gamma) = \{ (z,z) : z \in \gamma\} \subset L_0 \cap L_1
\]
corresponds to degenerate inscriptions in which the four vertices of the rectangle degenerate to a single point on the curve.

We introduce a class of admissible Hamiltonians $h_t$ in \Cref{subsec:admissible_curves}.
Their function is to treat the case in which $L_0$ and $L_1$ do not intersect transversely away from $\Delta(\gamma)$.
We introduce the moduli spaces of strips that we study in \Cref{subsec:strips} and the class of admissible almost-complex structures $J_t$ we require in  \Cref{subsec:ac_structures}.
The key requirement on an admissible $J_t$ is that it agrees with the standard almost-complex structure $\Jstd$ in a neighborhood of $\Delta(\bC)$, so that $\Delta(\bC)$ is a $J_t$-holomorphic divisor.
We describe the relationship between the construction of $\JFC(\gamma,\theta,\mathbb{D})$ presented here with a formulation in terms of Hamiltonian trajectories which begin and end on $L_0$ in \Cref{subsec:reformulation} 

We define the differential $\del$ on $\JFC_*(\gamma,\theta,\mathbb{D})$ in \Cref{subsec:differential}.
It counts certain pseudoholomorphic strips $u : \bR \times [0,1] \to \bC^2$.
As usual, the strips
\begin{enumerate}
\item
obey the Cauchy-Riemann equation $\overline{\del}_{\mathbb{D}} u = 0$,
\item
satisfy the boundary conditions $u(\bR \times 0) \subset L_0$ and $u(\bR \times 1) \subset L_1$,
\item
have bounded energy, and
\item
have Maslov index 1.
\end{enumerate}
Crucially, we impose one more condition: 
\begin{enumerate}
\item[(5)]
the closure of the image $u(\bR \times [0,1])$ is disjoint from $\Delta(\bC)$.
\end{enumerate}

Proving that $\del^2=0$ is the main content of \Cref{subsec:differential}.
This fact does not seem to follow from any known constructions.
The main issue is to analyze what happens in a smooth, 1-parameter family of strips $u_r$ which satisfy (1)-(3) above and such that some strip satisfies the disjointness condition (5).
As we show, the disjointness condition is then inherited by all other strips $u_r$ and their Gromov limits.
This requires a careful argument, since the divisor $\Delta(\bC)$ cuts through both tori $L_0$ and $L_1$.
This is where we use the fact that $\gamma$ is real analytic and invoke the conditions on our analytic data.

\Cref{thm:jf} asserts that the homology group is two-dimensional, supported in homological (Maslov) gradings 1 and 2:
\[
\JF_*(\gamma,\theta,\mathbb{D}) = (\bF_2)_{(2)} \oplus (\bF_2)_{(1)}.
\]
This follows by examining the dependence of the complex on $\theta$ and $\mathbb{D}$.
For a different choice of angle $0 < \theta' < \pi$ and analytic data $\mathbb{D}'$, we modify the definition of the differential to define a filtered chain homotopy equivalence $\JFC_*(\gamma,\theta,\mathbb{D}) \overset{\sim}{\to} \JFC_*(\gamma,\theta',\mathbb{D}')$.
This is done over the course of \Cref{subsec:continuation}.
When $\theta$ is chosen close to 0 or $\pi$, we show that $\JFC_*(\gamma,\theta,\mathbb{D})$ is isomorphic to the relative Morse chain complex $CM_*(\gamma \times \gamma, \Delta(\gamma); H)$, whose homology is in turn isomorphic to $(\bF_2)_{(2)} \oplus (\bF_2)_{(1)}$.
This is carried out in \Cref{subsec:morse}.
Note that the group is $\bZ$-graded, instead of $\bZ_2$-graded, as one might expect for a monotone Lagrangian with minimum Maslov number two.
The enhancement arises from using diagonal-avoiding strips in the construction of the invariant.

\subsection{Spectral invariants.}
As remarked, $\JFC_*(\gamma,\theta,\mathbb{D})$ also comes with an $\bR$-valued {\em action filtration}.
The action filtration of a generator is treated concretely in \Cref{sec:actions_and_rectangles}.

The action filtration on $\JFC_*(\gamma,\theta,\mathbb{D})$ descends to a filtration on $\JF_*(\gamma,\theta,\mathbb{D})$ studied in \Cref{sec:spectral_invariants}.
Each homology class in the group $\JF_*(\gamma,\theta,\mathbb{D})$ possesses a spectral invariant, its induced filtration grading.
All of the information in our setting is contained in the spectral invariant $\ell(\gamma,\theta)$ attached to the top dimensional generator.
As the notation suggests, it is independent of the choice of analytic data $\mathbb{D}$.

The application to \Cref{thm:chunky} follows from properties of this spectral invariant collected in \Cref{prop:properties_of_spectral_invariants}.
More precisely, $\ell(\gamma,\theta)$ is a continuous, monotonic non-decreasing function of $\theta \in (0,\pi)$ which limits to 0  as $\theta \to 0$ and to $\area(\gamma)$ as $\theta \to \pi$.
Since it is monotonic, it is differentiable almost everywhere, and its derivative is bounded above by $\rad(\gamma)^2$.
These properties enable us to establish \Cref{thm:chunky} for a real analytic Jordan curve $\gamma$ with bounds on the spectral invariants of the inscriptions of $Q_\theta$ in $\gamma$ with $\theta \in I$.

If $\gamma$ is a rectifiable Jordan curve, then we may approximate it by a sequence of real analytic Jordan curves $\gamma_n$ of bounded length by the Riesz-Prasolov theorem, a precise form of the Riemann mapping theorem for a domain with rectifiable boundary \cite[Theorem 6.8]{pommerenke}.
If $\theta \in I$ and $Q_n$ is an inscription of a $\theta$-rectangle in $\gamma_n$, then the action bound on $Q_n$ and the length bound on $\gamma_n$ combine to show that the sequence $Q_n$ converges to a nondegenerate $\theta$-rectangle $Q$ inscribed in $\gamma$.

\subsection{Varying the curve.}
In this paper we study how $\JF(\gamma,\theta)$ (and in particular its spectral invariants) vary with $\theta$.  It is a natural idea to try to perform the same kind of analysis while varying rather the Jordan curve $\gamma$ by a Hamiltonian flow on $\bC$.  There are some difficulties with this idea, the first analytic and the others topological, which we summarize here.

The analytic difficulty is that it is tricky to define continuation maps giving chain homotopy equivalences between chain complexes $\JFC(\gamma,\theta)$ for varying $\gamma$.  The problem is to guarantee that diagonal-avoiding strips do not degenerate in $1$-parameter families to strips meeting the diagonal.  
In a sequel \cite{greenelobb4}, we overcome this analytic difficulty by abandoning continuation maps and turning to Floer's original approach for defining chain maps, the \emph{bifurcation} method \cite{floerlag}.
In the current paper we borrow from this method in Section \ref{sec:spectral_invariants} when studying the rate of change of spectral invariants.

The second difficulty is that it seems hard, even in this setting, to circumvent the rectifiability hypothesis.

The third difficulty is that one would like to control the rate of change of the action with varying $\gamma$ in a way that might, for example, lead to the resolution of the Square Peg Problem for all rectifiable curves.
Looking ahead to Figures \ref{fig:icecreamaction} and \ref{fig:doubleicecream} which depict the action associated to an inscribed rectangle, we warn the reader that this example shows the rectangle {\em elegantly} inscribed in the Jordan curve.
It seems unlikely that for a general real analytic Jordan curve one should expect the spectral invariant to be attained by the action of such a rectangle.  For the Jordan curve $\gamma$ and rectangle $Q$ in Figure \ref{fig:icecreamaction}, any local isotopy of $\gamma$, performed away from the vertices of $\gamma$, that adds $\epsilon$ to the area bounded by $\gamma$, will change the action of the rectangle by at most $\epsilon$.  On the other hand, given any $N \gg 0$, it is possible to draw a Jordan curve with an inscribed rectangle whose action changes, under a small local isotopy adding $\epsilon$ to the area bounded by the curve, by $N \epsilon$.  That such bad rectangles can exist is at the root of this third difficulty.
The sequel \cite{greenelobb4} examines a case in which {\em a priori} all inscriptions are elegant, leading to some inscription results which fall outside the scope of \Cref{thm:chunky}.

\section*{Acknowledgments.}
\label{sec:acknowledgments}
We thank several mathematicians for useful conversations and sparing us their time during the course of this work: Denis Auroux, Paul Biran, Martin Bridgeman, Cole Hugelmeyer, Michael Hutchings, Joe Johns, Dusa McDuff, Tom Mrowka, Dietmar Salamon, Felix Schm\"ashke, and Chris Wendl.
In particular, Michael Hutchings pointed out that we were working with spectral invariants without knowing the term for them, which opened up the literature for us.

Part of this research was performed in Fall 2022 while the authors were visiting the Simons Laufer Mathematical Sciences Institute, which is supported by the National Science Foundation (Grant No. DMS-1928930).
We also gratefully acknowledge the Simons Foundation for hosting our visit to the 2024 Simons Collaboration on New Structures in Low-Dimensional Topology Annual Meeting.

\newpage
\section{Jordan Floer homology.}
\label{sec:JFC}

This section defines the Jordan Floer chain complex $\JFC(\gamma,\theta,h_t,J_t)$ for an admissible quadruple of Jordan curve $\gamma$, angle $\theta$, Hamiltonian perturbation $h_t$, and almost-complex structure $J_t$.

The first major result we prove in this section is that the boundary operator on the complex is indeed a differential.  The proof of this involves checking what happens as moduli spaces of strips degenerate, and verifying that these degenerations take place away from the diagonal $\Delta(\bC)$.

The remaining results follow from considering degenerations of more elaborate families of strips.  The reasons that these degenerations also stay away from the diagonal are already present in the proof of the first result, so we spend more time on this than the following results.  These results include that there exist continuation maps defining chain maps $\JFC(\gamma,\theta_1,h^1_t,J^1_t) \rightarrow \JFC(\gamma,\theta_2,h^2_t,J^2_t)$ for pairs of admissible quadruples with fixed $\gamma$.  Furthermore these chain maps are compatible with one another in the sense that any composition of any two such chain maps
\[ \JFC(\gamma,\theta_1,h^1_t,J^1_t) \longrightarrow \JFC(\gamma,\theta_2,h^2_t,J^2_t) \longrightarrow \JFC(\gamma,\theta_3,h^3_t,J^3_t) \]
is chain homotopy equivalent to any such chain map
\[  \JFC(\gamma,\theta_1,h^1_t,J^1_t) \longrightarrow \JFC(\gamma,\theta_3,h^3_t,J^3_t) {\rm .} \]
It follows firstly that we get a well-defined Jordan Floer homology group $\JF(\gamma,\theta)$.  Secondly, we get compatible isomorphisms
\[ \JF(\gamma, \theta_1) \longrightarrow \JF(\gamma, \theta_2) \]
for any choices $0 < \theta_1, \theta_2 < \pi$.

\subsection{Hamiltonians.}
\label{subsec:hamiltonians}
We recall some notation from the Introduction.
Our main interest lies in a very simple Hamiltonian:
\[
H \colon \bC^2 \longrightarrow \bR \colon (z,w) \longmapsto \frac{\vert z - w \vert^2}{4} {\rm .}
\]
We write the time-$\theta$ flow of this Hamiltonian $\Phi^{\theta}_H$ as $R_\theta \colon \bC^2 \rightarrow \bC^2$; this is a rotation of $\bC^2$ by an angle of $\theta$ about the axis given by the diagonal $\Delta(\bC) = \{ (z,z) : z \in \bC \} \subset \bC^2$.

\begin{defin}
\label{defin:smoothed_radial_Hamiltonian}
Let $\beta \colon [0,1] \rightarrow [0,\infty)$ be a smooth function such that $\int_0^1 \beta = 1$ and $\beta(t) =0$ for $0 \leq t \leq 0.1$ and for $0.9 \leq t \leq 1$.

We define $H_t(z,w) = \beta(t)H(z,w)$ for $0 \leq t \leq 1$ and $(z,w) \in \bC^2$.
\end{defin}

The function $H_t$ is a time-dependent Hamiltonian version of the Hamiltonian $H$, useful to us since for technical reasons we will wish our Hamiltonians to have zero derivative near $t=0,1$.  Again notice that the time-$\theta$ flow of $H_t$ is just $\Phi^\theta_{H_t} = R_\theta$.  In fact, we shall generally prefer flowing the Hamiltonian $\theta H_t$ for time $1$, which again gives $\Phi^1_{\theta H_t} = \Phi^\theta_{H_t} = R_\theta$.

\subsection{The chain group of an admissible triple.}
\label{subsec:admissible_curves}
Now let $\gamma \subset \bR^2 = \bC$ denote a {\em smooth} Jordan curve.
As such, it is Lagrangian with respect to the standard symplectic form on $\bC$, hence so is $\gamma \times \gamma \subset (\bC^2,\omega_{\mathrm{std}})$.
We would like, if we could, to study the Lagrangian Floer homology of the pair of Lagrangians
\[
L_0 = \gamma \times \gamma, \quad L_1 = R_\theta (\gamma \times \gamma).
\]
The first problem is that these Lagrangians have a clean loop of intersection $\Delta(\gamma)$,
and we plan to avoid this loop by analytic arguments which will involve restricting $\gamma$ to being {\em real analytic}.
The second problem is one that Lagrangian Floer theorists have well-developed techniques to handle -- that the remaining intersection points of $\gamma \times \gamma$ and $R_\theta (\gamma \times \gamma)$ may not be transverse.  The usual way to deal with this difficulty is by introducing a small Hamiltonian perturbation $h_t$ to achieve transversality.  We first introduce a distance which will be useful in choosing how close to the diagonal $\Delta(\bC)$ we take the small perturbations.

\begin{defin}
[Width]
	\label{defin:width}
	The {\em width} of a real analytic Jordan curve $\gamma$ is the infimal diameter $\w(\gamma)$ of the inscribed rectangles in $\gamma$.
\end{defin}
\noindent
Since the curvature of a smooth Jordan curve is uniformly bounded above (by compactness), a short argument (again by compactness) shows that width is positive: $\w(\gamma) > 0$.

\begin{defin}
[Admissible triple]
	\label{defin:admissible_triple}
	An \emph{admissible triple} $(\gamma, \theta, h_t)$ is a real analytic Jordan curve $\gamma$, an angle $0 < \theta < \pi$, and a time-dependent Hamiltonian perturbation $h_t \colon \bC^2 \rightarrow \bR$ for times $0 \leq t \leq 1$ satisfying the following conditions.
	\begin{enumerate}
		\item We have $h_t = 0$ whenever $0 \leq t \leq 0.1$ or $0.9 \leq t \leq 1$.
		\item We have $h_t(z,w) = 0$ whenever $\vert z - w \vert \leq \w(\gamma)/2$.
		\item The Lagrangians $\gamma \times \gamma$ and $\Phi^1_{\theta H_t + h_t} (\gamma \times \gamma)$ intersect cleanly along $\Delta(\gamma)$
		and everywhere else transversely.
	\end{enumerate}
\end{defin}
Notice that condition (2) ensures that $\Phi^1_{\theta H_t + h_t} (\gamma \times \gamma)$ coincides with $R_\theta(\gamma \times \gamma)$ in a neighborhood of $\Delta(\gamma)$.
Therefore, the condition that it intersects $\gamma \times \gamma$ cleanly along $\Delta(\gamma)$ actually follows from condition (2).
Since intersection points $\gamma \times \gamma \cap R_\theta(\gamma \times \gamma)$ are either in $\Delta(\gamma)$ or are at a distance of at least $\w(\gamma)$ from $\Delta(\bC)$, every pair $(\theta, \gamma)$ admits admissible $h_t$.

We write $X_{\cH_t}$ for the time-dependent Hamiltonian vector field associated to a Hamiltonian $\cH_t$.

\begin{defin}
[Trajectory]
	\label{defin:traj}
	A \emph{trajectory} $\tau \colon [0,1] \rightarrow \bC$ of $\theta H_t + h_t$ is an integral curve of the vector field $X_{\theta H_t + h_t}$
	\[
	\tau'(t) = X_{\theta H_t + h_t} \circ \tau(t)
	\]
	such that $\tau(0), \tau(1) \in \gamma \times \gamma$.
\end{defin}

Trajectories thus correspond to intersection points of $\gamma \times \gamma$ and $\Phi_{\theta H_t + h_t}(\gamma \times \gamma)$.
In particular, in the unperturbed case $h_t = 0$, these are points of $(\gamma \times \gamma) \cap R_{\theta} (\gamma \times \gamma)$.

In Lagrangian Floer theory, one typically works with generators drawn from
the space of pairs $(\tau, [\widehat{\tau}])$, where $\widehat{\tau} : [0,1] \times [0,1] \to \bC^2$ is a {\em capping} of $\tau$.  A capping is a smooth map obeying the conditions
\[
\widehat{\tau}(0,t) = \tau(t), \quad \widehat{\tau}(s,0) = \tau(0), \quad \widehat{\tau}(s,1) = \tau(1), \quad \widehat{\tau}(1,t) \in \gamma \times \gamma, \quad \forall s,t \in [0,1],
\]
and $[\widehat{\tau}]$ denotes its homotopy class relative to these conditions.
The construction of the Jordan Floer homology chain complex is smaller, due to the following result.

\begin{lem}
	[Preferred capping]
	\label{lem:capping}
	Each trajectory $\tau$ which is disjoint from the diagonal possesses a unique homotopy class of capping $[\widehat{\tau}]$ such that $\widehat{\tau}$ is disjoint from the diagonal: $\im(\widehat{\tau}) \cap \Delta(\bC) = \emptyset$.
\end{lem}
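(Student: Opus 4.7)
The plan is to reduce both halves of the lemma to one topological input: the inclusion $L_0 \setminus \Delta(\gamma) \hookrightarrow \bC^2 \setminus \Delta(\bC)$ induces an isomorphism on $\pi_1$. To verify this, the linear change of coordinates $(z,w) \mapsto (z-w, z+w)$ identifies $\bC^2 \setminus \Delta(\bC) \cong \bC^* \times \bC \simeq S^1$, so $\pi_1(\bC^2 \setminus \Delta(\bC)) \cong \bZ$ is detected by the linking number with $\Delta(\bC)$ (equivalently, by the winding of $z-w$ about $0$), and $\pi_2 = 0$. Meanwhile $\Delta(\gamma)$ is a simple closed $(1,1)$-curve on the torus $L_0 = \gamma \times \gamma$, so $L_0 \setminus \Delta(\gamma)$ is an open annulus with $\pi_1 = \bZ$, generated by a loop $\delta$ of the form $t \mapsto (\sigma(t), \rho(t))$ where $\sigma, \rho$ each traverse $\gamma$ once and $\sigma(t) \neq \rho(t)$ for every $t$; the difference $\sigma(t) - \rho(t)$ winds once about $0$, so $\delta$ has linking number $\pm 1$ with $\Delta(\bC)$, confirming the claimed isomorphism.

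Next I would identify homotopy classes of cappings of $\tau$ with homotopy classes, rel endpoints, of arcs $\alpha \subset L_0$ from $\tau(0)$ to $\tau(1)$, via $\widehat{\tau} \mapsto \widehat{\tau}(1, \cdot)$. Surjectivity is immediate since $\bC^2$ is contractible, so any $\tau \cdot \alpha^{-1}$ bounds a disk which can be reparametrised as a capping. Injectivity follows by extending one capping by an $L_0$-homotopy between the two right-edge arcs to produce a capping that shares its entire boundary with the other, then invoking $\pi_2(\bC^2) = 0$. Under this bijection, a capping is diagonal-avoiding if and only if $\alpha \subset L_0 \setminus \Delta(\gamma)$ and the loop $\tau \cdot \alpha^{-1}$ has linking number zero with $\Delta(\bC)$: the capping itself furnishes a null-homotopy in $\bC^2 \setminus \Delta(\bC)$ in one direction, while $\pi_2(\bC^2 \setminus \Delta(\bC)) = 0$ lets one extend any null-homotopy to a disk-fill in the other.

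Both halves of the lemma now fall out. For existence, the connectedness of $L_0 \setminus \Delta(\gamma)$ lets one pick any arc $\alpha_0$ between $\tau(0), \tau(1) \in L_0 \setminus \Delta(\gamma)$; the associated loop has some linking $n$ with $\Delta(\bC)$, and surjectivity of the $\pi_1$-map supplies a power of $\delta$ by which to modify $\alpha_0$ so as to kill this linking, yielding a diagonal-avoiding capping. For uniqueness, two diagonal-avoiding cappings with right-edge arcs $\alpha_1, \alpha_2 \subset L_0 \setminus \Delta(\gamma)$ give a loop $\alpha_1 \cdot \alpha_2^{-1}$ in $L_0 \setminus \Delta(\gamma)$ with zero linking against $\Delta(\bC)$ (since each $\tau \cdot \alpha_i^{-1}$ has linking zero); injectivity of the $\pi_1$-map forces this loop to be null-homotopic in $L_0 \setminus \Delta(\gamma)$, hence in $L_0$, so $\alpha_1 \simeq \alpha_2$ rel endpoints and $[\widehat{\tau}_1] = [\widehat{\tau}_2]$.

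The main point to verify is the $\pi_1$-isomorphism between $L_0 \setminus \Delta(\gamma)$ and $\bC^2 \setminus \Delta(\bC)$; once it is in hand, both halves of the lemma reduce to transparent bookkeeping with linking numbers.
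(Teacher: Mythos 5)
Your proof is correct and is a full expansion of the same observation the paper uses: the paper's one-line proof simply notes that the core curve of the annulus $\gamma \times \gamma \setminus \Delta(\gamma)$ has winding number $1$ around $\Delta(\bC)$, which is precisely your loop $\delta$ with linking number $\pm1$, and the rest of your argument (the $\pi_1$-isomorphism, the identification of cappings with right-edge arcs, and the linking-number bookkeeping for existence and uniqueness) is the standard elaboration that the paper leaves implicit.
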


\begin{proof}
	This is clear once one makes the observation that the core curve of the cylinder $\gamma \times \gamma \setminus \Delta(\gamma)$ has winding number $1$ around $\Delta(\bC) \subset \bC^2$.
\end{proof}

\begin{defin}[Generators of the Jordan Floer chain complex]
	\label{defin:generators}
	Given an admissible triple $(\gamma,\theta,h_t)$, let $\cG(\gamma,\theta,h_t)$ denote the set of non-constant trajectories of $\theta H_t + h_t$:
	\begin{equation}
	\label{e:trajectory}
	\tau : [0,1] \to \bC^2, \quad \tau(0), \tau(1) \in \gamma \times \gamma, \quad {\tau}'(t) = X_{\theta H_t + h_t} \circ \tau(t).
	\end{equation}
	Let $\JFC(\gamma,\theta)$ denote the $\bF_2$-vector space freely generated by $\cG(\gamma,\theta,h_t)$.
\end{defin}

Thus if $(\gamma, \theta, 0)$ is admissible we may identify $\cG(\gamma, \theta, 0)$ with the set
\[ (\gamma \times \gamma \cap R_\theta(\gamma \times \gamma))\setminus \Delta(\gamma) {\rm .} \]
A trajectory obeying \eqref{e:trajectory} is constant if and only if it is contained in the diagonal.
It follows that each $\tau \in \cG(\gamma,\theta)$ has a preferred homotopy class of capping.

\begin{lem}
[Trajectories stay away from the diagonal]
\label{lem:distant_trajectory}
Each trajectory $\tau \in \cG(\gamma,\theta,h_t)$ remains at distance $> \w(\gamma)/2$ from $\Delta(\bC)$: if $\tau(t) = (z,w)$, then $|z-w| > \w(\gamma)/2$.
\end{lem}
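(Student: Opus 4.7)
The plan is to argue by contradiction: suppose $\tau \in \cG(\gamma,\theta,h_t)$ satisfies $|z(t_0)-w(t_0)| \le \w(\gamma)/2$ at some time $t_0 \in [0,1]$. The starting observation is that by condition (2) of \Cref{defin:admissible_triple}, the perturbation $h_t$ vanishes identically on the closed set $K := \{(z,w) \in \bC^2 : |z-w| \le \w(\gamma)/2\}$. Since $K$ is the closure of its interior and $h_t$ is smooth, all partial derivatives of $h_t$ also vanish throughout $K$, so the Hamiltonian vector field $X_{h_t}$ is identically zero on $K$.

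Next I would compare $\tau$ with the pure rotational flow. Because $\theta H_t(z,w) = \theta \beta(t) |z-w|^2/4$ depends only on $|z-w|$, its flow is the rotation $R_\theta$ about $\Delta(\bC)$ and preserves $|z-w|$. Let $\sigma \colon [0,1] \to \bC^2$ denote the solution to $\sigma' = X_{\theta H_t}(\sigma)$ with initial condition $\sigma(t_0) = \tau(t_0)$; then $\sigma$ remains on the sphere $\{|z-w| = |z(t_0)-w(t_0)|\} \subset K$ for all $t$. Along $\sigma$ we have $X_{h_t} = 0$, so $\sigma$ also solves $\sigma' = X_{\theta H_t + h_t}(\sigma)$, and uniqueness of ODE solutions forces $\tau = \sigma$ on all of $[0,1]$. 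In particular $\tau(1) = R_\theta(\tau(0))$ and $|z(t)-w(t)|$ is identically equal to $|z(t_0)-w(t_0)|$.

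Writing $\tau(0) = (z_0, w_0)$, both $(z_0, w_0)$ and $R_\theta(z_0, w_0)$ lie in $\gamma \times \gamma$, so their four coordinates form the vertices of an inscribed $\theta$-rectangle in $\gamma$ of diameter $|z_0 - w_0| \le \w(\gamma)/2$. If this diameter is zero, then $\tau(0) \in \Delta(\bC)$ and $\tau$ is constant, contradicting the hypothesis of non-constancy. Otherwise we obtain a genuine inscribed rectangle of diameter strictly less than $\w(\gamma)$, contradicting the definition of width as the infimum of such diameters. The only subtle point I foresee is the handling of trajectories that merely touch the sphere $\{|z-w|=\w(\gamma)/2\}$ without entering its interior: at such boundary points of $K$ the uniqueness comparison still requires $X_{h_t}$ to vanish, which is precisely why the smoothness argument in the first paragraph (forcing agreement of partial derivatives with their limits from the interior of $K$) is needed.
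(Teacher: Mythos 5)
Your proof is correct and follows essentially the same route as the paper's: both identify that the perturbation $h_t$ vanishes (to first order, by smoothness) on $N=\{|z-w|\le \w(\gamma)/2\}$, deduce that a trajectory entering $N$ must be a trajectory of the pure rotational flow and hence connects the diagonals of an inscribed $\theta$-rectangle of diameter $\le\w(\gamma)/2$, and derive a contradiction with the definition of width. Your writeup is somewhat more explicit about the ODE uniqueness step and the first-order vanishing at $\partial N$, but the underlying argument is the same.
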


\begin{proof}
The flow of $X_{\theta H_t + h_t}$ preserves the level sets of $|z-w|$ in the neighborhood
\[
N = \{(z,w) : |z-w| \le \w(\gamma)/2\},
\]
since the flow of $X_{\theta H_t}$ does so and $h_t$ vanishes on this set.
Hence any trajectory of $X_{\theta H_t + h_t}$ which enters $N$ is in fact contained in $N$ and is thus a trajectory of $X_{\theta H_t}$.
On the other hand, a nonconstant trajectory of $X_{\theta H_t}$ begins at a point $(z,w)$ which consists of the endpoints of a diagonal of an inscribed $\theta$-rectangle, and therefore obeys $|z-w| \ge \w(\gamma)$.
Hence no nonconstant trajectory of $X_{\theta H_t + h_t}$ can enter $N$.
\end{proof}

The importance of \Cref{lem:distant_trajectory} becomes apparent in \Cref{subsec:ac_structures} and beyond.
All trajectories in $\cG(\gamma,\theta,h_t)$ lie entirely outside the neighborhood in which the almost-complex structures we use are prescribed to be standard.
This is important in achieving transversality for the moduli spaces used in defining the Lagrangian Floer package of differentials, chain maps, and chain homotopy equivalences.

\subsection{Strips.}
\label{subsec:strips}

The Jordan Floer chain group $\JFC(\gamma, \theta, h_t)$ depends on a triple of data.
The Floer differential is defined in terms of a count of `strips' satisfying a Cauchy-Riemann-Floer equation, and shall therefore moreover require a choice of almost-complex structure.

Let $\cJ(\bC^2,\omega)$ denote the space of smooth, almost-complex structures on $\bC^2$ which are {\em compatible} with $\omega$ in the usual sense that
\[
|\cdot|_{J} := \omega( \cdot, J \cdot)
\]
defines a positive definite quadratic form on $T \bC^2$.
Distinguished amongst these is the {\em standard} almost-complex structure $\Jstd$ induced from multiplication by $\sqrt{-1}$.
All almost-complex structures on $\bC^2$ to follow are understood to be smooth and $\omega$-compatible.

Suppose that $(\gamma,\theta,h_t)$ is an admissible triple and $J_t$ is a time-dependent almost-complex structure.
We define the strip
\[ \Sigma \: = \bR \times [0,1] {\rm .} \]
We shall typically refer to points of $\Sigma$ by coordinates $(s,t) \in \Sigma$.
We consider smooth maps
\[
u \colon \Sigma \longrightarrow \bC^2
\]
which satisfy the boundary conditions
\begin{equation}
\tag{BC}
\begin{cases}
u(\bR \times \{0\}) \subset \gamma \times \gamma, \\[.2cm] u(\bR \times \{1\}) \subset \gamma \times \gamma;
\end{cases}
\end{equation}
which satisfy the non-linear Cauchy-Riemann-Floer equation
\begin{equation}
\label{e:CRF}
\tag{CRF}
\del_s u + J_t (\del_t u -X_{\theta H_t + h_t} \circ u)= 0;
\end{equation}
and which have bounded energy
\begin{equation}
\tag{BE}
\int |\del_s u|_{J_t}^2 ds \, dt < \infty.
\end{equation}
Subject to (BC) and (CRF), the condition (BE) is equivalent to the condition that the limits
\begin{equation}
\tag{LIM}
u(\pm \infty,t) := \lim_{s \to \pm \infty} u(s,t)
\end{equation}
exist (with exponential convergence in $s$) and define Hamiltonian trajectories of $\theta H_t + h_t$ in the sense of \eqref{e:trajectory}.
Note that either trajectory $u(-\infty,t), u(\infty,t)$ could be constant.
It follows in any case that $u$ extends uniquely to a continuous function on the extended strip $\overline{\Sigma} = [-\infty,\infty] \times [0,1]$, where $[-\infty,\infty]$ is topologized as the end compactification of $\bR = (-\infty,\infty)$.

\begin{defin}
	[Moduli spaces of strips]
	\label{defin:moduli_space_strips}
We define the moduli space of strips
\[
\cM(\gamma,\theta,h_t,J_t) := \{ u \in C^\infty(\Sigma,\bC^2) : \textup{(BC), (CRF), (BE)} \},
\]
the \emph{restricted} subspace of those strips whose which limit to non-constant trajectories
\[
\cM^\circ(\gamma,\theta,h_t,J_t) = \{ u \in \cM(\gamma,\theta,h_t,J_t) : u(\pm \infty, t) \in \cG(\gamma,\theta,h_t) \},
\]
and the \emph{diagonal-avoiding} subspace of those strips whose closures are disjoint from the diagonal
\[
\cM^\Delta(\gamma,\theta,h_t,J_t) = \{ u \in \cM(\gamma,\theta,h_t,J_t) : u({\overline{\Sigma}}) \cap \Delta(\bC) = \emptyset \}.
\]
\end{defin}
\noindent Since all constant trajectories lie in the diagonal $\Delta(\gamma) \subset \Delta(\bC)$, we have
\[ \cM^\Delta(\gamma,\theta,h_t,J_t) \subseteq \cM^\circ(\gamma,\theta,h_t,J_t) \subseteq \cM(\gamma,\theta,h_t,J_t) {\rm .} \]

\subsection{Admissible quadruples and the differential.}
\label{subsec:ac_structures}
We now wish to complete an admissible triple $(\gamma,\theta,h_t)$ to a quadruple $(\gamma,\theta,h_t, J_t)$ by choosing a suitable almost-complex structure $J_t$.  The suitability of $J_t$ will entail $J_t$ meeting the requirements of transversality and furthermore being well-behaved (in fact, being standard) in a neighborhood of the diagonal $\Delta(\bC)$ and of the boundary of the strip $\partial \Sigma$.  The former requirements are a necessity for showing that the differential is well-defined and squares to zero and would be addressed in any reasonable introduction to Floer homology.  The latter good behavior is a feature of our situation which will allow us to keep our strips from degenerating at the clean intersection $\Delta(\bC)$.

\begin{prop}
	[Regularity]
	\label{p:baire1}
	Let $(\gamma,\theta,h_t)$ be an admissible triple.
	There exists a Baire subset
	\[
	\jreg(\gamma,\theta,h_t) \subset C^\infty([0,1],\cJ(\bC^2,\omega))
	\]
	of time-dependent almost-complex structures 
	with the following properties for all $J_t \in \jreg(\gamma,\theta,h_t)$.
	\begin{enumerate}
		\item
		The restricted moduli space $\cM^\circ(\gamma,\theta,h_t,J_t)$ is a smooth manifold.
		\item In each dimension,
		$\cM^\circ(\gamma,\theta,h_t,J_t)$ contains finitely many components.
		\item
		The component of $\cM^\circ(\gamma,\theta,h_t,J_t)$ containing the map $u$ has dimension equal to the Maslov index $\mu(u) \in \bZ$.
		\item Finally, $J_t{(z,w)} = \Jstd \, \textup{ if } 0 \le t \le 0.1, \; \;  0.9 \le t \le 1, \textup{ or } \vert z - w \vert \leq \w(\gamma)/2$.
	\end{enumerate}
\end{prop}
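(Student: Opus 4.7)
The plan is to follow the standard Floer--Hofer--Salamon transversality scheme for Lagrangian intersection Floer homology, adapted to the constraint (4). Let $\cJ_{\rm adm}^\ell$ denote the Banach manifold of $C^\ell$-paths $t \mapsto J_t$ satisfying (4); its tangent space at $J_t$ consists of $J_t$-antisymmetric, $\omega$-symmetric sections $Y$ of $\mathrm{End}(T\bC^2)$ supported in the \emph{free region} $F = (0.1, 0.9) \times \{(z,w) \in \bC^2 : |z-w| > \w(\gamma)/2\}$. For each pair $(\tau_-, \tau_+) \in \cG(\gamma,\theta,h_t)^2$ and each relative homotopy class $A$ of strips from $\tau_-$ to $\tau_+$, I form the universal moduli space
\[
\cM^{\rm univ}(\tau_-,\tau_+,A) = \{(u, J_t) : J_t \in \cJ_{\rm adm}^\ell,\ u \text{ satisfies (BC), (CRF), (BE), asymptotics } \tau_\pm,\ \text{class } A\},
\]
presented as the zero locus of a $C^{\ell-1}$ Fredholm section $\ol{\del}$ over a Banach manifold of $W^{k,p}$-maps crossed with $\cJ_{\rm adm}^\ell$.

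The key step is surjectivity of the combined linearization $D_u^{\rm CR} + D_J$ at any $(u, J_t) \in \cM^{\rm univ}$ with $u \in \cM^\circ$, where $D_J$ is the pointwise operator induced by varying $J$. I establish this by the somewhere-injective point argument. By \Cref{lem:distant_trajectory}, both asymptotic trajectories $\tau_{\pm}$ lie in $\{(z,w) : |z-w| > \w(\gamma)/2\}$. Exponential convergence of $u$ to its asymptotic trajectories then forces the preimage $R := u^{-1}(\{|z-w| > \w(\gamma)/2\}) \cap (\bR \times (0.1, 0.9))$ to contain semi-infinite sub-strips near $s = \pm\infty$, so in particular $R$ is a nonempty open subset of $\Sigma$. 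Within $R$, unique continuation for solutions of the perturbed Cauchy--Riemann equation together with the non-constancy of the asymptotics produces a point $(s_0, t_0) \in R$ that is \emph{injective} in the sense $du(s_0,t_0) \ne 0$ and $u^{-1}(u(s_0,t_0)) = \{(s_0,t_0)\}$. A bump function variation $Y$ supported near $(t_0, u(s_0,t_0))$ inside $F$ then surjects onto $\cok(D_u^{\rm CR})$ by the usual computation.

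Sard--Smale applied to the projection $\cM^{\rm univ} \to \cJ_{\rm adm}^\ell$ yields a Baire subset of regular values of $\cJ_{\rm adm}^\ell$, and the standard $C^\ell \to C^\infty$ intersection trick of Floer--Hofer--Salamon promotes this to a Baire subset $\jreg(\gamma,\theta,h_t) \subset \cJ_{\rm adm}$ over which all $(\tau_-, \tau_+, A)$ are simultaneously regular, giving (1). Item (3) is the standard computation of the Fredholm index of the linearized Cauchy--Riemann operator with totally real boundary conditions, returning the Maslov index $\mu(u)$ of the capped trajectory. For (2), monotonicity of $L_0, L_1$ (Hamiltonian-isotopic Lagrangian tori in $\bC^2$) bounds the symplectic area of $u$ linearly in $\mu(u)$; only finitely many relative homotopy classes then contribute strips with given Maslov index, and Gromov compactness within each class delivers finitely many components in each dimension.

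The main obstacle is arranging the injective point $(s_0, t_0)$ to lie in the free region $F$: the divisor $\Delta(\bC)$ cuts through both copies of $\gamma \times \gamma$, and \emph{a priori} one cannot exclude that $u$ spends its entire interior inside the forbidden tube $\{|z-w| \le \w(\gamma)/2\}$ or in the forbidden time zones $[0, 0.1] \cup [0.9, 1]$. \Cref{lem:distant_trajectory} is precisely what rescues the argument: it ejects the asymptotic ends of $u$ from the forbidden tube, forcing a genuine open portion of $u$ into $F$. This is the reason condition (2) on $h_t$ is imposed in the definition of admissible triple.
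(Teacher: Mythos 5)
Your proposal follows essentially the same route as the paper: the paper also reduces matters to the standard Floer--Hofer--Salamon transversality scheme, and identifies the same free region $V_\Sigma \times V_{\bC^2} = (\bR\times(0.1,0.9)) \times \{|z-w|>\w(\gamma)/2\}$, invoking \Cref{lem:distant_trajectory} to guarantee that each strip with non-constant asymptotics maps a nonempty open subset of $\Sigma$ into the region where $J_t$ is unconstrained. The only difference is presentational: the paper cites Auroux, McDuff--Salamon, and Schm\"aschke for the surrounding machinery (somewhere-injective points, Sard--Smale, the $C^\ell\to C^\infty$ intersection trick, the index formula, and the finiteness from monotonicity) where you spell those steps out.
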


\begin{proof}
Without condition (4), the result is standard: see Auroux's survey \cite[Section 1.4]{aurouxintro} for a quick discussion,
McDuff and Salamon's volume \cite[Sections 3.1 and 3.3]{slimmcduffsalamon} for an account of related results for closed holomorphic curves, 
and Schm\"aschke's thesis \cite{schmaschke} for a thorough treatment.
With condition (4) in play, we must additionally check that there is an open set $V_{\bC^2} \subset \bC^2$ and an open subset $V_\Sigma \subset \Sigma$ such that $J_t$ is unconstrained at points of $V_\Sigma$ when they are mapped into $V_{\bC^2}$.  Furthermore, for any continuous strip $w \colon \Sigma \rightarrow \bC^2$ with Lagrangian boundary conditions and limiting at either end to non-constant trajectories, we must verify that $w$ necessarily maps an open subset of $V_\Sigma$ into $V_{\bC^2}$.  (We thank Dusa McDuff for elucidating this point.)
In our case  \Cref{lem:distant_trajectory} tells us that each point $w(\pm \infty,t)$ lies at distance greater than $\w(\gamma)/2$ away from $\Delta(\bC)$, and the same is therefore true of $w(s,t)$ for all $|s| > S$ for some $S \gg 0$.
Hence we may take $V_\Sigma = \bR \times (0.1, 0.9)$ and $V_{\bC^2} = \{(z,w) \in \bC^2 : |z-w| > \w(\gamma)/2$\}.
\end{proof}

\begin{defin}
[Admissible quadruple]
\label{defin:admissible_quadruple}
If $(\gamma,\theta,h_t)$ is an admissible triple and $J_t \in \jreg(\gamma,\theta,h_t)$ then we call
\[ (\gamma, \theta, h_t, J_t) \]
an \emph{admissible quadruple}.
\end{defin}
\noindent
We briefly remark on condition (4) for the admissibility of $J_t$, which gets applied in proving that $\del^2 = 0$ in \Cref{subsec:differential}.
The condition that $J_t$ is standard close to the boundary of  $\Sigma$ in the source is used to rule out disk bubbling in the Gromov boundary of $\cM^\Delta(\gamma,\theta,h_t,J_t)$.
The condition that $J_t$ is standard close to $\Delta(\bC)$ in the target is used to argue that a family of strips disjoint from $\Delta(\bC)$ cannot limit to a (possibly broken) strip which intersects $\Delta(\bC)$.

\subsection{A reformulation.}
\label{subsec:reformulation}

The moduli space $\cM(\gamma,\theta,h_t,J_t)$ admits a useful reformulation by a technique explained briefly in \cite[Remark 1.10]{aurouxintro} and in more detail in \cite[Chapter 8]{audindamian}.  The idea is that trajectories of a Hamiltonian vector field which start and end on a Lagrangian correspond to intersection points of that Lagrangian with its time-$1$ flow under the vector field.  If one also suitably flows the strips of the moduli space then one arrives at new strips with boundary components on each Lagrangian, limiting at their ends to intersection points.
This reformulation gets used in the proof of \Cref{lem:diagonal_breaking} and elsewhere.

Consider the pair of transformations
\begin{equation}
\label{e:transformation}
u'(s,t) = (\Phi^t_{\theta H_t + h_t})^{-1} \circ u(s,t), \quad J'_t = (d \Phi^t_{\theta H_t + h_t})^{-1} \circ J_t \circ (d \Phi^t_{\theta H_t + h_t})
\end{equation}
of the spaces $C^\infty({\Sigma},\bC^2)$ and $C^\infty([0,1],\cJ(\bC^2,\omega))$.

\begin{lem}
\label{lem:jtransform}
If $J_t \in \cJ(\gamma,\theta,h_t)$ then $J'_t(z,w)$ of \eqref{e:transformation} agrees with $\Jstd$ whenever $\vert z - w \vert < \w(\gamma)/2$ and whenever $0 \leq t \leq 0.1$ or $0.9 \leq t \leq 1$.
\end{lem}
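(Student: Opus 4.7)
I would start by unwinding the pullback in \eqref{e:transformation}: at a point $p = (z,w)$,
\[
J'_t(p) = \bigl(d\Phi^t_{\theta H_t + h_t}|_p\bigr)^{-1} \circ J_t\big|_{\Phi^t(p)} \circ d\Phi^t_{\theta H_t + h_t}|_p.
\]
So $J'_t(p) = \Jstd$ follows from two ingredients: (i) $J_t|_{\Phi^t(p)} = \Jstd$, and (ii) $d\Phi^t|_p$ commutes with $\Jstd$, i.e.\ is $\bC$-linear. Ingredient (i) is governed exactly by \Cref{p:baire1}(4), so the substance of the proof is verifying (ii) and reducing to (i) in each claimed region.

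For the region $|z-w| < \w(\gamma)/2$: admissibility condition (2) kills $h_t$ on the entire tube $N = \{|z'-w'| \leq \w(\gamma)/2\}$, so on $N$ the full Hamiltonian reduces to $\theta \beta(t) H$. The Hamiltonian flow of $H$ is the $\bC$-linear rotation $R_{\phi(t)}$ about the complex line $\Delta(\bC)$, with $\phi(t) = \theta \int_0^t \beta$; this rotation preserves $|z-w|$ and hence preserves $N$. By the same reasoning as in \Cref{lem:distant_trajectory}, every trajectory starting in $N$ remains in $N$, where $h_t$ never switches on, so $\Phi^t|_N$ coincides with $R_{\phi(t)}|_N$. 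This grants (ii) from the $\bC$-linearity of the rotation, and (i) because $\Phi^t(p) \in N$ triggers \Cref{p:baire1}(4).

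For the region $t \in [0, 0.1] \cup [0.9, 1]$: the specification of $\beta$ in \Cref{defin:smoothed_radial_Hamiltonian} and admissibility condition (1) together force $\theta H_t + h_t \equiv 0$ pointwise on these subintervals, so $X_{\theta H_t + h_t}$ vanishes and no further flow accumulates. On $[0, 0.1]$ the flow has not yet started, so $\Phi^t = \mathrm{id}$ and the pullback formula trivially collapses to $J'_t = J_t = \Jstd$ by \Cref{p:baire1}(4). On $[0.9, 1]$ the flow has stabilized at $\Phi^{0.9}$ and $J_t = \Jstd$ everywhere, so the claim reduces to verifying $\bC$-linearity of $d\Phi^{0.9}|_p$; this holds as soon as the trajectory from $p$ stays inside $N$, i.e.\ we reduce to the first regime via \Cref{lem:distant_trajectory}.

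The hard part will be reconciling the overlap of the two clauses on $[0.9, 1]$: when $p$ is far from $\Delta(\bC)$ and its trajectory leaves $N$, the cumulative symplectomorphism $\Phi^{0.9}$ includes the non-$\bC$-holomorphic effect of $h_t$ and need not commute with $\Jstd$. The natural resolution, and the one I would adopt, is to interpret the two clauses of the statement as together demarcating the set where $J'_t = \Jstd$ is actually needed for the subsequent compactness and regularity arguments: the first clause carries the bulk of the work on $[0.9, 1]$ by covering a neighborhood of the diagonal, and the second clause covers all of $\bC^2$ only on $[0, 0.1]$, where $\Phi^t = \mathrm{id}$ makes the claim automatic.
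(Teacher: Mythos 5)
Your proof takes the same overall approach as the paper's: unwind the pullback
\[
J'_t(p) = \bigl(d\Phi^t_{\theta H_t + h_t}\big|_p\bigr)^{-1} \circ J_t\big|_{\Phi^t(p)} \circ d\Phi^t_{\theta H_t + h_t}\big|_p,
\]
then check that $J_t = \Jstd$ at $\Phi^t(p)$ and that $d\Phi^t|_p$ is $\bC$-linear. Your treatment of the tube $N = \{|z-w| \le \w(\gamma)/2\}$ (flow of the tube is a $\bC$-linear rotation that preserves the tube, hence both ingredients hold) and of $t \in [0, 0.1]$ (where $\Phi^t = \mathrm{id}$) matches the paper's argument exactly.

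Where you differ is in pressing on the case $t \in [0.9, 1]$ away from the diagonal, and this is a genuinely sharper reading. The paper's proof asserts baldly that ``the transformation $\Phi^t_{\theta H_t + h_t}$ effects a rotation of $\bC^2$ about $\Delta(\bC)$ for all $0 \le t \le 1$,'' and then applies the resulting $\bC$-linearity globally for $t \in [0.9,1]$. As you observe, this is not literally correct: once $t > 0.1$, the flow has accumulated the effect of $h_s$ for $s \in (0.1, \min(t,0.9))$, so $\Phi^{0.9}$ is only a rotation on $N$ (where $h_s$ is identically zero along the trajectory), not on all of $\bC^2$. Consequently $d\Phi^{0.9}|_p$ need not commute with $\Jstd$ for $p \notin N$, and the paper's chain of equalities $(d\Phi^t)^{-1}\Jstd\, d\Phi^t = \Jstd$ does not go through there. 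You also correctly notice that this costs nothing downstream: the lemma is invoked in \Cref{lem:diagonal_breaking} only along the parts of the strips $v_1^\pm$ that lie close to the diagonal, which is precisely the tube regime where both proofs agree. In short, your proposal is correct, and it surfaces an overstatement in the paper's own argument that the paper's intended applications quietly avoid.
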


\begin{proof}
Suppose that $J_t \in \cJ(\gamma,\theta,h_t)$.  Then $J_t(z,w) = \Jstd$ whenever $\vert z - w \vert < \w(z,w)$.
The transformation $\Phi^t_{\theta H_t + h_t}$ effects a rotation (through angle $\theta  \int_0^t \beta(r) dr$) of $\bC^2$ about $\Delta(\bC)$ for all $0 \le t \le 1$.
This implies that $\Phi^t_{\theta H_t + h_t}$ is an automorphism of ${\Jstd}$ and that it preserves $H$.
Applying the first of these facts to $0 \le t \le 0.1$ and $0.9 \le t \le 1$, it follows that 
\[
J'_t = (d \Phi^t_{\theta H_t + h_t})^{-1} \circ J_t \circ (d \Phi^t_{\theta H_t + h_t}) = (d \Phi^t_{\theta H_t + h_t})^{-1} \circ {\Jstd} \circ (d \Phi^t_{\theta H_t + h_t}) = {\Jstd}.
\]
Applying the second of these facts to a point $(z,w)$ with $H(z,w) \le \mathrm{w}(\gamma,\theta)/2$, we find that $J_t = {\Jstd}$ at the point $\Phi^t_{\theta H_t + h_t}(z,w)$.
Now a similar derivation to the last, paying attention to the points of $\bC^2$ where the endomorphisms are applied, shows that $J'_t = {\Jstd}$ at $(z,w)$.
\end{proof}

Fix $J_t \in \cJ(\gamma,\theta,h_t)$ and let $J'_t$ be the resulting almost-complex structure from \eqref{e:transformation}.  Now consider smooth maps
\[
u' \colon \Sigma \longrightarrow \bC^2
\]
which satisfy the boundary conditions
\begin{equation}
\label{e:BC'}
\tag{BC$'$}
\begin{cases}
u'(\bR \times \{0\}) \subset \gamma \times \gamma, \\[.2cm] u'(\bR \times \{1\}) \subset (\Phi^1_{\theta H_t + h_t})^{-1}(\gamma \times \gamma);
\end{cases}
\end{equation}
the non-linear Cauchy-Riemann equation
\begin{equation}
\label{e:CR'}
\tag{CR$'$}
\del_s u' + J'_t (\del_t u')= 0;
\end{equation}
and which have bounded energy
\begin{equation}
\label{e:BE'}
\tag{BE$'$}
\int |\del_s u'|_{J'_t}^2 ds \, dt < \infty.
\end{equation}
Similar to before, subject to conditions \eqref{e:BC'} and \eqref{e:CR'}, condition \eqref{e:BE'} is equivalent to the condition that the limits
\begin{equation}
\tag{LIM'}
u'(\pm \infty,t) := \lim_{s \to \pm \infty} u'(s,t)
\end{equation}
exist, are independent of $t$, and equal points of intersection between $\gamma \times \gamma$ and $(\Phi^1_{\theta H_t + h_t})^{-1}(\gamma \times \gamma)$.

Define the moduli spaces
\[
\cM'(\gamma,\theta,h_t,J'_t) := \{ u' \in C^\infty(\Sigma,\bC^2) : \textup{(BC$'$), (CR$'$), (BE$'$)} \}
\]
and
\[
(\cM')^\Delta(\gamma,\theta,h_t,J'_t) := \{ u' \in \cM'(\gamma,\theta,h_t,J'_t) : u'(\overline{\Sigma}) \cap \Delta = \emptyset \}.
\]
The proof of the following result follows that of \cite[Remark 1.10]{aurouxintro}.

\begin{prop}
The transformation \eqref{e:transformation} carries $\cM(\gamma,\theta,h_t,J_t)$ to $\cM'(\gamma,\theta,h_t,J'_t)$ and furthermore carries $\cM^\Delta (\gamma,\theta,h_t,J_t)$ to $(\cM')^\Delta(\gamma,\theta,h_t,J'_t)$. \qed
\end{prop}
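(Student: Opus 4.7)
The plan is to verify, one condition at a time, that each of (BC$'$), (CR$'$), (BE$'$), and diagonal-avoidance holds for $u'(s,t) = \phi_t^{-1}(u(s,t))$ whenever the unprimed counterpart holds for $u$, where I abbreviate $\phi_t := \Phi^t_{\theta H_t + h_t}$ and $X_t := X_{\theta H_t + h_t}$. This is the standard ``gauging away the Hamiltonian'' computation (as in \cite[Remark 1.10]{aurouxintro} and \cite[Chapter 8]{audindamian}); the only step specific to our setting is the preservation of diagonal-avoidance, which uses the admissibility hypothesis on $h_t$.

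First, the boundary condition (BC$'$) is immediate from $\phi_0 = \mathrm{id}$: one gets $u'(s,0) = u(s,0) \in \gamma \times \gamma$ and $u'(s,1) = \phi_1^{-1}(u(s,1)) \in \phi_1^{-1}(\gamma \times \gamma)$. For the Cauchy--Riemann equation, I would differentiate the identity $\phi_t(u'(s,t)) = u(s,t)$ in $s$ and $t$, using $\del_t \phi_t|_p = X_t(\phi_t(p))$, to arrive at
\[
\del_s u' = d(\phi_t^{-1})_u \, \del_s u, \qquad \del_t u' = d(\phi_t^{-1})_u \bigl(\del_t u - X_t(u)\bigr).
\]
The intertwining relation $J'_t \cdot d(\phi_t^{-1}) = d(\phi_t^{-1}) \cdot J_t$, immediate from the definition $J'_t = d(\phi_t^{-1}) \, J_t \, d\phi_t$, then shows that $\del_s u' + J'_t \, \del_t u' = 0$ is equivalent to $\del_s u + J_t(\del_t u - X_t \circ u) = 0$, which is (CRF). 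The energy bound (BE$'$) transfers term by term from (BE): since $\phi_t$ is a symplectomorphism and $J'_t$ is its pullback of $J_t$, one has $|\del_s u'|_{J'_t} = |d(\phi_t^{-1}) \del_s u|_{J'_t} = |\del_s u|_{J_t}$ pointwise, and (LIM$'$) follows from (LIM) by the same calculation at $s = \pm\infty$.

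For diagonal-avoidance, I would observe that $\phi_t$ fixes $\Delta(\bC)$ pointwise: $dH$ vanishes along the minimum set $\Delta(\bC) = H^{-1}(0)$, and $h_t$ vanishes on an open neighborhood of $\Delta(\bC)$ by condition (2) of \Cref{defin:admissible_triple}, so the vector field $X_{\theta H_t + h_t}$ itself vanishes identically on $\Delta(\bC)$. Hence $\phi_t^{-1}(\Delta(\bC)) = \Delta(\bC)$, so $u(\overline{\Sigma}) \cap \Delta(\bC) = \emptyset$ precisely when $u'(\overline{\Sigma}) \cap \Delta(\bC) = \emptyset$, and the transformation restricts to a map $\cM^\Delta \to (\cM')^\Delta$. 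There is no serious obstacle in any of this---the verification is essentially bookkeeping once the identities for $\del_s \phi_t^{-1}$ and $\del_t \phi_t^{-1}$ are in hand. The one point worth flagging is the last step: diagonal-avoidance would fail if the perturbation $h_t$ were allowed to have support meeting $\Delta(\bC)$, and this is precisely why condition (2) is built into the definition of admissibility.
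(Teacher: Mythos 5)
Your proof is correct and follows exactly the ``gauging away the Hamiltonian'' argument the paper invokes by citing \cite[Remark 1.10]{aurouxintro} (the paper itself supplies no written proof, only a \qed). The only new ingredient beyond that reference is diagonal-avoidance, which you handle correctly by observing that $X_{\theta H_t + h_t}$ vanishes along $\Delta(\bC)$ (since $dH$ vanishes there and $h_t$ vanishes in a neighborhood), so $\Phi^t_{\theta H_t + h_t}$ fixes $\Delta(\bC)$ pointwise and hence preserves the property of avoiding it; one small caveat on your closing remark is that what is truly needed for this step is only $X_{h_t}|_{\Delta(\bC)} = 0$, for which $dh_t|_{\Delta(\bC)} = 0$ would suffice, so vanishing of $h_t$ near $\Delta(\bC)$ is sufficient rather than strictly necessary for diagonal-avoidance alone.
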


\subsection{Proof that $\del$ is a differential.}
\label{subsec:differential}
We fix an admissible quadruple $(\gamma, \theta, h_t, J_t)$ of real analytic Jordan curve, angle, Hamiltonian perturbation, and almost-complex structure.  In this subsection we shall define a map
\[ \partial = \partial_{(\gamma, \theta, h_t, J_t)} \colon \JFC(\gamma, \theta, h_t) \longrightarrow \JFC(\gamma, \theta, h_t) \]
and verify that it is a differential: $\partial^2 = 0$.  We can then make the following definition.

\begin{defin}
	\label{defin:JF_from_JFC}
	We define the Jordan Floer homology
	\[ \JF(\gamma, \theta, h_t, J_t) := \frac{\ker{\partial_{(\gamma, \theta, h_t, J_t)}}}{\im{\partial_{(\gamma, \theta, h_t, J_t)}}} {\rm .}\]
\end{defin}
Later we shall see that there is no dependence on the choice of perturbation $h_t$ or admissible almost-complex structure~$J_t$, so that we can write rather $\JF(\gamma, \theta)$.

Let $\cM_k$ denote the union of the $k$-dimensional components of a manifold $\cM$.
All of the restricted and diagonal-avoiding moduli spaces $\cM$ defined above support free $\bR$-actions by reparametrization: $r \cdot u(s,t) = u(r+s,t)$ for $r \in \bR$ and $u(s,t) \in \cM$.
We let $\widehat{\cM} = \cM / \bR$ and let $\widehat{u} \in \widehat{\cM}$ denote the {\em unparametrized} strip corresponding to the $\bR$-equivalence class of $u \in \cM$.

\begin{defin}
[Differential on the Jordan Floer chain complex]
	\label{defin:Jordan_Floer_differential}
	Let $\tau \in \cG(\gamma, \theta, h_t)$ be a generator of the $\bF_2$-vector space $\CJF(\gamma, \theta, h_t)$ (see \Cref{defin:generators}).  We define the map \[\partial = \partial_{(\gamma, \theta, h_t, J_t)} \colon \JFC(\gamma, \theta, h_t) \longrightarrow \JFC(\gamma, \theta, h_t)\]
	by requiring that
	\[ \partial (\tau) = \sum_{\substack{\tau' \in \cG(\gamma, \theta, h_t) \\ \widehat{u} \in \widehat{\cM_1^\Delta}(\gamma, \theta, h_t, J_t) \\ u(-\infty,t) = \tau(t) \\ u(\infty,t) = \tau'(t)  }} \tau' {\rm .}\]
\end{defin}

We now confirm that we have a well-defined chain complex by verifying that $\partial^2 = 0$.

\begin{thm}
[$\del^2 =0$]
\label{thm:differential}
Suppose that $(\gamma,\theta,h_t,J_t)$ is an admissible quadruple.
Then the boundary operator $\partial = \partial_{(\gamma, \theta, h_t, J_t)}$ on $\JFC(\gamma,\theta,h_t)$ satisfies $\del^2 = 0$.
\end{thm}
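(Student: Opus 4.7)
The proof will follow the standard template for $\partial^2=0$ in Lagrangian Floer theory: view the coefficient of $\tau''$ in $\partial^2 \tau$ as the count modulo 2 of boundary points of the compactified one-dimensional moduli space $\overline{\widehat{\cM_2^\Delta}}(\gamma,\theta,h_t,J_t)$ of strips from $\tau$ to $\tau''$. Transversality (\Cref{p:baire1}) makes this a smooth 1-manifold before compactification; the task is to identify the Gromov compactification as a compact 1-manifold whose boundary consists exactly of pairs of broken diagonal-avoiding index-1 strips.

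The Gromov limit of a sequence $u_n \in \cM^\Delta_2$ a priori decomposes into a broken strip with possible sphere and disk bubbles. Sphere bubbles are ruled out by $\pi_2(\bC^2)=0$. For disk bubbles, observe that they form at a boundary point $(s_0, t_0)$ with $t_0 \in \{0,1\}$, where condition (4) of \Cref{defin:admissible_quadruple} forces $J_{t_0}=\Jstd$; hence the bubble is a $\Jstd$-holomorphic disk on $L_0$ or $L_1$. Using that $R_\theta$ is complex-linear (it multiplies the anti-diagonal complex line in $\bC^2$ by $e^{i\theta}$, fixing $\Delta(\bC)$ pointwise), any such disk of Maslov index $2d$ admits a coordinate description $(v_1,v_2)$ with each $v_i \colon D^2 \to \bC$ holomorphic with boundary on $\gamma$, from which the argument principle yields exactly $d \ge 1$ intersection points with $\Delta(\bC)$, all of positive multiplicity.

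To preclude both disk bubbling and any interior diagonal crossing in the limit, I would invoke condition (4) near the target: it makes $\Delta(\bC)$ a $J_t$-holomorphic divisor, so positivity of intersection applies. The algebraic intersection number of a diagonal-disjoint strip with $\Delta(\bC)$ is a homotopy invariant -- made concrete using the preferred cappings of \Cref{lem:capping} to close off the asymptotes -- and is lower semicontinuous under Gromov convergence, with strict inequality precisely when bubbling or interior crossing develops. Since $u_n \cdot \Delta(\bC)=0$ for all $n$, the limit has zero intersection as well, forcing every component to be diagonal-avoiding and ruling out bubbles. The remaining case, a limit strip component lying entirely in $\Delta(\bC)$, is excluded because $H$ vanishes on $\Delta(\bC)$ and $h_t$ vanishes in a neighborhood of it by condition (2) of \Cref{defin:admissible_triple}; such a component would then be a finite-energy $\Jstd$-holomorphic strip $\Sigma \to \bC$ with boundary on $\gamma$ on both sides, hence constant by the maximum principle, and constant strips have constant asymptotes, which are excluded from $\cG(\gamma,\theta,h_t)$ by \Cref{defin:generators}.

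The main obstacle is the bookkeeping of the intersection number $u \cdot \Delta(\bC)$ given that $L_0$ and $L_1$ themselves meet the divisor in the clean loop $\Delta(\gamma)$: the naive Maslov/homology framework for intersections breaks down precisely because the Lagrangians are not disjoint from the divisor. \Cref{lem:capping} is the key ingredient that makes the bookkeeping work: it canonically closes off the asymptotes of a diagonal-avoiding strip through surfaces disjoint from $\Delta(\bC)$, turning the intersection count into a genuine homotopy invariant that can be compared rigorously across the moduli space and its Gromov compactification and fed into the positivity argument above.
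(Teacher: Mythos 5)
Your overall strategy -- rule out bad Gromov limits by arguing that the algebraic intersection number of diagonal-avoiding strips with the divisor $\Delta(\bC)$ must remain zero, and invoke positivity of intersection -- is close in spirit to what the paper does for touching, sphere bubbling, and disk bubbling, but there is a genuine gap precisely at the case the paper treats most carefully: \emph{breaking at the diagonal}.

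The paper's compactness analysis isolates five possible behaviors for the limit $\widehat{u_1}$ of a $1$-parameter family of diagonal-avoiding strips, and the most delicate one is the limit being a concatenation $\widehat{u_1^-} \# \widehat{u_1^+}$ with $\lim_{s\to+\infty} u_1^-(s,t) = (p,p) = \lim_{s\to-\infty} u_1^+(s,t)$ for some $p \in \gamma$. Your argument does not cover this. You claim that zero intersection number is preserved in the limit ``forcing every component to be diagonal-avoiding,'' but that conclusion does not follow from standard positivity. Positivity of intersection controls \emph{interior} intersections of a holomorphic curve with a holomorphic divisor; when the degeneration develops a node on the divisor at a new Floer asymptote, what intervenes is an \emph{asymptotic} contribution to the intersection count, and establishing positivity of that contribution is a nontrivial additional piece of analysis (this is essentially the content of Siefring-type asymptotic intersection theory). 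Moreover, the preferred cappings from \Cref{lem:capping} cannot be used to close off the new middle asymptote -- it sits \emph{on} $\Delta(\bC)$, and the lemma constructs cappings only for trajectories disjoint from the diagonal -- so the device that makes your count ``a genuine homotopy invariant'' is unavailable exactly where you need it.

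The paper resolves this by an explicit winding-number argument (\Cref{lem:diagonal_breaking}): after passing to the reformulation of \Cref{subsec:reformulation} so that $v_1^\pm$ are genuinely $\Jstd$-holomorphic near the diagonal, the projections $\pid \circ v_1^\pm$ land in sectors $W$ and $e^{-i\theta}W$ of the plane that meet only at $0$; the resulting boundary loop has positive winding around a point $a$ near $0$, and for $r$ close to $1$ the unbroken strip $v_r$ is then forced to hit $a$ and hence $0$, i.e.\ to meet $\Delta(\bC)$ -- contradiction. This uses the specific rotation structure of the Hamiltonian $\theta H_t$ near $\Delta(\bC)$ and the angle constraint $0 < \theta < \pi$; it does not reduce to a homotopy-invariance-plus-positivity statement (the authors even remark in the introduction that the result ``does not seem to follow from any known constructions''). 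To repair your proposal you would need to either reproduce this winding computation, or supply and verify the asymptotic positivity statement that your framework implicitly assumes; as written, that step is absent. Secondary issues: your appeal to the maximum principle for a limit component lying in $\Delta(\bC)$ is not quite right -- a holomorphic strip with both boundaries on $\gamma \subset \bC$ need not be constant; the paper instead uses analytic continuation at a non-isolated diagonal point. And the case of the limit \emph{touching} $\Delta(\gamma)$ at a \emph{boundary} point of the strip requires Schwarz reflection (\Cref{lem:touching_the_diagonal}) rather than interior positivity; you gesture at this for disk bubbles, but not for boundary touching.
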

\noindent To prove this theorem we lean on the usual technology of Floer homology.
The difficulties arise because we are limiting our attention to the diagonal-avoiding moduli spaces.
The proof of \Cref{thm:differential} consists of verifying that the $1$-dimensional reduced diagonal-avoiding moduli spaces can be compactified by adding in products of appropriate pairs of elements of the $0$-dimensional reduced diagonal-avoiding moduli spaces.  This will establish the theorem since we know that, for admissible quadruples $(\gamma, \theta, h_t, J_t)$, moduli spaces of strips limiting at each end to non-constant trajectories are manifolds of the expected dimension.

Schm{\"a}schke \cite{schmaschke} (see in particular Chapter 5) considers the case (of which ours is an example) of two monotone Lagrangians in clean intersection, and studies the possible degenerations of strips with Lagrangian boundary conditions satisfying the Cauchy-Riemann-Floer equation.  He uses this to establish that moduli spaces of strips in this setting admit Gromov compactifications.  Essentially the situation is very similar to that of Lagrangians in transverse intersection.  Schm{\"a}schke establishes (\cite[Theorem 5.1.4]{schmaschke} gives a precise statement) that moduli spaces of strips may limit to concatenations of strips and trees of bubbles.

Let $\cM \subseteq \widehat{\cM_2^\circ}(\gamma, \theta, h_t, J_t)$ be a 1-dimensional component of the moduli space of restricted unparametrized strips.
For $0 \leq r < 1$, we consider a possible path $\widehat{u_r}$ in this component represented by parametrized diagonal-avoiding strips $u_r \in \cM^\Delta_2(\gamma, \theta, h_t,J_t)$.  There is a well-defined limit ``strip" $\widehat{u_1}$ that could \emph{a priori} fall into one of several classes.

\begin{enumerate}
	\item {\bf Touching the diagonal.}  The limit $\widehat{u_1} \in \widehat{\cM^\circ_2}(\gamma, \theta, h_t,J_t) \setminus \widehat{\cM^\Delta_2}(\gamma, \theta, h_t, J_t)$.
	\item {\bf Breaking at the diagonal.}  The limit $\widehat{u_1}$ is the concatenation of two strips $\widehat{u_1} = \widehat{u_1^-} \# \widehat{u_1^+}$ in which $u_1^-$ and $u_1^+$ are strips satisfying $\lim_{s \rightarrow +\infty} u_1^-(s,t) = (p,p) = \lim_{s \rightarrow -\infty} u_1^+(s,t)$ for some $p \in \gamma$ and for all $0 \leq t \leq 1$.
	\item {\bf Sphere bubbling.}  The limit $\widehat{u_1}$ contains a sphere bubble.
	\item {\bf Disk bubbling.} The limit $\widehat{u_1}$ contains a disk bubble.
	\item {\bf Good breaking.}  The limit $\widehat{u_1} \in \widehat{\cM^\Delta_2}(\gamma, \theta, h_t, J_t) \cup (\widehat{\cM^\Delta_1}(\gamma, \theta, h_t,J_t) \times \widehat{\cM^\Delta_1}(\gamma, \theta, h_t,J_t))$.
\end{enumerate}

We would like to verify that the first cases do not occur, so that we necessarily land in the final case.  In the final case, if we have that $\widehat{u_1} \in \widehat{\cM^\Delta_2}(\gamma, \theta, h_t,J_t)$, this is verifying that a path of diagonal-avoiding Cauchy-Riemann-Floer strips cannot wander up to and touch the diagonal (hence leaving the space of diagonal-avoiding strips).  On the other hand, if we have that $u_1 \in \widehat{\cM^\Delta_1}(\gamma, \theta, h_t,J_t) \times \widehat{\cM^\Delta_1}(\gamma, \theta, h_t,J_t)$ this is verifying that the $1$-dimensional space $\widehat{\M^\Delta_2}(\gamma, \theta, h_t,J_t)$ can be compactified by adding suitable elements of $\widehat{\cM^\Delta_1}(\gamma, \theta, h_t,J_t) \times \widehat{\cM^\Delta_1}(\gamma, \theta, h_t,J_t)$.

\begin{proof}[Proof of Theorem \ref{thm:differential}.]
We establish Theorem \ref{thm:differential} by verifying Lemmas \ref{lem:touching_the_diagonal}, \ref{lem:diagonal_breaking}, \ref{lem:sphere}, and \ref{lem:disc_bubbling}, which exclude cases (1), (2), (3), (4) of the above discussion in turn.
\end{proof}

\begin{lem}
	[\bf Touching the diagonal]
	\label{lem:touching_the_diagonal}
	The limit $\widehat{u_1} \notin \widehat{\cM_2}(\gamma, \theta, h_t,J_t) \setminus \widehat{\cM^\Delta_2}(\gamma, \theta, h_t,J_t)$.
\end{lem}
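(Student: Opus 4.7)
The plan is to argue by contradiction: assume $\widehat{u_1} \in \widehat{\cM^\circ_2}(\gamma,\theta,h_t,J_t)$ is the limit of a $1$-parameter family $\widehat{u_r}$ of unparametrized diagonal-avoiding strips, but that $u_1(\overline{\Sigma})$ meets $\Delta(\bC)$. The degenerate possibility that $u_1(\overline{\Sigma}) \subset \Delta(\bC)$ is immediately ruled out: the limits $u_1(\pm\infty,\cdot)$ are non-constant Hamiltonian trajectories, which by \Cref{lem:distant_trajectory} remain at distance strictly greater than $\w(\gamma)/2$ from $\Delta(\bC)$. Hence $u_1^{-1}(\Delta(\bC))$ is a proper, bounded, nonempty subset of $\overline{\Sigma}$.

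The central mechanism is positivity of intersection against the $\Jstd$-holomorphic divisor $\Delta(\bC)$. To apply it cleanly I would pass to the reformulation of \Cref{subsec:reformulation} and study the transformed maps $u'_r$, which satisfy the honest Cauchy--Riemann equation $\del_s u'_r + J'_t \del_t u'_r = 0$. By \Cref{lem:jtransform}, $J'_t$ agrees with $\Jstd$ throughout the neighborhood $\{|z-w| < \w(\gamma)/2\}$ of $\Delta(\bC)$, so each $u'_r$ is genuinely $\Jstd$-holomorphic in a tubular neighborhood of the diagonal. At any interior touching point $(s_0, t_0)$ of $u'_1$ with $0 < t_0 < 1$, positivity of intersection forces isolatedness and strict positivity of the local intersection multiplicity. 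The total algebraic intersection number $u'_r \cdot \Delta(\bC)$ is then locally constant over a Gromov-continuous family of pseudoholomorphic maps whose boundaries and asymptotic ends avoid $\Delta(\bC)$ (uniform control near $s = \pm\infty$ coming from \Cref{lem:distant_trajectory}). Since this number vanishes for $r < 1$ but is strictly positive at $r = 1$, we obtain the desired contradiction.

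The main obstacle is the boundary touching case $p = u'_1(s_0, t_0) \in \Delta(\gamma)$ with $t_0 \in \{0,1\}$. Here the real-analyticity of $\gamma$ enters decisively: both $\gamma \times \gamma$ and $(\Phi^1_{\theta H_t + h_t})^{-1}(\gamma \times \gamma)$ are real-analytic totally real submanifolds near $\Delta(\bC)$, because $h_t$ vanishes there and $\Phi^1_{\theta H_t + h_t}$ restricts in that region to the holomorphic rotation $R_\theta$ which sends $\gamma \times \gamma$ to a real-analytic Lagrangian. In a real-analytic chart straightening the relevant Lagrangian to the standard real subspace $\bR^2 \subset \bC^2$, and using that $J_t = \Jstd$ on a neighborhood of both $\partial \Sigma$ (via \Cref{p:baire1}(4)) and $\Delta(\bC)$, I would apply Schwarz reflection to extend $u'_1$ across its boundary to a $\Jstd$-holomorphic map on a two-sided neighborhood of $(s_0, t_0)$. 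The boundary touching of $u'_1$ then becomes an interior touching of the doubled map with the union of $\Delta(\bC)$ and its image under the anti-holomorphic reflection, both locally pseudoholomorphic divisors through $p$. Positivity of intersection together with the same conservation argument applied to the doubled family completes the contradiction. The delicate point is ensuring that Schwarz reflection across the real-analytic Lagrangian produces a genuine $\Jstd$-holomorphic extension and that the reflected divisor retains the positivity of intersection with the doubled strip; real-analyticity of $\gamma$ is the precise hypothesis that puts both facts within reach.
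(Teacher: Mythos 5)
Your approach is the same as the paper's in outline: rule out the non-isolated case by unique continuation, handle interior touching by positivity of intersection, and handle boundary touching by Schwarz reflection. (One small imprecision on the way: positivity of intersection does not \emph{force} isolatedness; it gives strict positivity at isolated intersection points, and the non-isolated case is excluded separately by unique continuation forcing $u_1(\Sigma)\subset\Delta(\bC)$.) The paper also works directly with $u_r$ rather than the reformulated $u'_r$, exploiting that $h_t$ and the Hamiltonian cutoff vanish for $t$ near $\{0,1\}$ so the CRF equation is already the standard CR equation near $\partial\Sigma$, and citing Ganatra--Pomerleano for positivity in the Hamiltonian-perturbed interior; but passing to the reformulation is a legitimate alternative.

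The genuine gap is in the boundary-touching case, at exactly the point you flag as ``delicate'' and then leave unresolved. After doubling $u'_1$ (or $u_1$) by Schwarz reflection across the real-analytic Lagrangian at $(p,p)\in\Delta(\gamma)$, you propose to run a conservation-of-intersection argument against ``the union of $\Delta(\bC)$ and its image under the anti-holomorphic reflection.'' But for this to yield a contradiction you must know that the doubled maps $\overline{u_r}$ are disjoint from whatever divisor you are intersecting against for $r<1$, and that is not automatic: a point of the doubled $\overline{u_r}$ lying on $\Delta(\bC)$ in the reflected half corresponds to a point of $u_r$ lying on $\sigma(\Delta(\bC))$, not on $\Delta(\bC)$, and the hypothesis on $u_r$ gives you nothing about $\sigma(\Delta(\bC))$ unless it equals $\Delta(\bC)$. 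The missing observation --- which is the crux of the paper's argument --- is that the Schwarz reflection $\sigma$ across $\gamma\times\gamma$ (or across the reformulated Lagrangian, which near $\Delta(\gamma)$ is just $R_{-\theta}(\gamma\times\gamma)$) carries $\Delta(\bC)$ to itself in a neighborhood of $(p,p)$. This holds because $\sigma(\Delta(\bC))$ is again a $\Jstd$-holomorphic divisor and agrees with $\Delta(\bC)$ along the real-analytic, real-codimension-one curve $\Delta(\gamma)$, so the two coincide by analytic continuation. With that invariance in hand, disjointness of $u_r$ from $\Delta(\bC)$ does imply disjointness of $\overline{u_r}$ from $\Delta(\bC)$, and positivity of intersection at the isolated touching point of $\overline{u_1}$ produces the contradiction. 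Your write-up names the right tool and the right hypothesis (real-analyticity) but stops short of the actual fact that makes the reflection step close.
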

\begin{proof}
	Suppose for a contradiction that we have $\widehat{u_1} \in \widehat{\cM_2}(\gamma, \theta, h_t,J_t) \setminus \widehat{\cM^\Delta_2}(\gamma, \theta, h_t,J_t)$.
	
	We write $\partial \ol{\Sigma} = (\bR \times \{0,1\}) \cup (\{ -\infty, +\infty \} \times [0,1])$, and we choose $u_1 \colon \Sigma \rightarrow \bC^2$, a parametrized representative of $\widehat{u_1}$.
	
	If we are in the case where $u_1 (\partial \ol{\Sigma}) \cap \Delta(\bC) = \emptyset$, then $u_1 (\partial \ol{\Sigma})$ is a loop that has zero winding number around $\Delta(\bC)$, since each $u_r(\partial \ol{\Sigma})$ has zero winding number for $0 \leq r < 1$.  The condition that $\widehat{u_1} \in \widehat{\cM_2}(\gamma, \theta, h_t,J_t) \setminus \widehat{\cM^\Delta_2}(\gamma, \theta, h_t,J_t)$ implies that there must be at least one point $p \in \Sigma \setminus \partial \Sigma$ such that $u_1 (p) \in \Delta(\bC)$.  If all such $p$ are isolated then this will contradict the positivity of such isolated intersection points.  Here we refer to \cite[Lemma 4.3]{ganatrapormerleano} which establishes positivity of intersection between holomorphic divisors preserved by Hamiltonian flow and strips satisfying the Hamiltonian-perturbed Cauchy-Riemann-Floer equation.  In our case, $J_t$ is $\Jstd$ near the complex line $\Delta(\bC)$, while the derivative of the Hamiltonian $\theta H_t + h_t$ vanishes on $\Delta(\bC)$.
	
	If, on the other hand, not every such $p$ is isolated then analytic continuation implies that $u_1(\Sigma) \subseteq \Delta$ which would give another contradiction.
	
	Suppose then that $u_1(s_0,t_0) = (p,p) \in \Delta(\gamma) \subset \Delta(\bC)$ for some $s_0 \in \bR$ and $t_0 \in \{ 0 , 1 \}$.  Choose parametrized representatives $u_r \colon \Sigma \rightarrow \bC^2$ for $0 \leq r < 1$ so that $r \mapsto u_r(s, t)$ is smooth for all $(s,t) \in \Sigma$.
	
	Each $u_r$ satisfies the Cauchy-Riemann-Floer equation and has boundary conditions on $\gamma \times \gamma$.  Near the boundaries of the strip $\Sigma$ we have chosen the Hamiltonian perturbation $h_t$ and the almost complex structure $J_t$ so that the Cauchy-Riemann-Floer equation reduces to the Cauchy-Riemann equation with respect to the standard complex structure $\Jstd$.
	
	More precisely, there exists an $\epsilon_0 > 0$ (indeed, $\epsilon_0 = 0.1$) such that the restrictions to a half-disc $u_r \vert_{\Sigma \cap B_{\bR^2}((s_0,t_0), \epsilon_0)}$ satisfy the Cauchy-Riemann equation with respect to $\Jstd$ for all $0 \leq r \leq 1$.  Recall now that $\gamma$ is real analytic, so we have that $\gamma \times \gamma$ is real analytic with respect to $\Jstd$.  The Schwarz reflection principle then tells us that there exists a small disc $B_{\bR^2}((s_0,t_0),\epsilon_1)$ where $0 < \epsilon_1 < \epsilon_0$ has been chosen uniformly by compactness such that $u_r \vert_{\Sigma \cap B_{\bR^2}((s_0,t_0), \epsilon_1)}$ admits a $\Jstd$-holomorphic extension to
	\[ \ol{u_r} \colon B_{\bR^2} ((s_0,t_0), \epsilon_1) \longrightarrow \C^2 {\rm .} \]
	
	Let $B \subset \bC^2$ denote a neighborhood of $(p,p)$ such that $J |_B = \Jstd$, which contains the images of all $\ol{u_r}$ when $r$ is close enough to $1$, and such that Schwarz reflection across $(\gamma \times \gamma) \cap B$ carries $B$ into itself.
	Since the diagonal $\Delta(\bC)$ is $\Jstd$-holomorphic, both $\Delta(\bC) \cap B$ and its image under the Schwarz reflection are $\Jstd$-holomorphic.
	Since they intersect along the codimension 1 curve $\Delta(\gamma)$, they coincide by analytic continuation.
	Thus, $\Delta(\bC) \cap B$ is taken into itself by Schwarz reflection.
	It follows that any point of intersection between $\ol{u_r}$ and $\Delta(\bC)$ is either a point of intersection between $u_r$ and $\Delta(\bC)$ or is the image of such a point under the reflection.
	However, $u_r$ is disjoint from $\Delta(\bC)$, so the same follows for $\ol{u_r}$.

	If $(p,p)$ is a non-isolated intersection point between $\ol{u_1}$ and $\Delta(\bC)$, then analytic continuation implies that the image of $\ol{u_1}$ is contained in $\Delta(\bC)$, and further continuation implies that the image of $u_1$ is contained in $\Delta(\bC)$, a contradiction.
	
	On the other hand, if $(p,p)$ is an isolated intersection point, then it must be a positive intersection point and in particular an intersection point of non-zero multiplicity.  Hence for $r$ close to $1$, $\ol{u_r}$ must also intersect $\Delta(\bC)$, contradicting that $u_r$ is disjoint from $\Delta(\bC)$.
\end{proof}

\begin{lem}
	[\bf Breaking at the diagonal]
	\label{lem:diagonal_breaking}
	We do not have diagonal breaking.
\end{lem}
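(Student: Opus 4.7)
The plan is to reduce to the setting of Lemma~\ref{lem:touching_the_diagonal} by first passing to the reformulation of Section~\ref{subsec:reformulation} and then removing the asymptotic puncture. In the reformulated picture, the broken limit becomes two half-strips $u_1^{-\prime}, u_1^{+\prime}$ with boundary on $L_0 := \gamma\times\gamma$ and $L_1' := (\Phi^1_{\theta H_t + h_t})^{-1}(\gamma\times\gamma)$, each asymptoting at its inner end to the intersection point $(p,p) \in \Delta(\gamma) \subset L_0 \cap L_1'$. By Lemma~\ref{lem:jtransform}, $J'_t = \Jstd$ in a neighborhood of $\Delta(\bC)$, so there each half-strip satisfies the honest Cauchy-Riemann equation.

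Next, I would conformally reparametrize the asymptotic end of each $u_1^{\pm\prime}$ by $\zeta = e^{\mp\pi(s+it)}$, turning it into a $\Jstd$-holomorphic map on a punctured half-disc, with boundary on $L_0$ on one half of the real axis and on $L_1'$ on the other, and with continuous extension to the puncture $\zeta = 0$ carrying the value $(p,p)$. A removable-singularity argument for holomorphic strips with Lagrangian boundary yields a smooth $\Jstd$-holomorphic extension to the closed half-disc. Since $\gamma$ is real analytic, Schwarz reflection across $L_0$ then extends each half-strip further to a $\Jstd$-holomorphic map $\overline{u_1^{\pm\prime}} \colon D \to \bC^2$ from the full disc sending $0$ to $(p,p)$. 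As in Lemma~\ref{lem:touching_the_diagonal}, this reflection preserves $\Delta(\bC)$ locally, by analytic continuation from the codimension-one overlap $\Delta(\gamma)$.

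The contradiction now follows the pattern of Lemma~\ref{lem:touching_the_diagonal}. Since the $u_r'$ avoid $\Delta(\bC)$, the reflected extensions $\overline{u_1^{\pm\prime}}$ are disjoint from $\Delta(\bC)$ away from $\zeta = 0$. If the image of $\overline{u_1^{\pm\prime}}$ were contained in $\Delta(\bC)$, analytic continuation would force the same of the full $u_1^{\pm\prime}$, contradicting diagonal-avoidance; so $0$ is an isolated intersection point and hence has positive multiplicity $m_\pm \ge 1$ by positivity of intersection with the $\Jstd$-holomorphic divisor $\Delta(\bC)$ \cite[Lemma 4.3]{ganatrapormerleano}. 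Applying the same Schwarz-reflection procedure to $u_r$ for $r$ close to $1$ near $(p,p)$ produces extensions $\overline{u_r}$ still disjoint from $\Delta(\bC)$ which Gromov-converge to $\overline{u_1^{-\prime}} \sqcup \overline{u_1^{+\prime}}$ on a fixed neighborhood of $(p,p)$. Continuity of positive intersection forces $\overline{u_r} \cap \Delta(\bC)$ to contain at least $m_- + m_+ \ge 2$ points near $(p,p)$ for $r$ close to $1$, contradicting diagonal-avoidance of $u_r$.

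The main technical obstacle will be the removal-of-singularities step at the asymptotic puncture, since the asymptote is to a constant Hamiltonian trajectory associated to a clean intersection locus rather than to a non-degenerate transverse intersection point; the standard exponential decay estimates for transverse asymptotes do not immediately apply. However, because $J'_t = \Jstd$ in a neighborhood of $(p,p)$ and the Hamiltonian term has been absorbed by the reformulation, we are dealing with a genuine holomorphic strip with real analytic Lagrangian boundary, bounded energy, and continuous extension to the puncture, which are precisely the hypotheses under which classical removable-singularity results for holomorphic discs with boundary on cleanly-intersecting real analytic Lagrangians apply.
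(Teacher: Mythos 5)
Your strategy is to conformally remove the asymptotic puncture, Schwarz-reflect to a full disc, and then invoke positivity of interior intersections exactly as in Lemma~\ref{lem:touching_the_diagonal}. This fails at the reflection step, and in a way that is not cosmetic.

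The crucial difference from Lemma~\ref{lem:touching_the_diagonal} is that there the diagonal point $(p,p)$ occurs at a \emph{finite} boundary point $(s_0,t_0)$ with $t_0\in\{0,1\}$, so an entire boundary arc through it lies on a single real-analytic Lagrangian, and Schwarz reflection across that one Lagrangian doubles the half-disc into a genuine disc with $(p,p)$ at an \emph{interior} point. Here $(p,p)$ sits at the puncture $\zeta=0$, which is the corner where the boundary condition jumps from $L_0$ (positive real axis) to $L_1'$ (negative real axis). Reflecting across $L_0$ extends the map holomorphically only across the positive real axis; along the negative real axis the original map (with values in $L_1'$) and the reflected map (with values in $\sigma_{L_0}(L_1')$) do not match. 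The result is a holomorphic map on the slit domain $D\setminus(-1,0]$, not on $D$. The point $\zeta=0$ lies in the closure but not the interior of this slit domain, so positivity of intersection with the divisor $\Delta(\bC)$ gives you nothing: that tool applies to interior intersection points (or, via Schwarz doubling, to boundary intersection points on an arc entirely contained in one real-analytic Lagrangian), and $\zeta=0$ is neither. Trying to reflect across $L_1'$ along the other half of the boundary creates the same slit on the other side, with no reason for the two extensions to agree. The last paragraph inherits the problem: for $r<1$ the strips $u_r$ pass near $(p,p)$ on a rectangle $[\wt{R_r},\wt{R_r}']\times[0,1]$ touching \emph{both} boundary components of $\Sigma$, so ``the same Schwarz-reflection procedure'' again involves two incompatible reflections and does not produce a holomorphic family of discs on which to apply continuity of intersection number.

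The paper sidesteps all of this with a winding-number argument in the target rather than any local analysis at the corner: one passes to the reformulation (as you do), projects by $\pid(z,w)=z-w$ (holomorphic for $\Jstd$, sending $\Delta(\bC)$ to $0$), and observes that near $(p,p)$ the $L_0$-boundary values of $\pid\circ v_1^\pm$ lie in a narrow sector $W$ while the $L_1'$-boundary values lie in $e^{-i\theta}W$, with $W\cap e^{-i\theta}W=\{0\}$. This forces a boundary loop of a truncated strip to have positive winding number about a well-chosen point $a$ near $0$, and holomorphicity of $\pid\circ v_r$ on the truncation then forces the image to cover $0$, i.e.\ to meet $\Delta(\bC)$, for $r$ close to $1$. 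No removal of singularity, no reflection, and no local multiplicity at the corner is needed. If you want to keep your local-positivity framework rather than switch to the paper's approach, you would at minimum need a substitute for the full-disc extension, for instance a direct argument computing a boundary winding/degree on the slit disc, but at that point you are essentially reproducing the paper's winding-number argument in disguise.
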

\begin{proof}
	Suppose for a contradiction that $\widehat{u_1}$ is the concatenation of two strips $\widehat{u_1} = \widehat{u_1^-} \# \widehat{u_1^+}$ in which the parametrized strips $u_1^-$ and $u_1^+$ satisfy $\lim_{s \rightarrow +\infty} u_1^-(s,t) = (p,p) = \lim_{s \rightarrow -\infty} u_1^+(s,t)$ for some $p \in \gamma$ and for all $0 \leq t \leq 1$.

	First note that one can port the argument from the proof of Lemma~\ref{lem:touching_the_diagonal} to show that we cannot have $u_1^-(\Sigma)$ or $u_1^+(\Sigma)$ having non-empty intersection with $\Delta(\bC)$.
	
	We choose $R \gg 0$ so that $u_1^-([R,\infty) \times [0,1])$ and $u_1^+((-\infty,-R] \times [0,1])$ are close enough to the diagonal so that $J_t$ is $\Jstd$ for all points of $u_1^-([R,\infty) \times [0,1]) \cup u_1^+((-\infty,-R] \times [0,1])$.
	
	We work with the reparametrizations $v_1^+$, $v_1^-$ of $u_1^+$, $u_1^-$ respectively
\[
		v_1^{\pm}(s,t) = (\Phi^t_{\theta H_t + h_t})^{-1} \circ u_1^\pm(s,t) {\rm ,}
\]
	which we introduced in Subsection~\ref{subsec:reformulation}.
	It follows from \Cref{lem:jtransform} that
	$v_1^+ \vert_{s \leq R}$ and $v_1^- \vert_{s \geq R}$ satisfy the Cauchy-Riemann equation
	\[ \partial_s v_1^\pm + \Jstd \partial_t v_1^\pm = 0 \]
	with respect to the standard almost complex structure $\Jstd$.

	Our plan is to study the projections of the holomorphic curves $v_1^\pm$ under the map
	\begin{equation}
	\pid \colon \bC^2 \longrightarrow \C \colon (z,w) \longmapsto z-w {\rm ,}
	\end{equation}
	using `d' for `difference'.
	Note that this projection map is holomorphic with respect to $\Jstd$ and maps the diagonal $\Delta(\bC) \subset \bC^2$ to the origin $0 \in \bC$.  We are going to be concerned in particular with the arguments of points on the curves $v_1^\pm$ under this projection map.
	
	Choose a point $p \in \gamma$ and a non-zero $v \in T_p \gamma \subset T_p \bC = \bC$.
	Next let $A \subset \gamma$ be a small open arc with $p \in A$.  By taking $A$ to be small enough we may ensure that
		\[ \{ \arg(z - w) : z, w \in A, z \not= w \} \subset (\arg(v) - \epsilon, \arg(v) + \epsilon ) \cup (\arg(v) + \pi - \epsilon, \arg(v) + \pi + \epsilon) \subset \bR/2\pi\bZ = S^1 \]
	for any small $\epsilon > 0$ that we choose.  
	In fact, we shall now choose $\epsilon > 0$ to be small relative to $\theta$:
	\[ \epsilon < \theta/2  \,\, {\rm and} \,\, \epsilon < (\pi - \theta)/2 \]	
	and choose the small interval $A \subset \gamma$ accordingly.
	
	By increasing $R$ if necessary, we ensure that
	\[ v_1^-( [R,\infty) \times \{ 0 \} ) \cup  v_1^+( (-\infty, -R] \times \{ 0 \} ) \subset A \times A \] and
	\[v_1^-( [R,\infty) \times \{ 1 \} ) \cup v_1^+( (-\infty, -R] \times \{ 1 \} ) \subset R_\theta^{-1}(A \times A) = R_{-\theta} (A \times A) {\rm .} \]
	
	Write $W \subset \bC$ for the region of the complex plane consisting of all points whose arguments lie in the set \[ (\arg(v) - \epsilon, \arg(v) + \epsilon) \cup (\arg(v) + \pi - \epsilon, \arg(v) + \pi + \epsilon) {\rm .} \]
	
	Then we see that
	\[ \pid \circ v_1^-( [R,\infty) \times \{ 0 \} ) \cup  \pid \circ v_1^+( (-\infty, -R] \times \{ 0 \} ) \subset W \] and
	\[\pid \circ v_1^-( [R,\infty) \times \{ 1 \} ) \cup \pid \circ v_1^+( (-\infty, -R] \times \{ 1 \} ) \subset e^{-i \theta}W {\rm ,} \]
	but $W \cap e^{-i \theta}W = \{ 0 \}$.
	
	Note also that for $r \not= 1$ we have that $v_r$ does not meet the diagonal $\Delta(\bC)$.  Hence we see that $\pid \circ v_1^-( [R,\infty) \times \{ 0 \} ) \cup  \pid \circ v_1^+( (-\infty, -R] \times \{ 0 \} )$ lies entirely in one component of $W \setminus \{ 0 \}$ while $\pid \circ v_1^-( [R,\infty) \times \{ 1 \} ) \cup \pid \circ v_1^+( (-\infty, -R] \times \{ 1 \} )$ lies entirely in one component of $e^{-i \theta} W \setminus \{ 0 \}$.
	
	\begin{figure}
		\labellist
		\pinlabel {$a$} at 530 820
		\pinlabel {$W$} at 690 -50
		\pinlabel {$e^{-i \theta} W$} at -50 150
		\endlabellist
		\centering
		\includegraphics[scale=0.1]{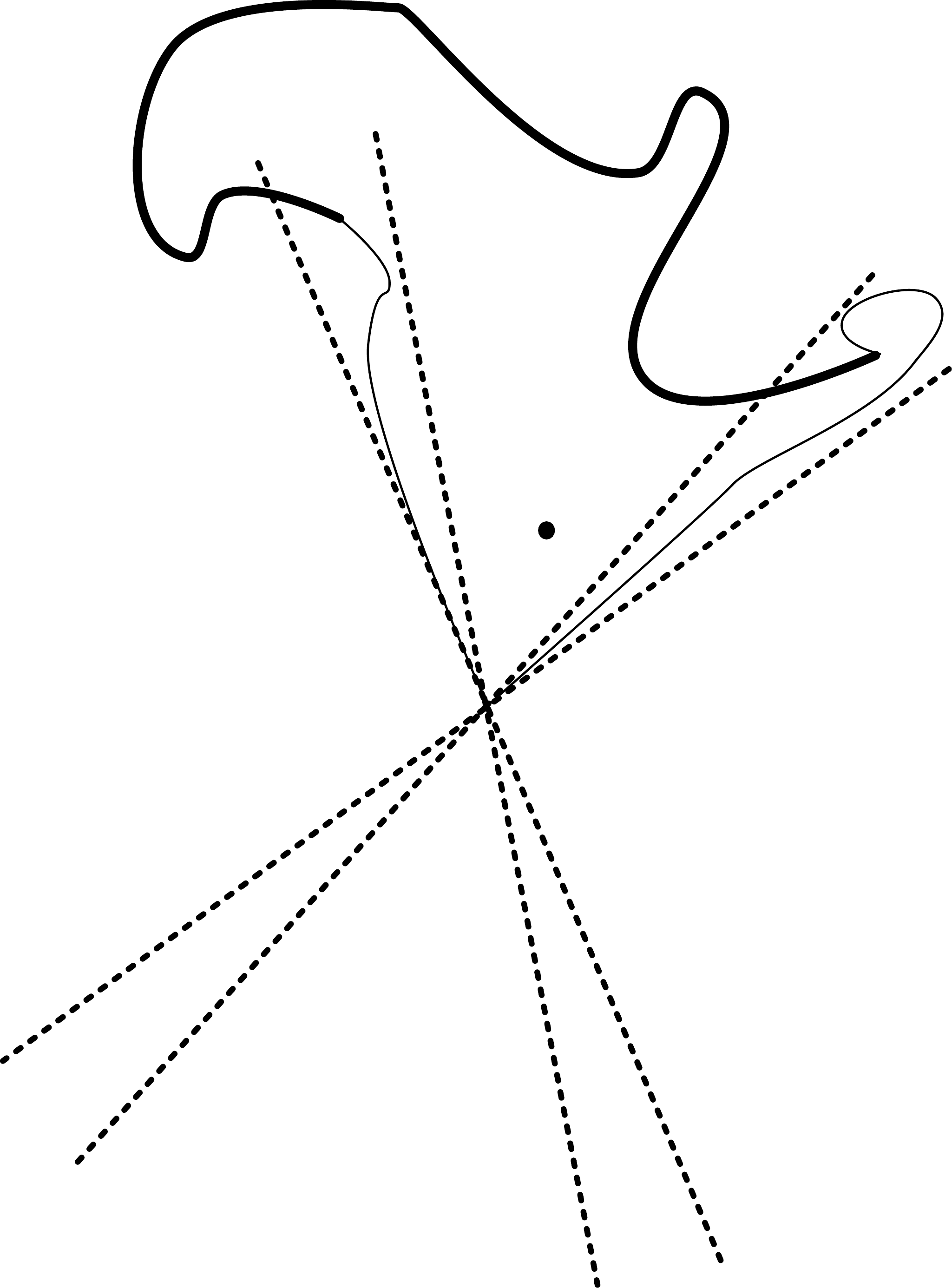}
		\caption{We show the two regions $W$ and $e^{-i \theta}W$ of the plane.  Within them we have drawn in bold $\pid \circ v_1^-(\{R\} \times [0,1])$ and with a finer nib $\pid \circ v_1^-([R, \infty) \times \{ 0,1 \})$.  By adding in $\{ 0 \} = \pid \circ v_1^-(\{\infty\} \times [0,1])$ we obtain a closed loop $\Lambda$ (possibly self-intersecting) in the plane.  The loop $\Lambda$ winds around the point in the plane labelled with an $a$.}
		\label{fig:breakingbad}
	\end{figure}

In Figure \ref{fig:breakingbad} we have indicated the closed loop $\Lambda$ in the plane given by
\[ \Lambda = \pid \circ v_1^-( \{ R , \infty \}  \times [0,1] \cup [ R, \infty ] \times \{ 0,1 \} ) {\rm.} \]
Letting $\delta>0$ stand for the minimum distance from the origin of the union \[ \pid \circ v_1^-( \{ R \}  \times [0,1] ) \cup \pid \circ v_1^+ (\{ -R \} \times [0,1] ) {\rm ,} \]
we choose a point $a \in \bC$ with $\vert a \vert = \delta/2$ and $a \notin (W \cup e^{-i \theta} W)$ such that the winding number of $\Lambda$ around $a$ is non-zero.  It follows in particular that we have
\[ a \in \pid \circ v_1^-((R,\infty) \times (0,1)) {\rm .} \]

	Finally, we observe that for all $r \in (0,1)$ sufficiently close to $1$ we may choose $\wt{R_r}, \wt{R_r}' \in \bR$ with $\wt{R_r} < \wt{R_r}'$ so that
	\begin{itemize}
		\item $v_r(\partial ([\wt{R_r}, \wt{R'_r}] \times [0,1]) ) \rightarrow v^-_1(\partial ([R,\infty]\times[0,1])) \cup v_1^+(\partial([-\infty,-R] \times [0,1]))$ as $r \rightarrow 1$,
		\item $\pid \circ v_r$ is holomorphic on $[\wt{R_r}, \wt{R_r}'] \times [0,1]$,
		\item $\pid \circ v_r( \{ \wt{R_r}, \wt{R_r}' \} \times [0,1])$ is distance at least $3\delta/4$ from the origin,
		\item $\pid \circ v_r( [\wt{R_r}, \wt{R_r}'] \times \{ 0 \} ) \subset ({\rm a} \,\, {\rm component} \,\, {\rm of} \,\, W \setminus \{0\})$,
		\item $\pid \circ v_r( [\wt{R_r}, \wt{R_r}'] \times \{ 1 \} ) \subset ({\rm a} \,\, {\rm component} \,\, {\rm of} \,\, e^{-i \theta}W \setminus \{0\})$, and
		\item $a \in \pid \circ v_r( (\wt{R_r}, \wt{R_r}') \times (0,1) )$.
	\end{itemize}

\begin{figure}
	\labellist
	\pinlabel {$a$} at 530 820
	\pinlabel {$W$} at 690 -50
	\pinlabel {$e^{-i \theta} W$} at -50 150
	\endlabellist
	\centering
	\includegraphics[scale=0.1]{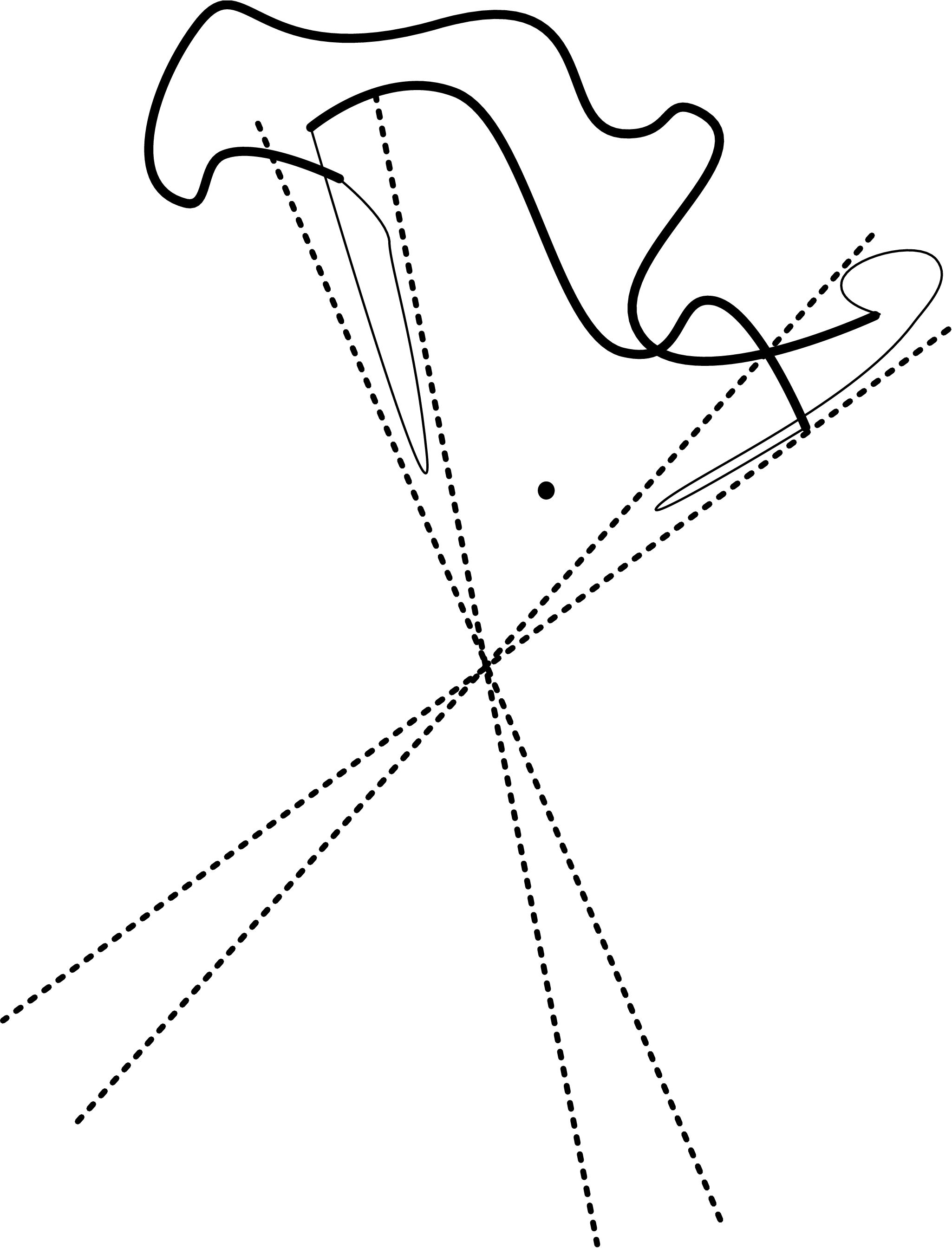}
	\caption{We have drawn in bold the two arcs $\pid \circ v_r( \{ \wt{R_r}, \wt{R_r}' \} \times [0,1])$ and with a finer nib $\pid \circ v_r( [\wt{R_r}, \wt{R_r}'] \times \{ 0 \} ) \subset W$ and $\pid \circ v_r( [\wt{R_r}, \wt{R_r}'] \times \{ 1 \} ) \subset e^{-i \theta}W$.  Their union gives the image of a loop in the plane.  The point $a \in \bC$ lies in the same connected component as $0$ of the complement of this loop.}
	\label{fig:bettercallsaul}
\end{figure}

We have illustrated this in Figure \ref{fig:bettercallsaul}.
	
	Since $\pid \circ v_r$ is holomorphic, it follows that the winding number of
	\[ \pi_d \circ v_r ( \{ \wt{R_r}, \wt{R_r}' \} \times [0,1] \cup [\wt{R_r}, \wt{R_r}'] \times \{ 0 , 1 \} ) \]
	around $a \in \bC$ is positive so, in particular, non-zero.  Then, since the origin $0 \in \C$ is in the same connected component of
	\[ \bC \setminus \pi_d \circ v_r ( \{ \wt{R_r}, \wt{R_r}' \} \times [0,1] \cup [\wt{R_r}, \wt{R_r}'] \times \{ 0 , 1 \} ) \]
	as $a \in \bC$ it follows that the image of $v_r$ intersects the diagonal $\Delta(\bC)$, which contradicts our hypotheses.
\end{proof}

	\begin{lem}
		[\bf Sphere bubbling]
		\label{lem:sphere}
		There is no sphere bubbling.
	\end{lem}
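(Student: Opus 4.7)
The plan is to rule out sphere bubbles using the fact that $(\bC^2,\omega)$ is an exact symplectic manifold. If a sphere bubble occurs in the Gromov limit $\widehat{u_1}$, then by the compactification theorem (e.g.\ Schm\"aschke \cite[Theorem 5.1.4]{schmaschke} as cited above), there is a non-constant $J_{t_0}$-holomorphic map
\[
v \colon S^2 \longrightarrow \bC^2
\]
for some $t_0 \in [0,1]$. Our task is to derive a contradiction from the mere existence of such a $v$, regardless of where it attaches in the strip, and regardless of any Maslov/monotonicity considerations.

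The first step is to record the energy identity. Because $J_{t_0} \in \cJ(\bC^2,\omega)$ is $\omega$-compatible, $|\cdot|_{J_{t_0}} = \omega(\cdot,J_{t_0}\cdot)$ is positive definite, and for any $J_{t_0}$-holomorphic map from a Riemann surface one has
\[
\int_{S^2} v^*\omega \;=\; \tfrac{1}{2}\int_{S^2}|dv|_{J_{t_0}}^2 \;\geq\; 0,
\]
with equality precisely when $v$ is constant. Since $v$ is non-constant by hypothesis, the left-hand side is strictly positive.

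The second step is to compute the left-hand side by exactness. The standard symplectic form on $\bC^2$ admits the Liouville primitive $\lambda = \tfrac{1}{2}\sum_i (x_i\,dy_i - y_i\,dx_i)$, so $\omega = d\lambda$ globally. Since $S^2$ is closed, Stokes's theorem gives
\[
\int_{S^2} v^*\omega \;=\; \int_{S^2} d(v^*\lambda) \;=\; 0,
\]
contradicting the strict positivity above. Hence $v$ must be constant, which contradicts the definition of a bubble; no sphere bubbling can occur.

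The only potential obstacle is verifying that the Gromov compactification in Schm\"aschke's setting does indeed only produce non-constant sphere components (constant components being reabsorbed into the principal strip), but this is explicit in his statement, so the exactness argument above closes the case with no further work.
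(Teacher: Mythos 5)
Your proof is correct and follows essentially the same approach as the paper: both derive a contradiction by noting that a non-constant $J$-holomorphic sphere would have strictly positive energy, yet its symplectic area must vanish in $\bC^2$. The paper justifies the vanishing via $H_2(\bC^2)=0$ while you invoke exactness of $\omega$ and Stokes's theorem; these are interchangeable in this setting.
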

	
	\begin{proof}
	The argument is well-known.
	For sake of contradiction, a sphere bubble would entail the existence of a non-constant map of the Riemann sphere $v \colon \C \mathbb{P}^1 \to \bC^2$ which is $J$-holomorphic for some $J \in \cJ(\bC^2,\omega)$.
	Since $v$ is non-constant, its energy is positive.
	On the other hand, since $v$ is $J$-holomorphic, its energy equals $[\omega](v_*[\bC P^1])$.
	Since $H_2(\bC^2) = 0$, this value vanishes, and we reach a contradiction.
	\end{proof}

	\begin{lem}
		[\bf Disc bubbling]
		\label{lem:disc_bubbling}
		There is no disc bubbling.
	\end{lem}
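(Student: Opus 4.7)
The plan is to rule out disc bubbling by a standard dimension count, exploiting that the Lagrangian tori $L_0 = \gamma \times \gamma$ and $L_1 = R_\theta(L_0)$ have minimum Maslov number $2$ together with the fact that every trajectory in $\cG(\gamma,\theta,h_t)$ is nonconstant. Suppose for contradiction that the Gromov limit $\widehat{u_1}$ decomposes as a main strip $u_1^\infty$ together with a disc bubble $v\colon (D^2, \partial D^2) \to (\bC^2, L)$ attached at an interior boundary point, where $L = L_0$ if the bubble forms at $t=0$ and $L = L_1$ if at $t = 1$.

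In either case the bubble forms in a neighborhood of $\partial\Sigma$ on which $J_t \equiv \Jstd$ and the Hamiltonian vector field $X_{\theta H_t + h_t}$ vanishes identically, by condition (4) of admissibility for $J_t$, condition (1) for $h_t$, and the definition of $H_t$. The standard rescaling procedure therefore exhibits $v$ as a nonconstant $\Jstd$-holomorphic disc. Using that $R_\theta$ is a holomorphic automorphism of $(\bC^2, \Jstd)$, we may post-compose by $R_\theta^{-1}$ if necessary and reduce to the case $L = L_0$. Integrability of $\Jstd$ then splits $v$ as a pair of holomorphic maps $v = (v_1, v_2) \colon D^2 \to \bC$ with $v_i(\partial D^2) \subset \gamma$. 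The maximum principle forces $v_i(D^2) \subset \overline{D_\gamma}$; if nonconstant, $v_i$ is a proper holomorphic map and hence a branched cover of $D_\gamma$ of some degree $d_i \geq 1$. Additivity of the Maslov index over the factors then yields $\mu(v) = 2(d_1 + d_2) \geq 2$, since $v$ itself is nonconstant.

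Next, the Touching Diagonal lemma (\Cref{lem:touching_the_diagonal}) implies that the main strip $u_1^\infty$ remains diagonal-avoiding, so by \Cref{lem:capping} it inherits a preferred capping. Additivity of the Maslov index across the bubble degeneration gives $\mu(u_1^\infty) = \mu(u_r) - \mu(v) \leq 2 - 2 = 0$, where we have used that $u_r$ represents a parametrized Maslov-$2$ strip (since it lies in a $1$-dimensional component of $\widehat{\cM_2^\Delta}$). By \Cref{p:baire1}, each component of $\cM^\circ(\gamma,\theta,h_t,J_t)$ has dimension equal to the Maslov index, and quotienting by the free $\bR$-reparametrization on nonconstant strips decreases this dimension by one. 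A nonconstant strip with $\mu \leq 0$ therefore lies in an unparametrized moduli of dimension $\leq -1$, which is empty. The only remaining possibility is that $u_1^\infty$ is constant, forcing its common asymptotic trajectory to be a point of $\gamma \times \gamma$ at which $X_{\theta H_t + h_t}$ vanishes for every $t$; such a point lies in $\Delta(\bC) \cap L_0 = \Delta(\gamma)$, contradicting \Cref{lem:distant_trajectory}, which ensures every trajectory in $\cG(\gamma,\theta,h_t)$ stays at distance $> \w(\gamma)/2$ from $\Delta(\bC)$. The main obstacle, for which condition (4) on $J_t$ is crucial, is the explicit holomorphic factorization of the bubble in the middle paragraph: without $\Jstd$ near $\partial \Sigma$ the bubble would only be $J_0$-holomorphic for some potentially nonintegrable $J_0$, blocking the clean splitting into branched covers and the associated Maslov computation.
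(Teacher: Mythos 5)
Your approach is a standard Maslov--index dimension count, and it fails at precisely the point where monotone Lagrangian Floer theory with minimum Maslov number $2$ is known to break: the $m_0$ obstruction. You correctly deduce $\mu(v) = 2(d_1+d_2) \ge 2$ for the bubble and hence $\mu(u_1^\infty) \le 0$ for the main strip, and you then claim a nonconstant strip with $\mu \le 0$ lies in an unparametrized moduli of dimension $\le -1$. That is false when $\mu(u_1^\infty) = 0$: the \emph{trivial strip} $u(s,t) = \tau(t)$ over a nonconstant trajectory $\tau \in \cG(\gamma,\theta,h_t)$ is nonconstant, has Maslov index $0$, and is fixed by the $\bR$-reparametrization action, so the quotient there is not a $(-1)$-manifold but a point. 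Your final dichotomy "constant vs. nonconstant with negative-dimensional moduli" omits exactly this case. A Maslov-$2$ disc bubbling off the boundary of a trivial strip is the standard codimension-zero boundary contribution that gives $\partial^2 = m_0 \cdot \mathrm{id} \ne 0$ for a monotone Lagrangian torus of minimum Maslov number $2$; no dimension count alone can rule it out, and this is the whole difficulty the lemma must confront.

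The paper's proof does not use a Maslov count at all. Instead it runs a winding-number and positivity-of-intersection argument against the divisor $\Delta(\bC)$: a bubble whose boundary loop lies on $(\gamma\times\gamma)\setminus\Delta(\gamma)$ either bounds a disc of zero symplectic area (no bubble) or has nonzero winding number around $\Delta(\bC)$ and hence the bubble must meet $\Delta(\bC)$, in which case the local reflection/positivity arguments of \Cref{lem:touching_the_diagonal} force nearby $u_r$ to intersect $\Delta(\bC)$. The genuinely delicate case of a bubble forming \emph{at} a point $(p,p)\in\Delta(\gamma)$ is handled by Schwarz reflection (this is where condition (4) on $J_t$ is really used) to produce holomorphic discs meeting $\Delta(\bC)$ positively; approximating rectangles $\rho_r \subset \Sigma$ reflect to holomorphic cylinders which must then hit $\Delta(\bC)$, contradicting diagonal-avoidance of $u_r$. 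Also note your appeal to \Cref{lem:touching_the_diagonal} is not quite as stated (that lemma concerns an unbroken limit strip, not the main component of a bubbled limit), though the paper does invoke the same local arguments mutatis mutandis. To repair your proof you would need to replace the Maslov dimension count with a diagonal-avoidance argument of this kind.
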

	\begin{proof}
	The main case of disc bubbling to exclude will be when the bubbling happens at a point on $\Delta(\gamma)$.  Let us begin by arguing why the other cases cannot occur.  Note that it is not possible for a disc bubble to be disjoint from $\Delta(\bC)$ in both its interior and its boundary circle, since any loop on $(\gamma \times \gamma)\setminus \Delta(\gamma)$ either bounds a disc of zero symplectic area (and hence cannot bound a disc bubble), or has non-zero winding number around $\Delta(\bC)$ (and hence any disc bubble it bounds must intersect $\Delta(\bC)$ in its interior).  On the other hand, the local arguments of Lemma \ref{lem:touching_the_diagonal} can be applied \emph{mutatis mutandis} to show that a disc bubble cannot meet the diagonal at any point except that at which it bubbles off, and likewise neither can the resulting strip.
	
	Suppose then that for $r=1$, a disc bubbles off at $(p,p) \in \Delta(\gamma)$ and assume that this bubble meets the diagonal only at this point.  For a point $\star \in \partial \Sigma \subset \Sigma \subset \bC$, let $D_\star$ be the closed unit disc externally tangent to $\Sigma$ at $\star$.  We model the bubbling as a map
	\[ u_1 \colon \Sigma \cup D_\star \longrightarrow \bC^2 \]
	where $u_1 \vert_\Sigma$ satisfies the perturbed Cauchy-Riemann-Floer equation and $u_1 \vert_{D_\star}$ is $\Jstd$-holomorphic.
	
	Let $B_\epsilon$ be the closed disc of radius $\epsilon > 0$ centred at $\star \in \bC$.  Then for small enough $\epsilon$, $u_1 \vert_{B_\epsilon \cap \Sigma}$ satisfies the Cauchy-Riemann equation (since the Hamiltonian and its perturbation vanish near the boundary of the strip, and the almost-complex structure is $\Jstd$ near $\Delta(\bC)$).  This then allows us, for small enough $\epsilon$, to Schwarz-reflect $u_1\vert_{B_\epsilon \cap \Sigma}$ and $u_1\vert_{B_\epsilon \cap D_\star}$ across $\gamma \times \gamma$ to obtain two holomorphic discs in $\bC^2$ which each meet $\Delta(\bC)$ exactly in the point $(p,p)$.  By positivity of intersection their boundary circles must each have positive winding number around $\Delta(\bC)$.

	Now for $r<1$ we take a smoothly varying family of properly embedded arcs $A^1_r, A^2_r \subset \Sigma$ cobounding rectangles $\rho_r \subset \Sigma$ so that $u_r(A^1_r) \rightarrow u_1(\partial B_\epsilon \cap D_\star)$ and $u_r(A^2_r) \rightarrow u_1(\partial B_\epsilon \cap \Sigma)$.  If we have chosen $\epsilon$ small enough, then for $r$ sufficiently close to $1$, the rectangles $\rho_r$ may be Schwarz-reflected to give holomorphic cylinders in $\bC^2$, which degenerate to at $r=1$ to a pair of holomorphic discs meeting at the point $(p,p)$.
	
	\begin{figure}
		\labellist
		\pinlabel {$B_\epsilon \cap D_\star$} at 1410 1100
		\pinlabel {$B_\epsilon \cap \Sigma$} at 1960 940
		\pinlabel {$\rho_r \subset \Sigma$} at 370 1100
		\pinlabel {$u_r\vert_{\rho_r}$ reflected.} at 400 -80
		\pinlabel {$u_1 \vert_{B_\epsilon \cap (\Sigma \cup D_{\star})}$ reflected} at 1700 -80
		\pinlabel {$r \longrightarrow 1$} at 980 350
		\endlabellist
		\centering
		\includegraphics[scale=0.1]{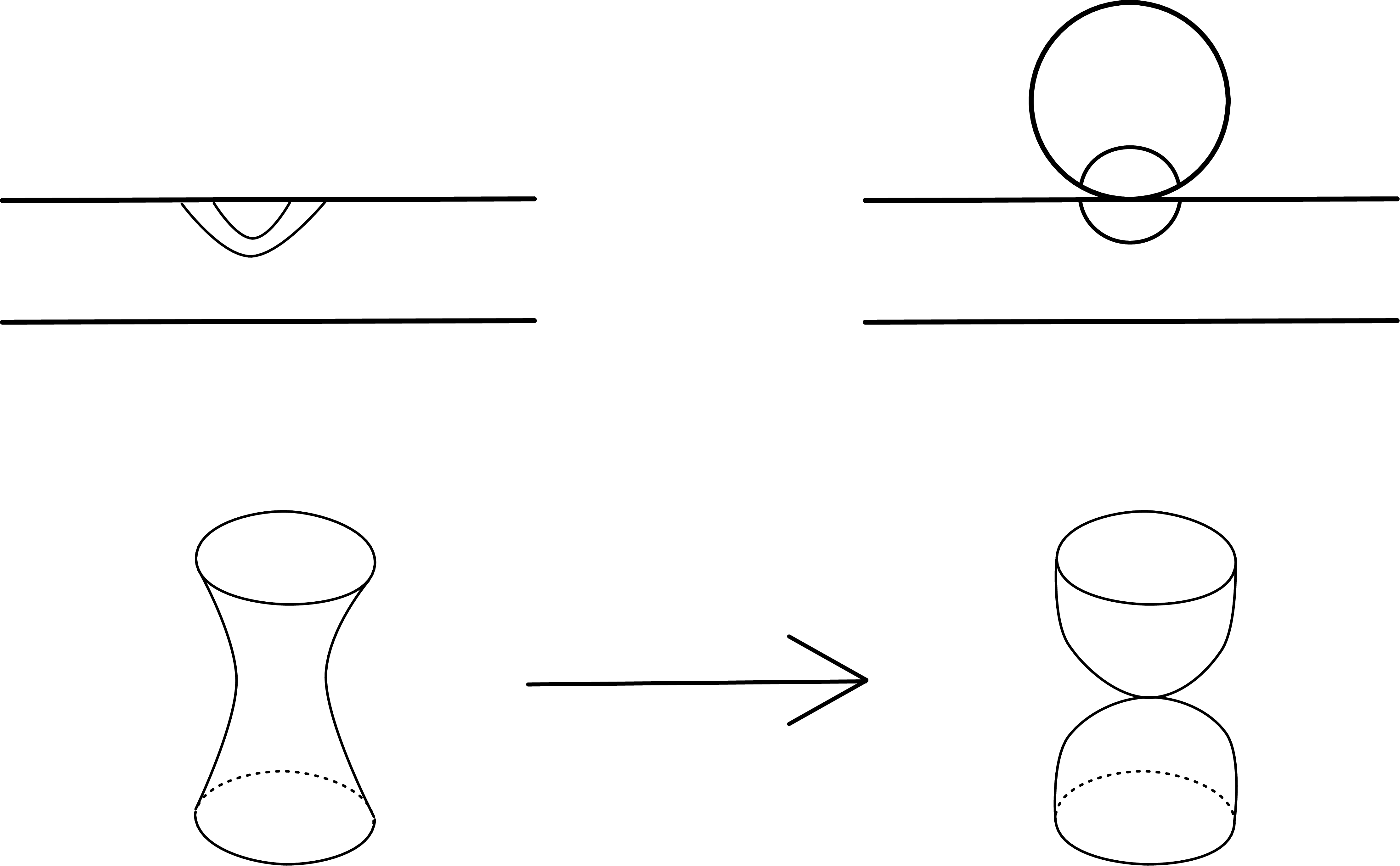}
		\vspace*{5mm}
		\caption{On the left we have shown the embedded rectangle $\rho_r \subset \Sigma$ and its image, (along with its Schwarz reflection) in $\bC^2$ -- this is a holomorphic cylinder.  On the right we have shown the bubbling degeneration as $r \rightarrow 1$.  The cylinder degenerates to the union of two holomorphic discs which meet at the point $(p,p)$ at which the bubble occurs.  These discs each meet the diagonal $\Delta(\bC)$ positively, so for $r$ close to $1$, the cylinders must also.}
		\label{fig:bubble}
	\end{figure}
	
	We illustrate the situation in Figure \ref{fig:bubble}.
	
	Since the winding numbers around $\Delta(\bC)$ of each boundary component of the cylinders is positive, it follows that the cylinders must intersect $\Delta(\bC)$ in their interiors.  Therefore $u_r$ must meet $\Delta(\bC)$ and we have a contradiction.
\end{proof}

\subsection{The continuation package.}
\label{subsec:continuation}
Once one has defined a Lagrangian Floer chain complex for a choice of Lagrangian, Hamiltonian, Hamiltonian perturbation, and almost-complex structure, there is a general procedure available for showing that its chain homotopy type is independent of choice of perturbation and almost-complex structure, and moreover for setting up isomorphisms between the homologies corresponding to different choice of Hamiltonian.  This general procedure involves counts of moduli spaces of strips in which the Hamiltonians and the almost-complex structures are now allowed to be strip-dependent.  The chain maps are sometimes known as \emph{continuation} maps, and we refer to the collection of strip counts from which one can define chain maps, chain homotopies, and so on as the `continuation package'.  For a readable account we refer the reader to \cite[Section 1.5]{aurouxintro}.

We begin by addressing chain maps in our setting.  Suppose that we are considering the admissible quadruples $(\gamma, \theta_1, h^1_t, J^1_t)$ and $(\gamma, \theta_2, h^2_t, J^2_t)$.  We choose a smooth monotonic function $\beta' \colon \bR \rightarrow [0,1]$ which is $0$ in a neighborhood of $(-\infty,0]$ and $1$ in a neighborhood of $[1, \infty)$.  Then we define the strip-dependent Hamiltonian $H^{12}_{st}$ by
\[ H^{12}_{st} = (1 - \beta'(s))\theta_1 H_t + \beta'(s)\theta_2 H_t {\rm .} \]

We also choose a smooth path of $J^{12}_{st}$ of almost-complex structures such that $J^{12}_{st} = J^1_t$ for $s \leq 0$ and $J^{12}_{st} = J^2_t$ for $s \geq 1$, and choose these $J^{12}_{st}$ to be standard $\Jstd$ again, as usual, sufficiently near the boundaries of the strip and the diagonal $\Delta(\bC)$.

The choices so far have been made without a view towards establishing transversality of moduli spaces.  When we were trying to achieve this in the case of the differential, we were careful with our choice of almost-complex structure $J_t$.  In the case of these continuation strips, the usual approach is to introduce a strip-dependent Hamiltonian perturbation.  For our purposes we wish to add a perturbation term of the form
\[ h^{12}_{st} \colon \bC^2 \longrightarrow \bR \]
in which we make the requirement that $h^{12}_{st}$ is zero outside the region $(s,t) \in (0,1) \times (0.1,0.9) \subset \Sigma$, $h^{12}_{st} = h^1_t$ when $s \leq 0$, $h^{12}_{st} = h^2_t$ when $s \geq 1$, and moreover we require that all the partial derivatives of $h^{12}_{st}$ vanish to infinite order at the diagonal $\Delta(\bC)$.  That such perturbations suffice for transversality is shown by Audin-Damian \cite[Section 11.1.b]{audindamian}.

We then define a chain map
\[ \JFC(\gamma, \theta_1, h^1_t, J^1_t) \longrightarrow \JFC(\gamma, \theta_2, h^2_t, J^2_t) \]
as a matrix with respect to the bases $\cG(\gamma,\theta_1, h_t^1)$ and $\cG(\gamma,\theta_2, h_t^2)$ in which the coefficients of the matrix are given by counts of $0$-dimensional moduli spaces of strips with Lagrangian boundary conditions on $\gamma \times \gamma$, satisfying the Cauchy-Riemann-Floer equation with respect to the Hamiltonian $H^{12}_{st} + h^{12}_{st}$ and the almost-complex structure $J^{12}_{st}$, limiting at either end to non-constant trajectories, and whose closures in $\bC^2$ are disjoint from $\Delta(\bC)$.

The proof that a map defined in such a way does define a chain map results from analysing the Gromov boundary degenerations of the $1$-dimensional moduli spaces of such strips.  Again, the main concern for us will be in verifying that such degenerations consist of concatenations of diagonal-avoiding strips.

Supposing for moment that one has successfully performed this verification, there remains the question of whether the chain maps thus defined are in fact chain-homotopy equivalences.  That brings us to the next component of the continuation package.  The continuation approach to proving this result is to make a count of strips (or now, rather, a count from among a $1$-parameter family of strips which come with strip-dependent Hamiltonians, almost complex structures, and Hamiltonian perturbations) of Maslov index $-1$ to define a chain homotopy $h$.  The boundary degenerations of the $1$-parameter family of Maslov index $0$ strips should then add up to $0$ modulo $2$, allowing one to verify that
${\rm id} + F \circ G + \partial \circ h + h \circ \partial = 0$ where ${\rm id}$ is the identity chain map, $F$ and $G$ are two chain maps constructed as above such that $F \circ G$ and $G \circ F$ are both defined, and $\partial$ is the Floer differential.

Again, the main question for us is whether this can all be carried out for diagonal-avoiding strips.  That is, we must make the same checks as before: verifying that a $1$-parameter family of such strips cannot wander up to and touch the diagonal, cannot bubble off a disc at the diagonal, and cannot break at the diagonal.  The proofs proceed as they did in verifying $\del^2=0$ in \Cref{subsec:differential}, but the case of diagonal breaking needs a little modification.

First, let us list some important features of the Hamiltonians and almost-complex structures that show up in our situation in the setting of the strips of the continuation package.  We begin with a pair of admissible quadruples $(\gamma, \theta_i, h_t^i, J^i_t)$ for $i=1,2$.
We write $h^{12}_{st}$ for the Hamiltonian perturbation, $H^{12}_{st}$ for the `prescribed' Hamiltonian (so that the Hamiltonian used in the Cauchy-Riemann-Floer equation is $H^{12}_{st} + h^{12}_{st}$), and $J_{st}$ for the almost complex structure.  We have the following:

\begin{itemize}
	\item For some $S \gg 0$ and for all $s < -S$ we have $H^{12}_{st} = \theta_1 H_t$, $h^{12}_{st} = h^1_t$,  and $J^{12}_{st} = J^1_t$, and for all $s > S$ we have $H^{12}_{st} = \theta_2 H_t$, $h^{12}_{st}= h^2_t$,  and $J^{12}_{st} = J^2_t$.
	\item For all fixed $s_0$, we have $H^{12}_{s_0 t} = \theta_0 H_t$ for some $\theta_0$ lying between $\theta_1$ and $\theta_2$.
	\item We have that $h^{12}_{st}$ and all its partial derivatives up to infinite order vanish at the diagonal $\Delta(\bC)$.
	\item We have $J^{12}_{st} = \Jstd$ and $h^{12}_{st} = 0$ for $0 \leq t < 0.1$ and $0.9 < t \leq 1$.
	\item We have $J^{12}_{st}(z,w) = \Jstd$ whenever $\vert z - w \vert < \w(\gamma)/2$.
\end{itemize}

\begin{lem}
[\bf Touching the diagonal, sphere bubbling, disc bubbling]
	\label{lem:cont_maps_no_touching}
	If $u_r$, $0 \leq r < 1$ is $1$-parameter family of continuation strips such that $u_r$ is diagonal-avoiding, then the Gromov limit $u_1$ cannot be a diagonal-intersecting strip, or a strip with a disc or sphere bubble.
\end{lem}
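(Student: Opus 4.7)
The plan is to port the proofs of Lemmas \ref{lem:touching_the_diagonal}, \ref{lem:sphere}, and \ref{lem:disc_bubbling} more or less verbatim, exploiting the bulleted properties of $H^{12}_{st}$, $h^{12}_{st}$, and $J^{12}_{st}$ listed immediately above this lemma. What actually mattered in those three arguments was that, in a neighborhood of $\Delta(\bC)$ and in a collar of $\partial\Sigma$, the almost-complex structure agrees with $\Jstd$, the perturbation $h$ and all its derivatives vanish on $\Delta(\bC)$ (and vanish outright in the collar), and the prescribed Hamiltonian is a scalar multiple of $H_t$ (so its derivative vanishes on $\Delta(\bC)$ and it itself vanishes in the collar). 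All of these continue to hold for the continuation data, so the local analysis is insensitive to the $s$-dependence far away.

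For touching the diagonal, the two ingredients in the proof of Lemma \ref{lem:touching_the_diagonal} were (a) positivity of intersection between the $J$-holomorphic, Hamiltonian-invariant divisor $\Delta(\bC)$ and the CRF strip $u_1$ via \cite[Lemma 4.3]{ganatrapormerleano}, and (b) a Schwarz reflection argument for boundary intersections with $\Delta(\gamma)$ using that the CRF equation degenerates to the standard CR equation in the collar $t\in[0,0.1]\cup[0.9,1]$, together with real analyticity of $\gamma\times\gamma$. Both ingredients are local in $s$ and depend only on the properties listed above, so they apply without modification, yielding the contradiction that $u_1$ either has the image of $\Delta(\bC)$ on its interior as an isolated positive intersection (incompatible with the zero winding number inherited from the family $u_r$ for $r<1$) or is forced by analytic continuation to lie entirely in $\Delta(\bC)$. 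For sphere bubbling, the proof of Lemma \ref{lem:sphere} used only that $H_2(\bC^2)=0$, and hence applies verbatim regardless of $s$-dependence.

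For disc bubbling I would again follow Lemma \ref{lem:disc_bubbling}: a disc bubble off $\gamma\times\gamma$ either meets $\Delta(\bC)$ only in its interior, in which case the winding-number argument on $(\gamma\times\gamma)\setminus\Delta(\gamma)$ excludes it outright, or it bubbles off at a point $(p,p)\in\Delta(\gamma)$, in which case I would Schwarz-reflect both the bubble and small rectangles $\rho_r$ in the degenerating family to produce holomorphic cylinders meeting $\Delta(\bC)$ positively, contradicting that $u_r$ is diagonal-avoiding for $r<1$. The main obstacle I would watch for is confirming uniform-in-$s$ (and uniform-in-$r$) control near the bubbling point, so that the Schwarz reflection can be performed on a single neighborhood $B_\epsilon$ valid for all $r$ close to $1$; this is ultimately controlled by the fixed standard region near $\Delta(\bC)$ and by the fact that $h^{12}_{st}$ vanishes to infinite order there, so nothing structurally new arises beyond what was already handled in the $\partial^2=0$ proof.
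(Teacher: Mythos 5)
Your proposal matches the paper's proof, which simply asserts that the arguments of Lemmas \ref{lem:touching_the_diagonal}, \ref{lem:sphere}, and \ref{lem:disc_bubbling} carry over; you have supplied a correct elaboration of why they do, isolating the right local features (standard $\Jstd$ near $\Delta(\bC)$ and in the boundary collar, vanishing of the Hamiltonian and perturbation in the collar, vanishing of $d(H^{12}_{st}+h^{12}_{st})$ on $\Delta(\bC)$, and $H_2(\bC^2)=0$). One small clarification: the infinite-order vanishing of $h^{12}_{st}$ at the diagonal is really needed for the breaking case (\Cref{lem:cont_diagonal_breaking} via \Cref{lem:boundedness_of_twist}) rather than for Schwarz reflection, which relies only on the collar conditions.
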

\begin{proof}
	The proofs of Lemmas \ref{lem:touching_the_diagonal}, \ref{lem:sphere}, and \ref{lem:disc_bubbling} carry over to this setting.
\end{proof}

For the case of diagonal breaking, we begin with a preparatory lemma which gives the reason that we have imposed the vanishing to infinite order of the partial derivatives of the perturbations $h_{st}$ at the diagonal.

\begin{lem}
	[Angular bound]
	\label{lem:boundedness_of_twist}
	Let $h_t \colon \bC^2 \rightarrow \bR$ be a time-dependent Hamiltonian all of whose partial derivatives vanish to infinite order at the diagonal $\Delta(\bC)$.
	
	We write $\pid \colon \bC^2 \rightarrow \bC \colon (z,w) \mapsto z-w$ for the projection.  Suppose that $C \subset \Delta(\bC)$ is a compact subset of the diagonal and for $\epsilon > 0$ write
	\[ C_\epsilon := \{ (z,w) \in \bC^2 : d((z,w), C) \leq \epsilon \} \]
	for the compact subset of points of distance at most $\epsilon$ from $C$.
	
	Let $0 < \theta < \pi$ and $\delta>0$.  Then by choosing $\epsilon$ small enough we can ensure that whenever $(z,w) \in C_\epsilon \setminus \Delta(\bC)$ we have
	\[ \arg (\pid(z,w)) - \arg(\pid(\Phi^1_{\theta H_t + h_t}(z,w))) \in (\theta - \delta, \theta + \delta) {\rm .} \]
\end{lem}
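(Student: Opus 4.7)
The plan is to work in the coordinates $(\zeta,\eta) = (z-w,z+w)$ on $\bC^2$, where the diagonal is $\{\zeta=0\}$ and the projection $\pid$ becomes the $\zeta$-coordinate. In these coordinates, the unperturbed Hamiltonian $\theta H_t = \frac{\theta \beta(t)}{4}|\zeta|^2$ depends only on $\zeta$, and its Hamiltonian vector field is (up to a sign convention) $X_{\theta H_t} = -i\,\theta\beta(t)\,\zeta\,\del_\zeta$, i.e.\ a rotation in the $\zeta$-plane fixing the $\eta$-plane. Since $\int_0^1 \beta(t)\,dt = 1$, the time-$1$ flow acts on $\zeta$ as multiplication by $e^{-i\theta}$. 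Thus for the unperturbed flow the angular difference in the statement is \emph{exactly} $\theta$, and it suffices to bound how much the perturbation $h_t$ twists this.

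The key input is that $h_t$ and all its partial derivatives vanish on $\Delta(\bC) = \{\zeta=0\}$. Consequently, on each compact slab $\{|\eta|\le R\}$ near $C$, for every $N$ there is a constant $C_N$ with $|X_{h_t}(z,w)| \le C_N |\zeta|^N$. I would then write the $\zeta$-component of the ODE for the flow $(z(t),w(t)) = \Phi^t_{\theta H_s+h_s}(z_0,w_0)$ as
\[ \dot\zeta(t) = -i\,\theta\beta(t)\,\zeta(t) + V(t,z(t),w(t)), \qquad |V| \le C_N |\zeta|^N, \]
and change variables $\xi(t) = e^{i\theta\int_0^t\beta(s)ds}\zeta(t)$ so that $\dot\xi = e^{i\theta\int_0^t\beta(s)ds} V$. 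Applied with $N = 2$, a standard differential-inequality (Gr\"onwall-type) argument shows that $|\xi(t)|$ stays within a factor $2$ of $|\xi(0)| = |\zeta(0)|$ for all $t\in[0,1]$, provided $|\zeta(0)|$ is small enough; applied with large $N$, this yields
\[ |\xi(1) - \xi(0)| \le C_N' |\zeta(0)|^N. \]
Taking $N = 2$ already suffices: writing $\rho = |\zeta(0)|$, the angular perturbation
\[ \bigl|\arg\xi(1) - \arg\xi(0)\bigr| \le \arctan\!\left(\tfrac{C_2'\rho^2}{\rho - C_2'\rho^2}\right) = O(\rho) \]
is less than $\delta$ once $\rho$ is small. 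Combined with $\zeta(1) = e^{-i\theta}\xi(1)$, this gives $\arg\zeta(0) - \arg\zeta(1) = \theta + O(\rho)$, which is the desired angular bound.

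The only subtlety to handle is that the trajectory starts in $C_\epsilon$ but could \emph{a priori} leave this neighborhood during time $[0,1]$. Here one uses compactness of $C$: the vector field $X_{\theta H_t + h_t}$ is uniformly bounded on any fixed neighborhood of $C$, so trajectories starting in $C_\epsilon$ remain in $C_{\epsilon'}$ for some $\epsilon' = \epsilon'(\epsilon)$ with $\epsilon' \to 0$ as $\epsilon \to 0$. Hence the constants $C_N$, and therefore the estimates above, can be taken uniform over all such trajectories, and by shrinking $\epsilon$ once at the end we make $O(\rho) < \delta$ for every $(z,w) \in C_\epsilon \setminus \Delta(\bC)$.

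The main obstacle — and the reason the infinite-order vanishing hypothesis is imposed — is precisely that $|\zeta(0)|$ can be arbitrarily small, so a merely bounded perturbation $V$ would swamp the $\zeta(0)$ term and destroy angular control. The estimate $|V| \le C_N|\zeta|^N$ with $N\ge 2$ is exactly what makes the angular error $o(1)$ rather than $O(1)$ as one approaches $\Delta(\bC)$; everything else is a routine ODE estimate in the $(\zeta,\eta)$-coordinates.
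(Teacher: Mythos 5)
Your proof is correct and takes essentially the same approach as the paper: both exploit the (at least) second-order vanishing of $h_t$ at $\Delta(\bC)$ to bound $|X_{h_t}|$ by a constant times $|\zeta|^2$ near the diagonal, deduce that the resulting angular-velocity error is $O(|\zeta|)$, and integrate over $[0,1]$ while using compactness to keep the estimate uniform and the trajectory inside the controlled neighborhood. Your write-up is a bit more explicit (the change of variables $\xi(t)=e^{i\theta\int_0^t\beta}\zeta(t)$ and the Gr\"onwall step), but the underlying mechanism is identical to the paper's.
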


\begin{proof}
	First note that the result certainly follows if $h_t$ is identically zero, since in this case the Hamiltonian flow is just rotation about the diagonal by angle $\theta$.  The lemma is stating that, close enough to the diagonal, the perturbed Hamiltonian looks as near enough to such a rotation as we like.
	
	By compactness and second-order vanishing of the derivatives, we can find some $M > 0$ such that
	\[ \vert d h_t \vert < M \vert (z,w) \vert^2 \]
	whenever $(z,w) \in C_\epsilon$.  Hence we see that the Hamiltonian vector field $X_{h_t}$ associated to $h_t$ satisfies
	\[ \vert X_{h_t} \vert < M \vert (z,w) \vert^2 {\rm .} \]
	
	Possibly shrinking $C_\epsilon$ if necessary by intersection with a closed tubular neighborhood of $\Delta(\bC)$, we thus ensure that the $\partial/\partial \theta$ component of $X_{h_t}$ is of norm at most $\delta \vert z - w \vert$ in magnitude.  Shrinking again if necessary to a neighborhood not leaving the previous neighborhood under the flow, we are done since
	\[ X_{\theta H_t + h_t} = X_{\theta H_t} + X_{h_t} {\rm .} \]
\end{proof}

\begin{lem}
[\bf Breaking at the diagonal]
	\label{lem:cont_diagonal_breaking}
	Suppose for $0 \leq r <1$ that $u_r \colon \Sigma \rightarrow \bC^2$ is a $1$-parameter family of diagonal-avoiding continuation strips.  Then the Gromov limit at $r=1$ is a never a broken strip that somewhere meets the diagonal.
\end{lem}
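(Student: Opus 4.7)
The plan is to mirror the proof of \Cref{lem:diagonal_breaking}, with \Cref{lem:boundedness_of_twist} supplying the new technical input. By the structure of the Gromov compactification for continuation strips, a broken Gromov limit $u_1$ decomposes into a chain consisting of pure $\theta_1$-Floer strips, at most one continuation strip, and pure $\theta_2$-Floer strips (in that order). A diagonal break occurs at the junction of two adjacent pieces in this chain that share a common constant trajectory at some $(p,p) \in \Delta(\gamma)$.

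If both adjacent pieces at the break are pure $\theta_i$-Floer strips ($i = 1$ or $2$), then \Cref{lem:diagonal_breaking} applies verbatim with angle $\theta = \theta_i$, giving a contradiction. The genuinely new case is when one of the two adjacent pieces is the continuation strip. By the structure of the Gromov compactification, such a break must occur at an asymptotic ($s = \pm \infty$) end of the continuation strip, so in the approximating family $u_r$ the corresponding ``neck'' develops at $|s|$ going to infinity. In this asymptotic regime, $\beta'(s) \equiv 0$ and $h^{12}_{st}$ coincides with the Floer perturbation $h^i_t$, so the continuation strip is locally indistinguishable from a pure $\theta_i$-Floer strip, and the argument of \Cref{lem:diagonal_breaking} applies to the neck.

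The role of \Cref{lem:boundedness_of_twist} is to control the angular error that arises if the relevant perturbation in the neck region does not vanish on a full neighborhood of $\Delta(\bC)$ but only vanishes there to infinite order. After applying the reparametrization of \Cref{subsec:reformulation} with angle $\theta_i$, \Cref{lem:boundedness_of_twist} ensures that the image of the $t = 1$ boundary of the reparametrized strip under $\pid$ lies in an arbitrarily small $\delta$-neighborhood of $e^{-i \theta_i} W$. Choosing the arc $A \subset \gamma$ sufficiently small relative to $\theta_1, \theta_2, \pi - \theta_1, \pi - \theta_2$ keeps this neighborhood disjoint from $W \setminus \{0\}$, preserving the wedge disjointness that drives the winding-number argument. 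The remainder of the proof---constructing the slice loop, establishing non-trivial winding around a point $a$ in the same complement component as the origin, and applying positivity of intersection with the $\Jstd$-holomorphic divisor $\Delta(\bC)$ to produce an interior zero of $\pid \circ u_r$ for $r$ close to $1$---proceeds as in \Cref{lem:diagonal_breaking}. The main obstacle, relative to that setting, is controlling the angular error from the non-compactly-supported perturbation; \Cref{lem:boundedness_of_twist} is precisely the tool that absorbs it.
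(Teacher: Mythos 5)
Your proposal is in the right spirit—it correctly identifies that one should mimic the proof of \Cref{lem:diagonal_breaking}, use \Cref{lem:boundedness_of_twist} as the new input, and argue topologically via winding numbers in wedge-shaped regions. But it contains an internal inconsistency and misses the main technical point that the paper flags and handles.

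The inconsistency: you argue that the diagonal break must occur at an asymptotic end of the continuation strip, so that in the neck the strip-dependent Hamiltonian coincides with a fixed Floer Hamiltonian $\theta_iH_t + h^i_t$; but then you invoke \Cref{lem:boundedness_of_twist} to absorb angular error from a perturbation that ``vanishes only to infinite order'' at $\Delta(\bC)$. These two claims cannot both be doing work. If the neck really sits entirely in the asymptotic regime where $h^{12}_{st} = h^i_t$, then the perturbation vanishes on a full neighborhood of $\Delta(\bC)$ (condition~(2) of \Cref{defin:admissible_triple}), the reparametrized strip $v_r$ is genuinely $\Jstd$-holomorphic there, and \Cref{lem:boundedness_of_twist} is superfluous. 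The paper does \emph{not} attempt this reduction, and for good reason: it states explicitly that the reformulated strips $v_r$ need \emph{not} be holomorphic, ``because the Hamiltonian on $u_r$ is possibly strip-dependent and not merely time-dependent.'' The lemma is stated to cover the whole continuation package, including chain-homotopy strips where the strip-dependent Hamiltonian itself varies with $r$, and there the interval $[\widetilde{R_r},\widetilde{R_r}']$ cannot be cleanly confined to a region where the base Hamiltonian is a single $\theta_iH_t$. This is precisely why the $t=1$ boundary of $v_r$ is only controlled to lie in a \emph{union} of rotated wedges $\bigcup_{0 \leq \theta \leq \theta^+ - \theta^-}e^{i(\theta - \theta^+)}W$, and why \Cref{lem:boundedness_of_twist} is needed at all: the infinite-order-vanishing perturbation $h^{12}_{st}$ contributes a small angular drift that must be absorbed into these wedges.

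The missing idea: because $v_r$ need not be holomorphic on the neck, you cannot invoke the final step of \Cref{lem:diagonal_breaking} (``since $\pid\circ v_r$ is holomorphic, the winding number around $a$ is positive'') nor ``positivity of intersection with $\Delta(\bC)$.'' The paper replaces this with a purely topological argument: positivity of the winding of $\Lambda^- \cup \Lambda^+$ around $a$ is established at $r = 1$ using holomorphicity of the \emph{limit} pieces $v_1^\pm$ in the asymptotic regime, and is then transferred to $v_r$ for $r$ near $1$ by the wedge-containment conditions (which guarantee $0$ and $a$ stay in the same complement component of the boundary loop) together with continuity of winding number—at which point the degree argument that $\pid \circ v_r$ hits $0$ requires no holomorphicity. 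Your write-up leaves this transfer-from-the-limit step implicit and instead appeals to positivity of intersection, which is exactly the move the paper says is unavailable here.
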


\begin{proof}
	As in Lemma \ref{lem:diagonal_breaking}, we may rule out meeting the diagonal anywhere, except for the possibility of one of the breaking points being at the diagonal.  The rest of the proof will take the proof of Lemma \ref{lem:diagonal_breaking} as its guide, but some modification will be required.
	
	Let us assume for concision of notation that we have a single breakpoint, as this assumption will not materially affect the argument.
	
	Then we have, say $u_1 = u_1^- \# u_1^+$ so that $u_1^-(s,t), u_1(-s,t) \rightarrow (p,p) \in \Delta(\gamma)$ as $s \rightarrow \infty$.
	
	The strip $u_1^-$ satisfies a Cauchy-Riemann-Floer equation in which the total Hamiltonian $\cH^-_{st}$ is only time-dependent for large enough values of $s \gg 0$.  Within distance $\w(\gamma)/2$ of the diagonal $\Delta(\bC)$, this time-dependent Hamiltonian agrees with $\theta^- H_t$ for some angle $0 < \theta^- < \pi$.  Similarly, the strip $u_1^+$ satisfies a Cauchy-Riemann-Floer equation in which the Hamiltonian agrees with $\theta^+ H_t$ for small enough $s \ll 0$  and within $\w(\gamma)/2$ of $\Delta(\bC)$.
	
	We proceed as in Lemma~\ref{lem:diagonal_breaking} and choose 
	a large $R \gg 0$, such that the reformulated versions $v_1^-$ and $v_1^+$ of $u_1^-$ and $u_1^+$ respectively (following Subsection \ref{subsec:reformulation}) are both holomorphic with respect to $\Jstd$ when restricted to $[R,\infty) \times [0,1]$ and $(-\infty, -R] \times [0,1]$ respectively.  Furthermore $R$ is chosen large enough so that $v_1^-$ and $v_1^+$ satisfy the following:
\begin{align*}
\pid \circ v_1^-( [R,\infty) \times \{ 0 \} ) \cup  \pid \circ v_1^+( (-\infty, -R] \times \{ 0 \} ) &\subset W {\rm ,} \\
\pid \circ v_1^-( [R,\infty) \times \{ 1 \} )  &\subset e^{-i \theta^-}W {\rm ,} \\
{\rm and} \,\, \pid \circ v_1^+( (-\infty, -R] \times \{ 1 \} ) &\subset e^{-i \theta^+}W {\rm .}
\end{align*}
And we have $W \cap e^{-i \theta^-}W = W \cap e^{-i \theta^+}W = e^{-i \theta^-}W \cap e^{-i \theta^+}W = \{ 0 \}$.

	\begin{figure}
	\labellist
	\pinlabel {$a$} at 625 830
	\pinlabel {$W$} at 790 -50
	\pinlabel {$e^{-i \theta^-} W$} at -30 125
	\pinlabel {$e^{-i \theta^+} W$} at -160 450
	\endlabellist
	\centering
	\includegraphics[scale=0.1]{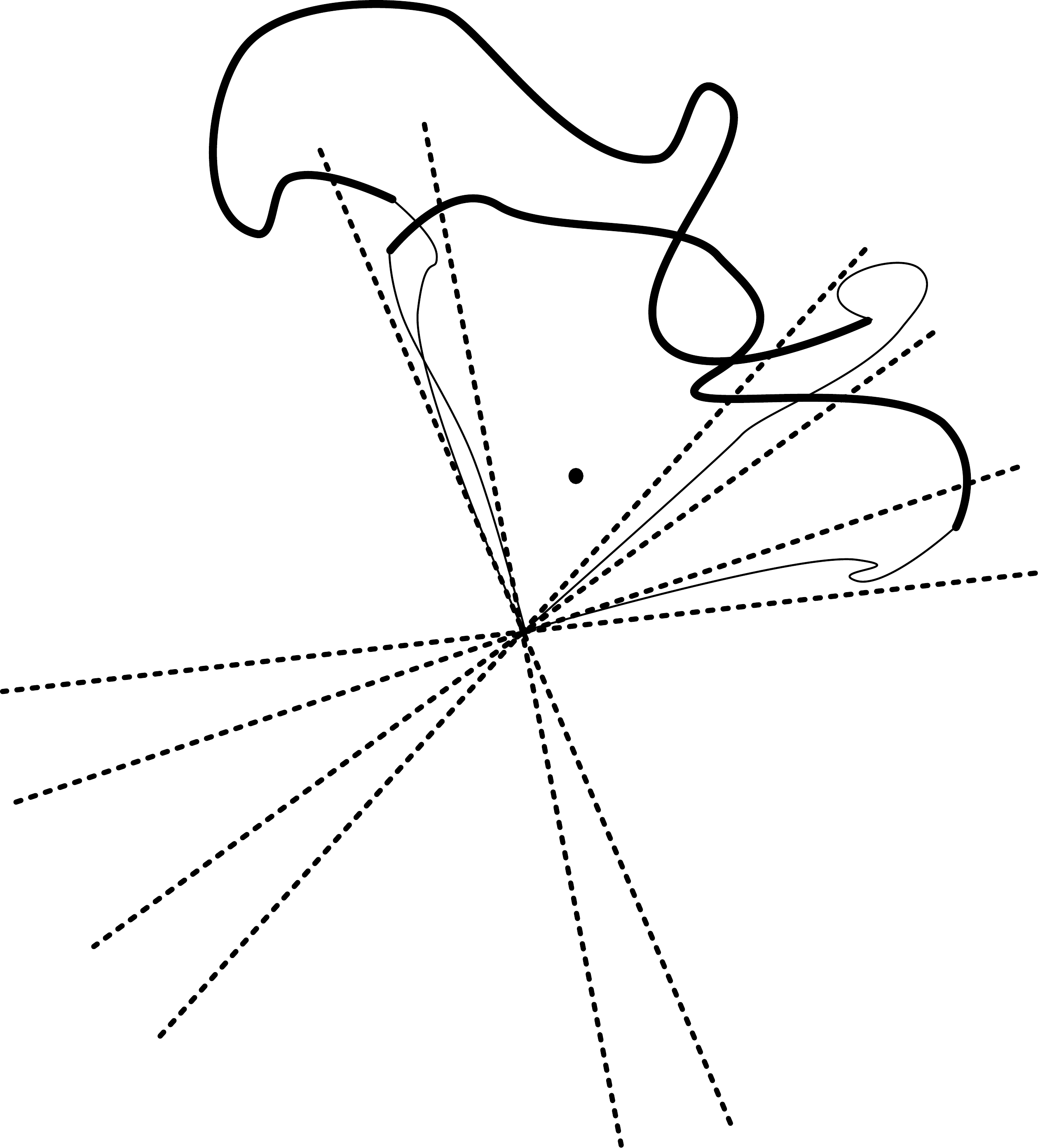}
	\caption{We show the three regions $W$, $e^{-i \theta^-}W$, and $e^{-i \theta^+}W$ of the plane.  Within them we have drawn in bold both $\pid \circ v_1^-(\{R\} \times [0,1])$ and $\pid \circ v_1^+(\{-R\} \times [0,1])$.  Using a finer nib we have drawn $\pid \circ v_1^-([R, \infty) \times \{ 0,1 \})$ and $\pid \circ v_1^+((-\infty,-R] \times \{ 0,1 \})$.  By adding in $\{ 0 \} = \pid \circ v_1^-(\{\infty\} \times [0,1])$ to $\pid \circ v_1^-(\{R\} \times [0,1])$ and $\pid \circ v_1^-([R, \infty) \times \{ 0,1 \})$, we obtain the image of a closed loop $\Lambda$ (possibly self-intersecting) in the plane.  The loop $\Lambda$ winds around the point in the plane labelled with an $a$.}
	\label{fig:bojackhorseman}
\end{figure}

We illustrate the situation in Figure \ref{fig:bojackhorseman}.
An important point for us to note, looking at Figure \ref{fig:bojackhorseman}, is that $\pid \circ v_1^-( [R,\infty) \times \{ 0 \} )$ and  $\pid \circ v_1^+( (-\infty, -R] \times \{ 0 \} )$ both lie in the same component of $W \setminus \{ 0 \}$.  Similarly, if $\pid \circ v_1^+( (-\infty, -R] \times \{ 1 \} )$ lies in component $C$ of $e^{-i \theta^+} W \setminus \{ 0 \}$, then $\pid \circ v_1^-( [R,\infty) \times \{ 1 \} )$ lies in component $e^{i(\theta^+ - \theta^-)}C$ of $e^{-i \theta^-} W \setminus \{ 0 \}$.  These points both follow from the hypothesis that $u_r$ avoids the diagonal $\Delta(\bC)$ for $r$ close to $1$.

As in the proof of Lemma \ref{lem:diagonal_breaking}, we have also chosen a point $a \in \bC$, in this case lying outside $W \cup e^{-i \theta^-} W \cup e^{-i \theta^+} W$, which is at a distance to the origin of less than half the minimal distance from the origin of $\pid \circ v_1^-(\{R\} \times [0,1]) \cup \pid \circ v_1^+(\{-R\} \times [0,1])$.  The point $a \in \bC$ is chosen so that the loop
\[ \Lambda^- := \pid \circ v_1^-(\{R\} \times [0,1]) \cup \pid \circ v_1^-( [R,\infty) \times \{ 0 \} ) \cup \pid \circ v_1^-([R, \infty) \times \{ 0,1 \}) \cup \{ 0 \} \]
has non-zero winding number around $a$.  This means that we must have $a \in \pid \circ v_1^- ((R, \infty) \times (0,1))$, so that in fact the winding number is strictly positive due to the holomorphicity of $v_1^-$.  Since $v_1^+$ is holomorphic when restricted to $(-\infty, -R] \times [0,1]$, it follows that the loop
\[ \Lambda^+ := \pid \circ v_1^+(\{-R\} \times [0,1]) \cup \pid \circ v_1^+( (-\infty,-R] \times \{ 0 \} ) \cup \pid \circ v_1^+((-\infty, -R] \times \{ 0,1 \}) \cup \{ 0 \} \]
(when given the natural orientation) must have non-negative winding number around $a$.

We now turn to Figure \ref{fig:malcolminthemiddle}.

	Again, similarly to our approach in the proof of Lemma \ref{lem:diagonal_breaking} we now wish to consider approximations to the broken strips by reformulations $v_r$ of strips $u_r$ following Subsection \ref{subsec:reformulation}.  A subtlety here is that the reformulation will \emph{not} necessarily result in holomorphic strips (with respect to a reformulated almost complex structure) because the Hamiltonian on $u_r$ is possibly strip-dependent and not merely time-dependent.  Our interest is less analytic than topological, however.
	
	We observe that we may choose $\wt{R_r}, \wt{R_r}' \in \bR$ with $\wt{R_r} < \wt{R_r}'$ so that
	\[ v_r(\partial( [\wt{R_r},\wt{R_r}'] \times [0,1])) \rightarrow v_1^-( \partial( [R,\infty]\times[0,1] )) \cup v_1^+(\partial( [-\infty,R] \times [0,1] )) \]
	and, in particular,
	\[ \pid \circ v_r(\partial([\wt{R_r},\wt{R_r}' ] \times [0,1]) ) \rightarrow \Lambda^- \cup \Lambda^+ \,\, {\rm as} \,\, r \rightarrow 1 {\rm .} \]
	We make our choices so that for all $r$ sufficiently close to $1$ we have the following
\begin{itemize}
	\item $\pid \circ v_r( \{ \wt{R_r}, \wt{R_r}' \} \times [0,1])$ is distance at least $3\delta/4$ from the origin,
	\item $\pid \circ v_r( [\wt{R_r}, \wt{R_r}'] \times \{ 0 \} ) \subset ({\rm a} \,\, {\rm component} \,\, {\rm of} \,\, W \setminus \{0\})$,
	\item and $\pid \circ v_r( [\wt{R_r}, \wt{R_r}'] \times \{ 1 \} ) \subset ({\rm a} \,\, {\rm component} \,\, {\rm of} \,\, \bigcup_{0 \leq \theta \leq \theta^+ - \theta^-} e^{i (\theta - \theta^+)}W \setminus \{0\})$.
\end{itemize}

The first and second points follow simply from continuity, while the third follows from Lemma \ref{lem:boundedness_of_twist}\footnote{Technically from an $r$-dependent version of Lemma \ref{lem:boundedness_of_twist}, since the perturbation $h^r_{st}$ may have smooth dependence on $r$.  But this is again a simple application of compactness, so we preferred to relegate mention of it to this footnote.}, so long as one is working within a neighborhood $C_\epsilon$ as in the statement of that Lemma.  This can be achieved by starting the proof of the current lemma again, but with a possibly larger value of $R$ to begin with so that
\[ v_1^-(\partial([R,\infty) \times [0,1])) \,\,\, {\rm and} \,\,\, v_1^+(\partial((-\infty, -R] \times [0,1]) ))\]
both lie in such a neighborhood.

\begin{figure}
	\labellist
	\pinlabel {$a$} at 625 830
	\pinlabel {$W$} at 790 -50
	\pinlabel {$e^{-i \theta^-} W$} at -30 125
	\pinlabel {$e^{-i \theta^+} W$} at -160 450
	\endlabellist
	\centering
	\includegraphics[scale=0.1]{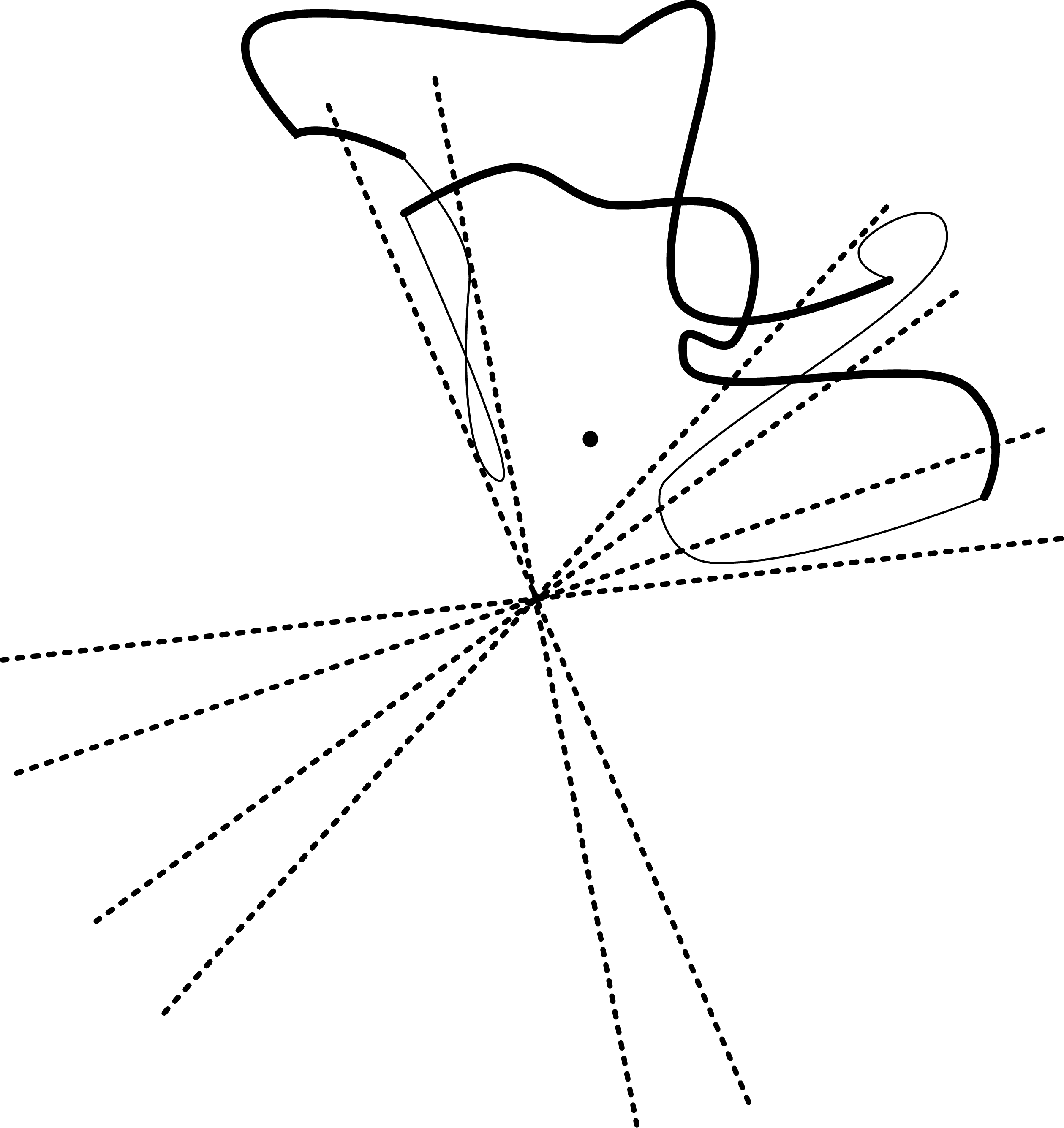}
	\caption{We have drawn in bold the two arcs $\pid \circ v_r( \{ \wt{R_r}, \wt{R_r}' \} \times [0,1])$ and with a finer nib $\pid \circ v_r( [\wt{R_r}, \wt{R_r}'] \times \{ 0 \} ) \subset W$ and $\pid \circ v_r( [\wt{R_r}, \wt{R_r}'] \times \{ 1 \} )$.  Their union gives the image of a loop in the plane.  The point $a \in \bC$ lies in the same connected component as $0$ of the complement of this loop.}
	\label{fig:malcolminthemiddle}
\end{figure}

Since $0$ and $a$ lie in the same component of $\bC$ as the complement of the loop $\pid \circ v_r(\partial([\wt{R_r},\wt{R_r}' ] \times [0,1]) )$, it follows that the winding number of this loop is the same about $0$ as about $a$.  On the other hand, for $r$ sufficiently close to $1$, this winding number agrees with the winding number of $\Lambda^- \cup \Lambda^+$ around $a$, which is positive.  Thus we must have that
\[ v_r (\partial((\wt{R_r},\wt{R_r}' ) \times (0,1)) ) \cap \Delta(\bC) \not= \emptyset {\rm ,} \]
giving a contradiction.
\end{proof}

\subsubsection{Conclusion.}
We have shown that the continuation package of maps continues to function without modification when restricted to our setting of diagonal-avoiding strips.  Thus we have well-defined homology groups $\JF(\gamma, \theta)$ for pairs of a real analytic Jordan curve and angle $0 < \theta < \pi$.  These are defined as limits of directed systems of homology groups depending on choices of analytic data.  We further have well-defined isomorphisms $\JF(\gamma, \theta_1) \rightarrow \JF(\gamma, \theta_2)$ for any pairs $0 < \theta_1, \theta_2 < \pi$.

\subsubsection{An observation and a warning.}
\label{sec:warning}
We close this subsection with an observation about the proof of Lemma \ref{lem:cont_diagonal_breaking}: that it was important for the winding number arguments that we had $0 < \theta^-, \theta^+ < \pi$.
There is nothing to stop the reader defining Jordan Floer homology groups for pairs $(\gamma, \theta)$ in which $\theta < 0$ or $\theta > \pi$, but they should be wary of an attempt, for example, to define a continuation map following the principles above to give an isomorphism $\JF(\gamma, - \theta) \rightarrow \JF(\gamma,  \theta)$ for $0 < \theta < \pi$.
Such an attempt is doomed to fail simply by virtue of the support of the homologies in different Maslov degrees: on the left the support is in degrees $0,1$, while on the right it is in degrees $1,2$, as we shall see in \Cref{subsec:morse}.
In this paper we only use angles within the range $0 < \theta < \pi$, so we do not investigate this further here.

\subsection{Comparison with Morse homology.}
\label{subsec:morse}
The existence of the chain homotopy equivalences of the previous section establishes that the chain homotopy type of $\JFC_*(\gamma,\theta,h_t,J_t)$ depends only on $\gamma$.
By a familiar argument originating with Floer \cite[Theorem 2]{floer_witten}, a judicious choice of data identifies this complex with a Morse chain complex from which we may compute the isomorphism type of the homology group.
In our setting, we obtain an identification with a Morse-Bott chain complex of the pair $(\gamma \times \gamma, \Delta(\gamma)) \approx (S^1 \times S^1, \Delta(S^1))$, which yields:

\begin{thm}
[Isomorphism type]
\label{thm:jf}
For every admissible quadruple $(\gamma,\theta,h_t,J_t)$,
we have
\[
\JF_*(\gamma,\theta) := H_*(\JFC_*(\gamma,\theta,h_t,J_t)) \approx (\bF_2)_{(2)} \oplus (\bF_2)_{(1)}
\]
\end{thm}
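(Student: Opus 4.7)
The strategy is Floer's original one: use the continuation invariance to reduce to a convenient choice of data, then identify the Floer complex with a Morse-type complex whose homology can be read off topologically.

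First I would invoke the continuation package developed in \Cref{subsec:continuation}: the chain homotopy type of $\JFC_*(\gamma,\theta,h_t,J_t)$, and hence its homology, depends only on $\gamma$ and not on the choices of $\theta \in (0,\pi)$, $h_t$, or $J_t$. So it suffices to compute $\JF_*(\gamma,\theta)$ for one convenient admissible quadruple. Following Floer, I would take $\theta$ very close to $0$ and choose admissible data so that $L_1 = \Phi^1_{\theta H_t + h_t}(L_0)$ is a small $C^1$-perturbation of $L_0 = \gamma \times \gamma$, sitting inside a Weinstein neighborhood of $L_0$ identified with a piece of $T^* L_0$. In this regime the classical Floer-to-Morse theorem identifies non-constant Hamiltonian trajectories with the non-minimum critical points of $H|_{L_0}(z,w) = |z-w|^2/4$, and low-index strips with negative gradient flowlines of $H|_{L_0}$.

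Two features specialize the standard picture to our setting. First, the minimum locus of $H|_{L_0}$ is exactly the clean intersection $\Delta(\gamma)$, which contributes no generators to $\JFC_*$ since only non-constant trajectories are counted. Second, the diagonal-avoiding constraint on Floer strips passes in the Floer-to-Morse limit to the condition that gradient flowlines do not limit to points of $\Delta(\gamma)$. Together these identify $\JFC_*(\gamma,\theta,h_t,J_t)$ with the relative Morse complex $CM_*(\gamma \times \gamma, \Delta(\gamma); H)$, whose homology computes $H_*(\gamma \times \gamma, \Delta(\gamma); \bF_2)$. Topologically, $\gamma \times \gamma \approx T^2$ and $\Delta(\gamma)$ represents the diagonal class $(1,1) \in H_1(T^2;\bF_2)$; since cutting $T^2$ along $\Delta(\gamma)$ yields an annulus, the long exact sequence of the pair collapses to give $H_2(T^2,\Delta) = H_1(T^2,\Delta) = \bF_2$ and all other groups zero. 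Matching this with the Maslov grading convention (recall the paper's $\bZ$-grading enhancement from diagonal-avoiding cappings places the top class in degree~$2$) yields $\JF_*(\gamma,\theta) \approx (\bF_2)_{(2)} \oplus (\bF_2)_{(1)}$.

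The main obstacle will be the Floer-to-Morse step: executing this identification in the presence of the clean intersection along $\Delta(\gamma)$ requires care, since a priori strips could leak mass into a neighborhood of the minimum set in ways a purely transverse-intersection argument would not control. The required estimates are essentially those set up in \Cref{subsec:admissible_curves,subsec:ac_structures} (the width bound, the standard complex structure near $\Delta(\bC)$, and \Cref{lem:distant_trajectory}), which confine all non-constant trajectories to the region $|z-w| > \w(\gamma)/2$, separating them uniformly from the Morse-Bott minimum. A secondary check is that the $\bZ$-grading induced by the preferred cappings of \Cref{lem:capping} matches the topological degree in the relative pair; this should follow by a direct Maslov index computation for a single model generator (e.g.\ an elegantly inscribed square on a nearly-circular curve), after which additivity under continuation propagates the identification to all data.
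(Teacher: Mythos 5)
Your strategy coincides with the paper's: invoke continuation invariance, take $\theta$ (and $h_t$) small so that $L_1$ sits in a Weinstein neighborhood of $L_0$, identify the complex with a relative Morse--Bott complex $CM_*(\gamma\times\gamma,\Delta(\gamma);\overline{H}+h)$, and read off $H_*(T^2,\Delta(S^1);\bF_2)\approx(\bF_2)_{(2)}\oplus(\bF_2)_{(1)}$. That much matches \Cref{subsec:morse} essentially verbatim.

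However, you have misidentified the crux of the Floer-to-Morse step. You say the main concern is that strips might ``leak mass into a neighborhood of the minimum set'' and propose the width bound plus \Cref{lem:distant_trajectory} as the remedy. Those estimates keep \emph{trajectories} away from $\Delta(\bC)$, which matters for regularity, but they are not what makes the Floer differential coincide with the Morse differential. The classical argument of Oh a priori yields only a spectral sequence: \emph{low-energy} strips give the Morse differential, while higher-energy strips (which need not stay in the Weinstein neighborhood) contribute higher pages. To get an \emph{isomorphism} of complexes you must show there are no higher-energy strips. The paper's mechanism for this is an action computation: for a diagonal-avoiding strip $u$ from $\tau$ to $\tau'$, the preferred cappings give $E(u)=\cA(\tau')-\cA(\tau)$, and as $\theta,h_t\to 0$ the actions of all generators in $\cG(\gamma,\theta,h_t)$ tend uniformly to $0$. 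Hence every strip in $\cM^\Delta_1(\gamma,\theta,h_t,J_t)$ has energy below the Weinstein threshold $\hbar$, and the entire differential is Morse. Without this energy bound your claimed identification of $\JFC_*$ with $CM_*(\gamma\times\gamma,\Delta(\gamma);H)$ is unjustified; the width bound alone does not rule out large strips that exit $N$.
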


In preparation, we review an argument of Oh \cite{oh1996} which leads to a spectral sequence from the Morse homology group $HM_*(L_0)$ to the self-Floer homology group $\mathrm{HF}_*(L_0,L_0)$.
With $\gamma$ fixed, choose the angle $\theta > 0$ sufficiently small so that the Lagrangian $R_\theta(\gamma \times \gamma)$ is contained within a Weinstein neighborhood $N$ of $L_0 = \gamma \times \gamma$ in $\bC^2$.
If we perturb $R_\theta(\gamma \times \gamma)$ by a small Hamiltonian into $L_1'$ made transverse to $L_0$ and still contained within $N$, then there exists a small energy $\hbar > 0$ such that every strip $u$ counted by the differential in the usual Floer chain complex $CF_*(L_0,L_1',J_t)$ with energy $E(u) < \hbar$ has image contained in $N$.
Moreover, for a suitable choice of almost-complex structure $J_t$, metric $g$ on $L_0$, and perturbation $h$, the assignment $u \mapsto u(s,0)$ puts these strips into one-to-one correspondence with Morse trajectories counted by the differential on the Morse chain complex $CM_*(L_0,\overline{H}+h,g)$.
Here $\overline{H}$ denotes the restriction of our Hamiltonian $H(z,w) = \frac14 |z-w|^2$ to $L_0$ and $h : L_0 \to \bR$ is chosen to make $\overline{H} + h$ a Morse function.

The higher differentials in the Oh spectral sequence count strips of higher energy in the Floer chain complex.
By contrast, counting the low-energy strips in our setting recovers the entire Jordan Floer chain complex.

\begin{proof}
We sketch the modification to Oh's argument required in our setting.
We work with the Lagrangian $L_1 = \Phi_{\theta H_t + h_t}^{-1}(\gamma \times \gamma)$ for an admissible choice of $h_t$, chosen sufficiently small.
In this case, as before, all low-energy strips in $\cM^\Delta_1(\gamma,\theta,h_t,J_t)$ are contained in $N$.
Now, however, the assignment $u \mapsto u(s,0)$ put these strips into one-to-one correspondence with Morse trajectories between nondegenerate critical points counted by the differential on the Morse-Bott chain complex $CM_*(L_0,\overline{H}+h,g)$.
Here $h$ is chosen so that $\overline{H} + h$ is a non-negative Morse-Bott function, its nondegenerate critical points are the transverse points of $L_0 \cap L_1$, and $\Delta(\gamma) = \overline{H}^{-1}(0) = (\overline{H}+h)^{-1}(0)$ is a circle of critical points.
It follows that we recover precisely the trajectories counted by the differential on the relative Morse-Bott chain complex $CM_*(L_0,\Delta(L_0),\overline{H}+h,g)$.
As noted, its homology is nothing other than $H_*(S^1 \times S^1, \Delta(S^1);\bF_2) \approx (\bF_2)_{(2)} \oplus (\bF_2)_{(1)}$.

It remains to explain why all of the strips in $\cM^\Delta_1(\gamma,\theta,h_t,J_t)$ have low energy, so that in fact $\JFC_*(\gamma,\theta,h_t,J_t)$ is isomorphic to the relative Morse-Bott complex.
As we explain in \Cref{sec:actions_and_rectangles}, each generator $\tau \in \cG(\gamma,\theta,h_t)$ has an associated {\em action} $\cA(\tau) = \cA_{\theta H_t + h_t}(\tau)$.
Suppose that $u \in \cM^\Delta(\gamma,\theta,h_t,J_t)$ is a strip from $\tau$ to $\tau'$.
Then $u \# \widehat{\tau}$ is a capping of $\tau'$ disjoint from $\Delta(\bC)$, so $[u \# \widehat{\tau}] = [\widehat{\tau'}]$.
It follows that $E(u) = \int u^* \omega = \cA(\tau')-\cA(\tau)$.
The actions of the trajectories in $\cG(\gamma,\theta,h_t)$ can be made uniformly and arbitrarily close to 0 by choosing both $h_t$ and $\theta$ sufficiently close to 0.
It follows that all strips in $u \in \cM^\Delta(\gamma,\theta,h_t,J_t)$ have sufficiently low energy $E(u) < \hbar$ to guarantee that their images are contained in $N$.
\end{proof}

Lastly, we remark that if $H$ restricts to a Morse function on $\gamma \times \gamma$, then the generating set of $CM_*(\gamma \times \gamma, \Delta(\gamma),\overline{H},g)$ admits a nice interpretation.
Namely, it consists of the {\em binormals} of $\gamma$, i.e. pairs $(z,w) \in \gamma \times \gamma - \Delta(\gamma)$ such that the tangent lines to $\gamma$ at $z$ and $w$ are perpendicular to the line segment $\overline{zw}$.
Thus we obtain a limiting group $JF(\gamma,0,H)$ which we may regard as the {\em Jordan Morse} homology of $\gamma$.

\section{Actions and rectangles.}
\label{sec:actions_and_rectangles}
Suppose that $\tau \in \Omega(\gamma \times \gamma)$ (the space of paths starting and ending on $\gamma \times \gamma$) and that $\tau$ is disjoint from the diagonal $\Delta(\bC)$.
Its {\em action} with respect to a time-dependent Hamiltonian $\cH_t$ is given by
\[
\cA_{\cH_t}(\tau) = \int_0^1 \cH_t \circ \tau(t) dt - \int_{[0,1]^2} {\widehat{\tau}}^*\omega
\]
where $\widehat{\tau}$ denotes a preferred capping of $\tau$.

We shall mostly be concerned with the action of non-constant trajectories $\tau \in \cG(\gamma, \theta,0)$ of the Hamiltonian $\theta H_t$.  Recall that these correspond to (nondegenerate) inscribed $\theta$-rectangles $Q \subset \gamma$ (we think of a rectangle as its set of vertices).

We now look at an example in order to get a handle on what the action $\cA_{\theta H_t} (\tau_Q)$ is telling us about the inscribed rectangle $Q$.  Figure \ref{fig:icecreamaction} shows an example of an {\em elegantly} inscribed $\theta$-rectangle: the Jordan curve is isotopic through Jordan curves into the rectangle $Q$, keeping the vertices of $Q$ fixed.
The computation of the action more involved for an inelegantly inscribed rectangle, but the case of an elegant inscription is nevertheless instructive.

\begin{figure}
	\labellist
	\pinlabel {$\gamma$} at 350 150
	\pinlabel {$\theta$} at 205 255
	\pinlabel {$r_1$} at 345 270
	\pinlabel {$r_2$} at 220 385
	\pinlabel {$r_3$} at -10 165
	\pinlabel {$r_4$} at 125 50
	\endlabellist
	\centering
	\includegraphics[scale=0.3]{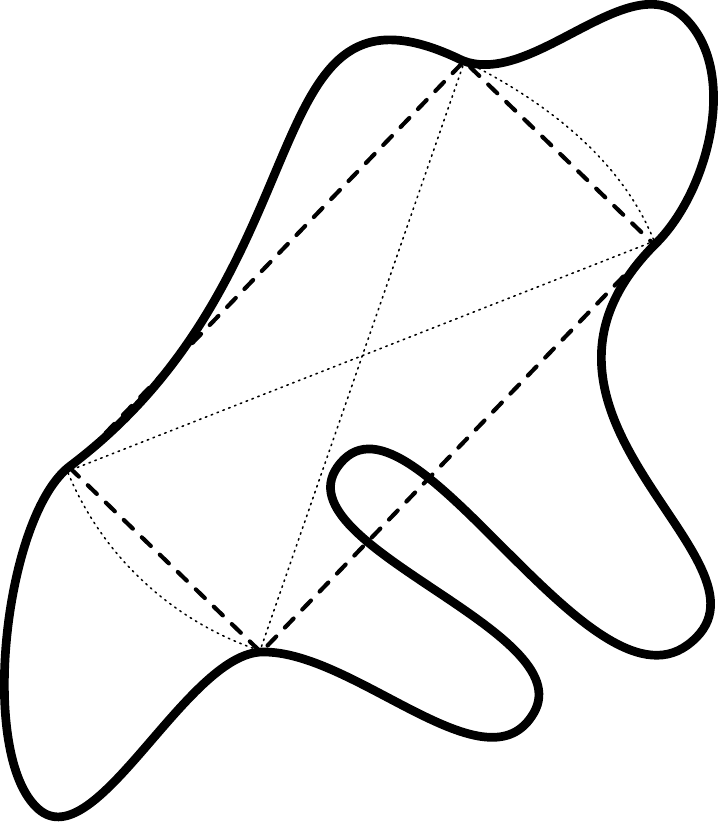}
	\caption{An example of an inscribed rectangle $Q = \{ r_1, r_2, r_3, r_4 \} \subset \gamma \subset \bC$ of aspect angle $\theta$.  We have included the diagonals of the rectangle and two circular arcs of diameter equal to the diameter of the rectangle.}
	\label{fig:icecreamaction}
\end{figure}

The action $\cA_{\theta H_t}(\tau_Q)$ is the sum of two terms.  Notice that $\theta H_t(\tau_Q(t)) = \theta  \beta(t) \rad(Q)^2$, where $\rad(Q)$ denotes half the length of the diagonal of $Q$.

The first term is just
\begin{align*}
\int_0^1 \theta H_t \circ \tau_Q(t) dt &= \theta \rad(Q)^2 \int^1_0 \beta(t) dt = \theta\rad(Q)^2 {\rm ,}
\end{align*}
and this is nothing more than the total area of the two regions shown in Figure \ref{fig:first_term_of_action}.

\begin{figure}
	\labellist
	\endlabellist
	\centering
	\includegraphics[scale=0.3]{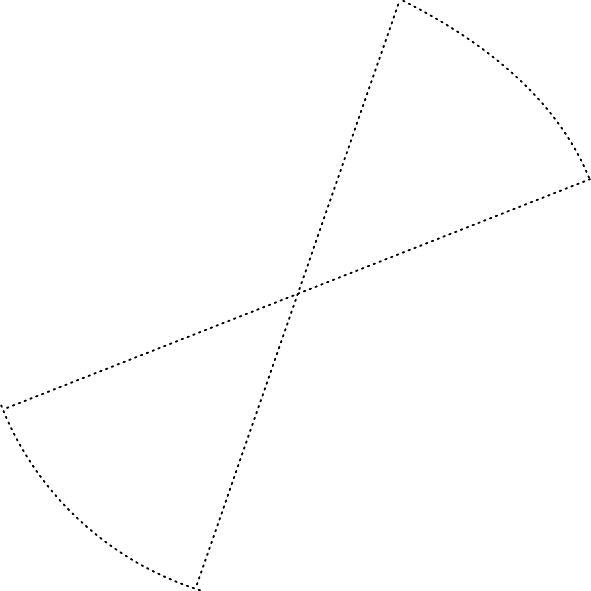}
	\caption{The first term in the formula for the action $\cA(\tau_Q)$ is the total area of the regions shown.}
	\label{fig:first_term_of_action}
\end{figure}

To compute the second term in the formula for the action, we first choose a preferred capping $\widehat{\tau_Q}$.
In \Cref{fig:first_term_of_action}, there is an embedded disk $D_1$ cobounded by the circular arc from $r_1$ to $r_2$ and the short arc of $\gamma$ from $r_1$ to $r_2$.
From it we obtain a map $\phi_1 : [0,1] \times [0,1] \to \bC$ with image $D_1$ such that $\phi_1(t,0)$ parametrizes the circular arc from $r_1$ to $r_2$ at constant speed for $t \in [0,1]$, $\phi_1(s,0) = r_1$, $\phi_1(s,1) = r_2$, and $\phi_1(1,t) \in \gamma$ for all $s,t \in [0,1]$.
There is another embedded disk $D_2$, giving rise to a map $\phi_2$, with the points $r_3,r_4$ in place of $r_1,r_2$.
Then $\widehat{\tau_Q} = (\phi_1,\phi_2)$ gives a capping of $\tau_Q$ with complementary boundary $P \subset \gamma \times \gamma$, and it is disjoint from $\Delta(\bC)$, because the images of $\phi_1$ and $\phi_2$ are the disjoint disks $D_1$ and $D_2$.
A similar method for constructing a preferred capping works with minor modification for any elegantly inscribed rectangle.



Continuing with the computation of the second term of the action, we consider the coordinate projections
\[ \pi_1 \colon \bC^2 \longrightarrow \bC \colon (z,w) \longmapsto z, \,\, {\rm and} \,\, \pi_2 \colon \bC^2 \longrightarrow \bC \colon (z,w) \longmapsto w {\rm .} \]
We have that
\[
\int_{[0,1]^2} \widehat{\tau}^* \omega = \int_{[0,1]^2} (\pi_1\circ \widehat{\tau_Q})^* dxdy + \int_{[0,1]^2}  (\pi_2\circ \widehat{\tau_Q})^* dxdy
\]
and this is nothing more than the sum of the areas bounded by the loops
\[ \pi_1(\tau_Q \cup P) \,\, {\rm and} \,\, \pi_2(\tau_Q \cup P) {\rm.} \]
We show these in Figure \ref{fig:second_term_of_action}.
\begin{figure}
	\labellist
	\endlabellist
	\centering
	\includegraphics[scale=0.3]{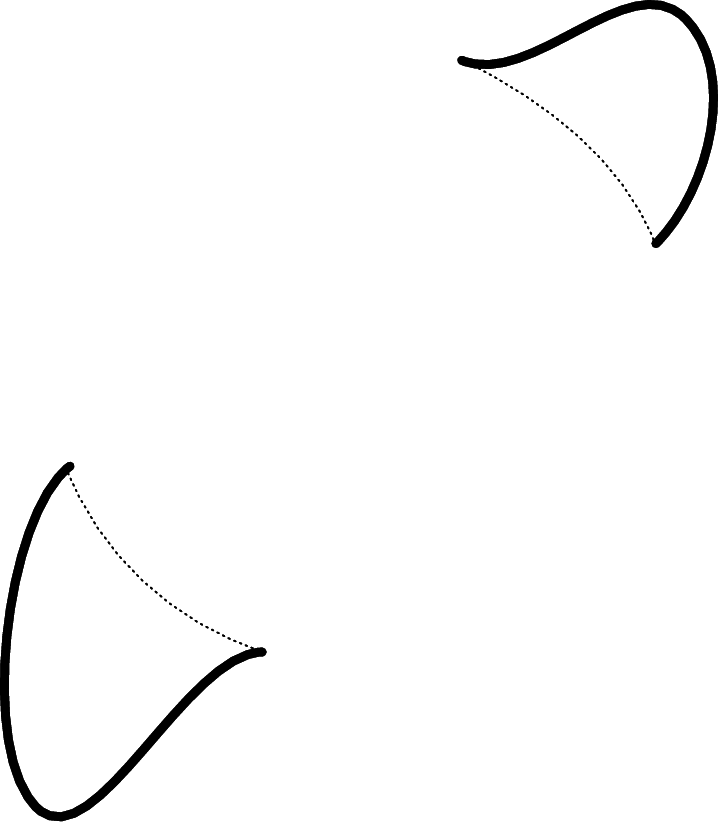}
	\caption{The second term in the formula for the action $\int_{[0,1]^2} \widehat{\tau}^* \omega$ is the total area of the two regions shown.}
	\label{fig:second_term_of_action}
\end{figure}

The second term of the action functional comes with a negative sign, so one needs to think carefully about orientations in order to make sense of it.  Once that thinking has been undertaken and successfully completed, the conclusion is that the action $\cA_{\theta H_t}(\tau_Q)$ is the (positive) area of the `double ice-cream cone' regions shown in Figure \ref{fig:doubleicecream}.

\begin{figure}
	\labellist
	\endlabellist
	\centering
	\includegraphics[scale=0.3]{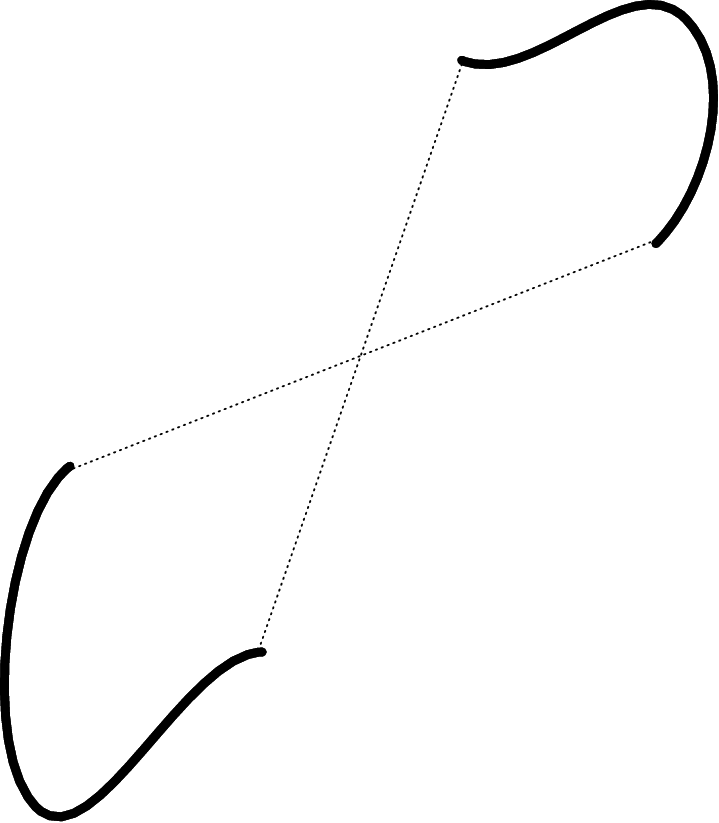}
	\caption{The action $\cA(\tau_Q)$ corresponding to the inscribed rectangle $Q$ is the area of the  (positive) region shown.}
	\label{fig:doubleicecream}
\end{figure}

\section{Spectral Invariants.}
\label{sec:spectral_invariants}
A useful reference for the general discussion in this section is provided by \cite{leczap}.

We are interested in studying the \emph{spectral invariants} of the group $\JF(\gamma, \theta)$ for pairs consisting of a real analytic Jordan curve $\gamma$ and aspect angle $0 < \theta < \pi$.  Spectral invariants are, in our case, real numbers associated to a choice of Jordan Floer homology class.  They arise since the Jordan Floer differential (and the Floer differential in general) respects the action filtration, allowing one to associate to a non-zero homology class the largest action among cycle representatives of that class.

More precisely, the action $\cA_{\theta H_t + h_t}$ gives a map
\[ \cA_{\theta H_t + h_t} \colon \cG(\gamma, \theta, h_t) \longrightarrow \bR \]
on the set of generators of the chain group of an admissible triple $(\gamma, \theta, h_t)$.  We then filter this chain group $\JFC(\gamma, \theta, h_t)$ by subgroups $\JFC^a(\gamma, \theta, h_t)$ generated by all elements of $\cG(\gamma, \theta, h_t)$ whose action is less than $a$.
Choosing an admissible $J_t \in \jreg(\gamma, \theta, h_t)$ gives a chain map $\partial = \partial_{(\gamma, \theta, h_t, J_t)}$ and this respects the filtration.

Then each non-zero homology class $g \in \JF(\gamma, \theta, h_t, J_t)$ has a filtration grading in the induced filtration on $\JF$, and this is called the spectral invariant of $g$:
\[ \ell(\gamma, \theta, h_t, J_t, g) \in \bR {\rm .} \]
Since we have continuation maps establishing independence of the chain homotopy class on almost-complex structures, and those continuation maps are filtered of filtration degree arbitrarily close to zero, we can remove the dependence of $\ell$ on $J_t$, so we obtain $\ell(\gamma, \theta, h_t, g)$.  Furthermore, spectral invariants satisfy a continuity property elaborated below (see \emph{continuity} result of \cite[Theorem 36]{leczap}).  Taking $h_t \to 0$,
we arrive at a spectral invariant
\[  \ell(\gamma, \theta, g) \in \bR {\rm ,} \]
which is the action of some trajectory $\tau$ of the Hamiltonian vector field $X_{\theta H_t}$ (see \emph{spectrality} result of  \cite[Theorem 36]{leczap}).  Such trajectories correspond to inscribed rectangles $Q_\tau$ in $\gamma$ of aspect angle $\theta$.

Spectral invariants satisfy several important properties, and they will be used to establish \Cref{thm:chunky}.
Recall that $\JF(\gamma, \theta) = \langle \alpha, \beta \rangle$, where $\alpha, \beta$ are homogeneous of Maslov degree $\vert \alpha \vert = 1$, $\vert \beta \vert = 2$.
We note without proof that the spectral invariants $\ell(\gamma, \theta, \alpha)$ and $\ell(\gamma, \theta, \beta)$ satisfy a duality property:
\[ \ell(\gamma, \theta, \alpha) + \ell(\gamma, \pi - \theta, \beta) = \area(\gamma) {\rm .} \]
We shall not need to invoke this duality in what follows, but it justifies restricting our attention to
\[ \ell(\gamma, \theta) := \ell(\gamma, \theta, \beta) \]
in order to simplify notation in the rest of this paper.
The duality property is proven in the sequel \cite{greenelobb4}.
Extend the domain of the spectral invariant to $[0,\pi]$ by setting $\ell(\gamma,0) = 0$ and $\ell(\gamma,\pi) = \area(\gamma)$.

\begin{prop}
	[Properties of spectral invariants]
	\label{prop:properties_of_spectral_invariants}
	Let $\gamma$ be a real analytic Jordan curve.
	Then the spectral invariant
	\[ \ell(\gamma, \cdot) \colon [0,\pi] \longrightarrow \bR \colon \theta \longmapsto \ell(\gamma, \theta) \]
	satisfies the following properties:
	\begin{itemize}
		\item \emph{Spectrality.}  The value $\ell(\gamma, \theta)$ is equal to the action of an inscribed $\theta$-rectangle of $\gamma$.
		\item \emph{Monotonicity.}  The function $\ell(\gamma,\cdot)\colon [0,\pi] \longrightarrow \bR$ is monotonic increasing.
		\item \emph{Area bound.} $0 \leq \ell(\gamma, \theta) \leq \area(\gamma)$.
		\item \emph{Lipschitz bound.}  The function $\ell(\gamma,\cdot) \colon [0,\pi] \longrightarrow \bR$ is Lipschitz continuous with Lipschitz constant $\rad(\gamma)^2$.
	\end{itemize}
\end{prop}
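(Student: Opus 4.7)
The plan is to establish the four properties in order, leveraging the action computation from Section~\ref{sec:actions_and_rectangles} together with standard features of Hamiltonian-Floer-theoretic spectral invariants. The first three are essentially formal; the Lipschitz bound is the only substantive step.

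\emph{Spectrality} follows from the construction. For an admissible triple $(\gamma,\theta,h_t)$, the set $\cG(\gamma,\theta,h_t)$ is finite, so the action filtration on $\JFC$ is discrete. Hence $\ell(\gamma,\theta,h_t,\beta)$ equals the action of some generator $\tau_{h_t}$ representing $\beta$. Passing to the limit $h_t \to 0$, continuity of spectral invariants together with compactness of $\gamma \times \gamma$ and Arzel\`a-Ascoli extracts a limiting nonconstant trajectory $\tau$ of $X_{\theta H_t}$ with $\cA_{\theta H_t}(\tau) = \ell(\gamma,\theta)$; this trajectory corresponds to an inscribed $\theta$-rectangle $Q_\tau$.

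\emph{Monotonicity} follows from the standard comparison principle. If $\theta_1 < \theta_2$, choose an increasing homotopy $s \mapsto \theta(s)H_t$ interpolating the two Hamiltonians; because $H \ge 0$, this family is pointwise non-decreasing in $s$. The energy estimate for continuation strips then forces the induced continuation map on $\JFC$ to respect (rather than shift upward) the action filtration, whence $\ell(\gamma,\theta_1) \le \ell(\gamma,\theta_2)$. The \emph{area bound} follows from spectrality together with the area interpretation from Section~\ref{sec:actions_and_rectangles}: the action $\cA_{\theta H_t}(\tau_Q)$ is the area of a ``double ice-cream cone'' region contained in the disc bounded by $\gamma$, and hence lies in $[0,\area(\gamma)]$. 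The endpoint values $\ell(\gamma,0)=0$ and $\ell(\gamma,\pi)=\area(\gamma)$ hold by definition.

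The main effort is the \emph{Lipschitz bound}. Since $\ell(\gamma,\cdot)$ is monotone, it is differentiable almost everywhere, and it suffices to bound $d\ell/d\theta$ by $\rad(\gamma)^2$ at each point of differentiability. The heuristic is the standard Hofer-type derivative formula: at such a point,
\[
\frac{d\ell}{d\theta}(\gamma,\theta) \;=\; \int_0^1 \frac{\partial}{\partial\theta}\bigl(\theta H_t\bigr)\circ\tau(t)\,dt \;=\; \int_0^1 H_t \circ \tau(t)\,dt,
\]
for some trajectory $\tau$ of $X_{\theta H_t}$ realizing $\ell(\gamma,\theta)$. Because $\tau(t) = R_{\theta\int_0^t \beta(r)\,dr}(\tau(0))$ and $R_\alpha$ preserves $H$, and because $\int_0^1\beta(t)\,dt = 1$, the integral evaluates to $H(\tau(0)) = \tfrac14|z-w|^2 \le \rad(\gamma)^2$ for $(z,w)=\tau(0)\in\gamma\times\gamma$. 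The obstacle is justifying the derivative formula in the diagonal-avoiding setting; as the authors flagged, continuation maps are not a reliable tool here, so I would instead use the \emph{bifurcation method} borrowed from \cite{greenelobb4}. Concretely, take a generic one-parameter family $(\gamma,\theta(s),h_t^s,J_t^s)$ of admissible quadruples interpolating two nearby choices. Away from a finite bifurcation set the generators and their actions vary smoothly with $s$, and the $\theta$-derivative of the action of the cycle representative attaining $\ell$ is exactly the integral above. At each bifurcation, one verifies using the arguments of \Cref{lem:cont_maps_no_touching} and \Cref{lem:cont_diagonal_breaking} that diagonal-avoiding strips persist, so the Jordan Floer class $\beta$ remains represented by a generator whose action jumps by at most a controlled amount; summing over the bifurcation set yields the a.e.\ derivative estimate and hence the Lipschitz bound.
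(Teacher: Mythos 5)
Your outline for spectrality and monotonicity is essentially correct and parallel to the paper's. The paper itself compresses almost all four properties into citations of the general spectral-invariant machinery (continuity, spectrality, monotonicity, Lagrangian control) from \cite[Theorem 3 and Theorem 36]{leczap}, so it is much terser than your proposal, but the ideas are the same: discreteness of the action spectrum plus Arzel\`a--Ascoli for spectrality, and a non-decreasing family of Hamiltonians for monotonicity.

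For the \emph{area bound}, however, there is a genuine gap. You argue that $\cA_{\theta H_t}(\tau_Q)$ is ``the area of a double ice-cream cone region contained in the disc bounded by $\gamma$, and hence lies in $[0,\area(\gamma)]$.'' But Section~\ref{sec:actions_and_rectangles} derives the double ice-cream cone interpretation only for \emph{elegantly} inscribed rectangles, and even then it is not automatic that the region is contained in the disc bounded by $\gamma$ (the circumcircle arcs may bulge outside $\gamma$). For a general $\theta$ the inscription realizing $\ell(\gamma,\theta)$ need not be elegant, so your argument does not cover it. The paper's route is more careful: it verifies the elegant description and the bound only for $\theta$ close to $0$ and to $\pi$ (where the realizing inscriptions are guaranteed to be elegant and contained in a small neighborhood of a binormal), and then promotes the bound to all $\theta\in(0,\pi)$ by monotonicity. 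You need that intermediate step.

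For the \emph{Lipschitz bound}, your derivative computation is correct once justified, and the key input --- $H$ is preserved by $R_\alpha$ and bounded above on $\gamma\times\gamma$ by $\rad(\gamma)^2$ --- is exactly what the paper uses. But two remarks. First, you misread the paper's caveat: the difficulty with continuation maps that the authors flag concerns varying the \emph{curve} $\gamma$, not the angle $\theta$. Section~\ref{subsec:continuation} constructs and verifies the continuation package for varying $\theta$, so continuation is a perfectly reliable tool here, and invoking the bifurcation method is not required. Second, the paper does not actually compute a derivative at all; it gets the Lipschitz bound directly from the standard ``Lagrangian control'' estimate $|\ell(H)-\ell(K)| \le \int_0^1 \max_{L_0}|H_t - K_t|\,dt$ (cited as part of \cite[Theorem 3]{leczap}), which applied to $H_t=\theta_1\beta(t)H$, $K_t=\theta_2\beta(t)H$ gives $|\ell(\gamma,\theta_1)-\ell(\gamma,\theta_2)| \le |\theta_1-\theta_2|\,\rad(\gamma)^2$ immediately. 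Your approach via a.e.\ differentiability and a Hofer-type first-variation formula arrives at the same bound, but it is more work than is needed and it leans on machinery (bifurcation analysis) that the paper reserves for the harder problem of varying $\gamma$ in the sequel.
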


\begin{proof}
These properties follow from corresponding properties of the spectral invariants collected in \cite[Theorem 3]{leczap}, which carry over directly to our setting without change:
\begin{itemize}
\item
Spectrality is immediate.
\item
Monotonicity on $(0,\pi)$ follows from monotonicity in \cite[Theorem 3]{leczap}, taking $H = \theta_1 \cdot |z-w|^2/4$ and $K = \theta_2 \cdot |z-w|^2/4$ with $0 < \theta_1 < \theta_2 < \pi$.
\item
The area bound can be inferred from \cite[Section 4]{leczap}.
More concretely, for $\theta$ sufficiently close to 0 or $\pi$, a $\theta$-rectangle inscribed in $\gamma$ is elegant and contained in a small neighborhood of a binormal of $\gamma$.
This implies that it will have double ice-cream area in $(0,\area(\gamma))$ and very close to 0 or to $\area(\gamma)$, respectively.
The area bound then follows for all $\theta$ by monotonicity on $(0,\pi)$; and monotonicity on all of $[0,\pi]$ follows in turn.
\item
The Lipschitz bound follows by combining continuity and Lagrangian control in \cite[Theorem 3]{leczap}, noting that, by definition, $\max \{ \theta \cdot |z-w|^2/4 : (z,w) \in \gamma \times \gamma \} = \theta \cdot \rad(\gamma)^2$.
\end{itemize}
\end{proof}

The following corollary follows immediately:

\begin{cor}
	\label{cor:spectral_interval}
	Let $\gamma$ be a real analytic Jordan curve, and let $0 < \epsilon < \area(\gamma)/2$.
	Then there exists an interval $I \subset (0,\pi)$ of width $|I| \ge (\area(\gamma)-2\epsilon)/\rad(\gamma)^2$ such that for every $\theta \in I$, there exists a $\theta$-rectangle $Q \subset \gamma$ of associated action $\cA(\tau_Q)$ satisfying
	\[
	\epsilon < \cA(\tau_Q) < \area(\gamma)-\epsilon. 
	\]
\end{cor}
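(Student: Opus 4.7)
The plan is to extract the interval $I$ directly from the level sets of the spectral invariant function $\ell(\gamma,\cdot)\colon [0,\pi] \to \bR$, using the four properties collected in \Cref{prop:properties_of_spectral_invariants}. Since $0 < \epsilon < \area(\gamma)/2$, monotonicity, continuity (provided by the Lipschitz bound), and the boundary values $\ell(\gamma,0)=0$, $\ell(\gamma,\pi)=\area(\gamma)$ let me define
\[
\theta_1 := \max\{\theta \in [0,\pi] : \ell(\gamma,\theta) \le \epsilon\}, \qquad \theta_2 := \min\{\theta \in [0,\pi] : \ell(\gamma,\theta) \ge \area(\gamma)-\epsilon\}.
\]
Both extrema are attained by continuity, and $\theta_1 \le \theta_2$ since $\epsilon < \area(\gamma)-\epsilon$. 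Moreover, by extremality and continuity, $\ell(\gamma,\theta_1) = \epsilon$ and $\ell(\gamma,\theta_2) = \area(\gamma)-\epsilon$: for any $\theta$ slightly greater than $\theta_1$ one has $\ell(\gamma,\theta) > \epsilon$, so taking $\theta \to \theta_1^+$ gives $\ell(\gamma,\theta_1) \ge \epsilon$, while by definition $\ell(\gamma,\theta_1) \le \epsilon$; the argument for $\theta_2$ is symmetric.

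Next I would set $I := (\theta_1, \theta_2)$. Because $\ell(\gamma,0) = 0 < \epsilon$ and $\ell(\gamma,\pi) = \area(\gamma) > \area(\gamma)-\epsilon$, the interval $I$ is contained in $(0,\pi)$. For every $\theta \in I$, the definitions of $\theta_1, \theta_2$ together with monotonicity give $\ell(\gamma,\theta) \in (\epsilon, \area(\gamma)-\epsilon)$. The spectrality property then produces, for each such $\theta$, an inscribed $\theta$-rectangle $Q \subset \gamma$ whose associated trajectory $\tau_Q$ realizes the spectral invariant, so that
\[
\epsilon < \cA(\tau_Q) = \ell(\gamma,\theta) < \area(\gamma) - \epsilon,
\]
as required.

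Finally, to lower-bound the length of $I$, I would invoke the Lipschitz bound with constant $\rad(\gamma)^2$:
\[
\area(\gamma) - 2\epsilon \;=\; \ell(\gamma,\theta_2) - \ell(\gamma,\theta_1) \;\le\; \rad(\gamma)^2 \,(\theta_2 - \theta_1) \;=\; \rad(\gamma)^2\, |I|,
\]
which yields $|I| \ge (\area(\gamma)-2\epsilon)/\rad(\gamma)^2$.

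I expect no real obstacle: all of the substantive content has already been absorbed into \Cref{prop:properties_of_spectral_invariants}, and the corollary is a routine synthesis of spectrality, monotonicity, and the Lipschitz estimate, occupying only a few lines in the paper.
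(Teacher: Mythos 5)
Your proof is correct and follows essentially the same route as the paper: extract $I$ from the level sets of $\ell(\gamma,\cdot)$ using spectrality, monotonicity, the boundary values at $0$ and $\pi$, and the Lipschitz bound. Your choice of endpoints (the \emph{inner} boundary of the level sets, $\theta_1 = \max\{\theta : \ell \le \epsilon\}$ and $\theta_2 = \min\{\theta : \ell \ge \area(\gamma)-\epsilon\}$) is in fact a touch more careful than the paper's ($a = \inf\{\theta : \ell \ge \epsilon\}$, $b = \sup\{\theta : \ell \le \area(\gamma)-\epsilon\}$), since yours yields the strict inequalities $\epsilon < \ell(\gamma,\theta) < \area(\gamma)-\epsilon$ on $I$ from weak monotonicity alone, without implicitly requiring $\ell$ to be strictly increasing just inside the endpoints.
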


\begin{proof}
Take $a = \inf \{ \theta: \ell(\gamma,\theta) \ge \epsilon \}$, $b = \sup \{ \theta: \ell(\gamma,\theta) \le \area(\gamma)-\epsilon\}$, and $I = (a,b)$.
Certainly $a$ and $b$ are well-defined, since $\ell(\gamma,\pi) = \area(\gamma) > \epsilon$ and $\ell(\gamma,0) = 0 < \epsilon$.
By monotonicity, $\epsilon < \ell(\gamma,\theta) < \area(\gamma)-\epsilon$ for all $\theta \in I$.
By the Lipschitz bound, $\area(\gamma)-2\epsilon = \ell(\gamma,b) - \ell(\gamma,a) \le \rad(\gamma)^2 \cdot (b-a)$, so $|I| \ge (\area(\gamma)-2\epsilon)/\rad(\gamma)^2$.
The result now follows by spectrality.
\end{proof}

\section{Inscriptions.}
\label{sec:inscription_results}
We now turn to the question of how bounds on spectral invariants may be used to show that rectifiable curves inscribe non-degenerate $\theta$-rectangles.

The core of the argument is the following lemma:

\begin{lem}
[No shrinkout]
	\label{lem:approximating_rectifiable}
	Suppose that $\gamma_n$ is a sequence of parametrized real analytic Jordan curves of bounded length, which converge in $C^0$ to a Jordan curve $\gamma$, with $\area(\gamma_n) = \area(\gamma)$ for all $n$.
	Suppose further that each $\gamma_n$ inscribes a non-degenerate $\theta$-rectangle $Q_n \subset \gamma_n$ of associated action $\cA(\tau_{Q_n})$ satisfying
\[
\epsilon < \cA(\tau_{Q_n}) < \area(\gamma) - \epsilon
\]
for some $\epsilon > 0$.
Then $\gamma$ inscribes a non-degenerate $\theta$-rectangle $Q$.
\end{lem}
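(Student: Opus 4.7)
The plan is a compactness/rigidity argument in which the action bounds preclude shrinkout.

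First I would extract a convergent subsequence of rectangles. Because the $\gamma_n$ are of uniformly bounded length and converge in $C^0$ to $\gamma$, their images lie in a common compact region of $\bC$. Parametrizing each $Q_n$ by the four parameter values of its vertices on the domain circle of $\gamma_n$ (labelled cyclically), I pass to a subsequence along which these parameter quadruples converge, and hence, by $C^0$-convergence of $\gamma_n \to \gamma$, so do the vertex quadruples to a configuration $Q = \{r_1, r_2, r_3, r_4\} \subset \gamma$. Since the condition of being a $\theta$-rectangle with prescribed cyclic labelling is closed, $Q$ is a (possibly degenerate) $\theta$-rectangle inscribed in $\gamma$. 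For $\theta \in (0, \pi)$ fixed the shape of a $\theta$-rectangle is rigid up to similarity, so the only available degeneration is complete shrinkout: all four $r_i$ coincide at a single point $p \in \gamma$ and $\rad(Q_n) \to 0$. If shrinkout does not occur then $Q$ is itself the desired non-degenerate inscribed $\theta$-rectangle.

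It remains to rule out shrinkout, and this is where the action hypotheses enter. From \Cref{sec:actions_and_rectangles},
\[
\cA(\tau_{Q_n}) = \theta \, \rad(Q_n)^2 - \int_{[0,1]^2} \widehat{\tau_{Q_n}}^{\,*} \omega,
\]
so the first summand tends to $0$ when $\rad(Q_n) \to 0$. For the second summand, my plan is the following: once $Q_n$ is sufficiently small all four of its vertices lie in a short subarc of $\gamma_n$ near $p$, and I realize an explicit representative of the preferred (diagonal-avoiding) capping as a product of two small topological disks in $\bC$, each of area $O(\rad(Q_n)^2)$ --- exactly as in the elegant-inscription construction of \Cref{sec:actions_and_rectangles}. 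Combining these estimates gives $\cA(\tau_{Q_n}) \to 0$, contradicting $\cA(\tau_{Q_n}) > \epsilon$. The upper bound $\cA(\tau_{Q_n}) < \area(\gamma) - \epsilon$ plays an analogous role, excluding a dual degeneration in which the limit capping acquires an extra global wrapping around $\gamma$ (and so picks up a further $\area(\gamma)$) as $Q_n$ collapses.

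The hard part will be the local capping estimate. Although the preferred capping of $\tau_{Q_n}$ is only characterized by a topological condition (it avoids $\Delta(\bC)$), the area estimate requires a concrete small representative in this homotopy class. The key check is that for rectangles tightly clustered around $p$, the natural short-arc completion on $\gamma_n \times \gamma_n \setminus \Delta(\gamma_n)$ has the winding number around $\Delta(\bC)$ prescribed by \Cref{lem:capping}. Once this is verified, the two local disks fill the capping in and their symplectic areas yield the required quadratic decay, completing the contradiction and forcing the limit $Q$ to be a non-degenerate $\theta$-rectangle inscribed in $\gamma$.
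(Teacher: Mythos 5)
Your overall strategy coincides with the paper's: pass to a convergent subsequence, observe the only degeneration is shrinkout, and show shrinkout forces $\cA(\tau_{Q_n})$ toward $0$, contradicting the action bound. The first-term estimate $\theta\,\rad(Q_n)^2 \to 0$ is identical. But two aspects of the second-term estimate have real gaps.

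First, the claim that the two disks each have area $O(\rad(Q_n)^2)$ is not justified and is generally false. The disks are cobounded by short circular arcs of $\tau_{Q_n}$ together with arcs of $\gamma_n$ joining consecutive vertices, and those $\gamma_n$-arcs, while shrinking to $p$ in the Hausdorff metric, can be very long and wiggly. They lie in balls $B_n$ with radius $\rho_n \to 0$, but $\rho_n$ need not be $O(\rad(Q_n))$, and a long curve inside $B_n$ can bound (signed) area far exceeding $\rad(Q_n)^2$. The correct estimate, which the paper spells out, uses the uniform length bound $L$ on the $\gamma_n$: a closed curve of length at most $2L+1$ contained in $B_n$ bounds signed area at most of order $\rho_n L$, which tends to $0$. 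You mention the length bound only as a compactness device; it is in fact indispensable precisely here, and the remark after the lemma in the paper (omitting it, shrinkout may happen at a non-rectifiability point) underscores that your stated estimate cannot survive without it.

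Second, you correctly flag the winding-number verification (that the short-arc completion of $\tau_{Q_n}$ on $\gamma_n\times\gamma_n\setminus\Delta(\gamma_n)$ is the preferred capping class of \Cref{lem:capping}) as the key open check, but you don't resolve it, and your heuristic about a ``dual degeneration'' picking up a further $\area(\gamma)$ is vague. The paper sidesteps the verification entirely by computing $\cA(\tau_{Q_n})$ modulo $\area(\gamma)\bZ$: changing the path $P_n$ by a wind around the core curve of $\gamma_n\times\gamma_n\setminus\Delta(\gamma_n)$ shifts the capping integral by $2\area(\gamma)$, hence by $0$ mod $\area(\gamma)$, so any choice of path with injective projections suffices. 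The two-sided hypothesis $\epsilon < \cA(\tau_{Q_n}) < \area(\gamma)-\epsilon$ is exactly designed so that $\cA(\tau_{Q_n}) \to 0$ in $\bR/\area(\gamma)\bZ$ already gives the contradiction, without ever pinning down the true winding number. Absorbing the winding ambiguity into a mod-$\area(\gamma)$ computation is the clean move your proposal is missing; without it, you need a nontrivial topological lemma that you have not supplied.
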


If we omit the hypothesis that the $\gamma_n$ have bounded length, and $\gamma$ does {\em not} inscribe a non-degenerate $\theta$-rectangle, then the proof of \Cref{lem:approximating_rectifiable} shows that every convergent subsequence of the $Q_n$ shrinks out to a point of non-rectifiability of $\gamma$.

\begin{proof}
	First, by passing to a subsequence if necessary, we use compactness to find a (possibly degenerate) rectangle $Q$ such that $Q_n \rightarrow Q$.  Assume for a contradiction that $Q$ is degenerate.  The contradiction we derive shall take the form of showing that $\cA(\tau_{Q_n}) \rightarrow 0 \in \bR/\area(\gamma)\bZ$ as $n \rightarrow \infty$.
	
	Each $\gamma_n \setminus Q_n$ consists of four components $\gamma_n^1, \gamma_n^2, \gamma_n^3, \gamma_n^4$.  Since $\gamma$ is a Jordan curve (and by possibly reordering) we may assume that $\gamma_n^2, \gamma_n^3, \gamma_n^4 \rightarrow Q$ in the Hausdorff metric
	while $\gamma_n^1 \rightarrow \gamma$.  For convenience, we choose shrinking discs $B_n \subset \bC$, centred at $Q$, such that $B_n \rightarrow Q$ and $\gamma_n \setminus \gamma_n^1 \subset B_n$.
	Now we consider the action \[
	\cA_{\theta H_t}(\tau_{Q_n}) = \int_0^1 \theta  H_t \circ \tau_{Q_n}(t) dt - \int_{\widehat{\tau_{Q_n}}} \omega {\rm .}
	\]
	There are two terms to this action.  The first term satisfies
	\[ \int_0^1 \theta H_t \circ \tau_{Q_n}(t) dt = \theta \rad(Q_n)^2 \longrightarrow 0.\]
	
	To compute the second term we would like to find an admissible cap $\widehat{\tau_{Q_n}}$ for the trajectory $\tau_{Q_n}$.  In particular we are interested in the boundary of this cap which will be a loop $\tau_{Q_n} \cup P_n$ where $P_n$ is a path on $\gamma_n \times \gamma_n$ connecting the two endpoints of $\tau_{Q_n}$.
	
	We start by choosing a path $P_n$ on $(\gamma_n \times \gamma_n) \setminus \Delta(\gamma_n)$ such that $\pi_1 \circ P_n$ and $\pi_2 \circ P_n$ are both injective paths on $\gamma_n$.  Note that we can ensure that we avoid the diagonal $\Delta(\gamma_n)$ by choosing $P_n$ so that $\pi_1 \circ P_n$ and $\pi_2 \circ P_n$ travel in the same direction around $\gamma_n$.
	
	Our first candidate for the boundary of $\widehat{\tau_{Q_n}}$ is $\tau_{Q_n} \cup P_n$.  The problem is that this cap may have non-zero winding number around the diagonal $\Delta(\bC)$.  To rectify this, we change $P_n$ by winding a number of times around the core curve of $\gamma_n \times \gamma_n \setminus \Delta({\gamma_n})$, which has winding number $1$ around the diagonal $\Delta(\bC)$.  This results in the curve $P_n'$.
	
	Now, computing inside $\bR/\area(\gamma)\bZ$, we have
	\begin{align*}
	\int_{\widehat{\tau_{Q_n}}} \omega &= \int_{\tau_{Q_n} \cup P_n'} \eta = \int_{\tau_{R_n} \cup P_n} \eta
	\end{align*}
	(where $d \eta = \omega$)
	since a core curve of $\gamma_n \times \gamma_n \setminus \Delta(\gamma_n)$ bounds area $2 \area(\gamma)$.
	
	On the other hand, we have
	\[ \int_{\tau_{Q_n} \cup P_n} \eta =  \int_{\pi_1(\tau_{Q_n} \cup P_n)} xdy + \int_{\pi_2(\tau_{Q_n} \cup P_n)} xdy\]
	which is nothing more than the sum of the areas of the two regions bounded by the (possibly self-intersecting) closed curves $\pi_1(\tau_{Q_n} \cup P_n)$ and $\pi_2(\tau_{Q_n} \cup P_n)$.
	
	Now for $i = 1,2$, $\pi_i(\tau_{Q_n} \cup P_n)$ is a curve that is the union of a small circular arc contained in $B_n$, and the closures of a subset of the arcs $\gamma_n^1, \gamma_n^2, \gamma_n^3, \gamma_n^4$.  At the expense of possibly changing the area by $\pm \area(\gamma)$ by adding on $\pm \gamma_n$, this is the area enclosed by a curve of length bounded by $2L+1$ (for large enough $n$) entirely contained in $B_n$, where $L$ denotes a uniform upper bound on the length of the $\gamma_n$.  But the area enclosed by such a curve contained in $B_n$ can be at most
	\[ \frac{2L+1}{{\rm Circumference}(B_n)} {\rm Area}(B_n) \]
	and this tends to $0$ as $n \rightarrow \infty$.
	
	Thus we see that as $n \rightarrow \infty$, the second term of the action tends to $0 \in \bR / \area(\gamma) \bZ$, and so we have arrived at a contradiction.
\end{proof}

\begin{proof}
[Proof of \Cref{thm:chunky}]
Suppose that $\gamma$ is a rectifiable Jordan curve.
The Riesz-Privalov theorem implies that $\gamma$ can be approximated in $C^0$ by a sequence of real analytic Jordan curves $\gamma_n$ of bounded length \cite[Theorem 6.8]{pommerenke}.
By rescaling these curves, we may assume additionally that $\area(\gamma_n) = \area(\gamma)$ for all $n$.
Now apply \Cref{cor:spectral_interval} and \Cref{lem:approximating_rectifiable}, taking $\epsilon \to 0$, in order to obtain the desired result for $\gamma$.
\end{proof}

\newpage

\bibliographystyle{amsplain}
\bibliography{References./works-cited.bib}
\end{document}